\theoremstyle{plain} %
    \newtheorem{theorem}{Theorem}[section]
    \newtheorem*{theorem*}{Theorem}
    \newtheorem{proposition}{Proposition}[section]
    \newtheorem*{proposition*}{Proposition}
    \newtheorem{corollary}{Corollary}[section]
    \newtheorem*{corollary*}{Corollary}
    \newtheorem{lemma}{Lemma}[section]
    \newtheorem*{lemma*}{Lemma}
    \newtheorem{conjecture}{Conjecture}[section]
    \newtheorem*{conjecture*}{Conjecture}
\theoremstyle{definition} %
    \newtheorem{definition}{Definition}[section]
    \newtheorem*{definition*}{Definition}
\theoremstyle{remark} %
    \newtheorem{remark}{Remark}[section]
    \newtheorem*{remark*}{Remark}
	\newtheorem{example}{Example}[section]
	\newtheorem*{example*}{Example}
    \newcommand{\addQEDstyle}[2]{\AtBeginEnvironment{#1}{\pushQED{\qed}\renewcommand{\qedsymbol}{#2}}
    \AtEndEnvironment{#1}{\popQED}} %
	\tikzstyle{vertex}=[circle,fill=black,minimum size=5pt,inner sep=0pt] %
	\newcommand{\vertex}{\node[vertex]}
\apptocmd{\sloppy}{\hbadness 10000\relax}{}{} %
\begin{document}
\title[Local WMCG\MakeLowercase{s} and cabled braids]{Local wild mapping class groups and cabled braids} %

\author[J.~Douçot]{Jean Douçot} %
\address[J.~Douçot]{Department of Mathematics, University of Lisbon, Campo Grande, Edifício C6, PT-1749-016 Lisboa (Portugal)}
\curraddr{`Simion Stoilow' Institute of Mathematics of the Romanian Academy,
	Calea Griviței 21,
	010702-Bucharest, 
	Sector 1, 
	Romania}
\email{jeandoucot@gmail.com}
\thanks{J.~D. was funded by FCi\^encias.ID;
	and now by the PNRR Grant CF 44/14.11.2022, 
	`Cohomological Hall Algebras of Smooth Surfaces and Applications',
	led by O.~Schiffmann.}

\author[G.~Rembado]{Gabriele Rembado}
\address[G.~Rembado]{Hausdorff Centre for Mathematics, University of Bonn, 60 Endenicher Allee, D-53115 Bonn (Germany)}
\curraddr{Institut Montpelliérain Alexander Grothendieck, University of Montpellier, Place Eugène Bataillon, 34090, Montpellier (France)}
\email{gabriele.rembado@umontpellier.fr}
\thanks{G.~R. was supported by the Deutsche Forschungsgemeinschaft (DFG, German Research Foundation) under Germany’s Excellence Strategy - GZ 2047/1, Projekt-ID 390685813; 
and now by the European Commission under the grant agreement n.~101108575 (HORIZON-MSCA project~\href{https://cordis.europa.eu/project/id/101108575}{QuantMod}).}

\author[M.~Tamiozzo]{Matteo Tamiozzo}
\address[M.~Tamiozzo]{Department of Mathematics, University of Warwick, Zeeman Building, CV47AL Coventry (UK)}
\curraddr{Université Sorbonne Paris Nord, Institut Galilée, 99 Av. Jean Baptiste Clément, 93430 Villetaneuse (France)}
\email{tamiozzo@math.univ-paris13.fr}
\thanks{M.~T. was funded by ERC grant 804176}

\subjclass[2020]{14D22,14F35,17B22,32S22}

\keywords{Braid groups, braid group operads, isomonodromic deformations, root systems, reflection groups, hyperplane arrangements,fundamental groups,moduli spaces}

\begin{abstract}
	We define and study some generalisations of pure $\mf g$-braid groups, for any complex reductive Lie algebra $\mf g$.
	They naturally occur in the theory of isomonodromic deformations for meromorphic connections with irregular singularities on principal bundles over Riemann surfaces, covering the general untwisted case, going beyond the case of generic irregular types.
	These generalised braid groups make up local pieces of the wild mapping class groups, which in turn extend the usual mapping class groups and govern the braiding of Stokes data.

	We establish a general product decomposition for these local wild mapping class groups, and in all classical cases define a fission tree governing the decomposition; in particular in type $D$ we will find a factor which is not isomorphic to any pure braid group coming from a root system.
	In type $A$, the fission tree and the pure braid group operad yields a proof of the corresponding ``multi-scale'' braiding conjecture.
\end{abstract}

{\let\newpage\relax\maketitle} %

\setcounter{tocdepth}{1}  %
\tableofcontents

\section{Introduction}

\renewcommand{\thetheorem}{\arabic{theorem}} %

\subsection{General aim} In this article, we initiate a series of works with the aim of systematically studying the topology of the spaces of times for isomonodromic deformations of meromorphic connections with irregular singularities of any irregular type (encompassing in particular all the Painlevé equations).

This is motivated by the fact that the much-studied actions of mapping class groups on character varieties (also) arise from isomonodromic deformations of meromorphic connections with \textit{regular} singularities (i.e., simple poles):
and there is an analogous theory in the setting of meromorphic connections with \textit{irregular} singularities (i.e., higher-order poles), 
which leads to wild mapping class group actions on wild character varieties~\cite{boalch_2014_geometry_and_braiding_of_stokes_data_fission_and_wild_character_varieties,boalch_yamakawa_2015_twisted_wild_character_varieties}. 
Our main aim is to describe as explicitly as possible,
and intrinsically, 
the wild mapping class groups obtained in this way;
and later their actions.

As a first step, here we consider \textit{pure local} wild mapping class groups---defined in \S~\ref{sec:local_WMCG}---for \emph{untwisted} irregular types. They turn out to yield generalisations of pure $\mf g$-braid groups, and in many cases are controlled by the data of a \emph{fission tree} which encodes (the admissible deformations of) the irregular type.

In this introduction, we will first explain the context and motivation of our work; we will then describe the structure of this paper and state our main results. Finally, we will illustrate some of these results in a concrete example.

\subsection{Nonlinear monodromy actions in 2d gauge theory}

Let $\Sigma$ be a compact Riemann surface, $\bm a \sse \Sigma$ a finite subset with complement $\Sigma^{\circ} \ceqq \Sigma \setminus \bm a$ and $G$ a complex reductive group (for instance, $\GL_n(\mb C)$).
Much is known about the (tame) character variety
\begin{equation}
	\label{eq:tame_betti}
	\mc M_{\on B}(\Sigma,\bm a) \ceqq \Hom \bigl( \pi_1(\Sigma^{\circ}),G \bigr) \bs\!\!\bs G,
\end{equation}
parametrising $G$-local systems on $\Sigma^o$.
The Riemann--Hilbert correspondence~\cite{deligne_1970_equations_differentielles_a_points_singuliers_reguliers} (cf.~\cite{katz_1976_an_overview_of_deligne_s_work_on_hilbert_s_twenty_first_problem}), sending a meromorphic connection on a principal $G$-bundle over $\Sigma$ with regular singularities at $\bm a \sse \Sigma$ to its monodromy representation, is a transcendental map relating the \textit{de Rham} space $\mc M_{\dR}(\Sigma,\bm a)$ (parametrising such connections) with the \textit{Betti} space $\mc M_{\on B}(\Sigma,\bm a)$. 
The character variety~\eqref{eq:tame_betti} is an algebraic Poisson variety, and the mapping class group of $(\Sigma,\bm a)$ acts on it by algebraic Poisson automorphisms. In fact, for any smooth family $\ul{\Sigma} \to \bm B$ of Riemann surfaces with marked points the character varieties of the fibres assemble into a fibration endowed with a flat (complete) Ehresmann connection (the nonabelian Gau\ss{}--Manin connection~\cite{simpson_1994_moduli_of_representations_of_the_fundamental_group_of_a_smooth_projective_variety_i}, \cite{simpson_1994_moduli_of_representations_of_the_fundamental_group_of_a_smooth_projective_variety_ii}). Via the monodromy of this nonlinear connection, the fundamental group of the base $\bm B$ hence acts on the character variety~\eqref{eq:tame_betti} of any fibre, by algebraic Poisson automorphisms.\fn{
	Recall that~\cite{hitchin_1997_frobenius_manifolds} proved that the (generic) genus-zero Riemann--Hilbert correspondence is \emph{symplectic},
upon restriction to the symplectic leaves of~\eqref{eq:tame_betti}---%
which involves fixing $G$-conjugacy classes at the marked points.}
\vspace{5pt}

In recent years, several works have extended many features of 2d gauge theory to the case of connections with \emph{irregular/wild} singularities, cf. (among many others)~\cite{jimbo_miwa_ueno_1981_monodromy_preserving_deformation_of_linear_ordinary_differential_equations_with_rational_coefficients_i_general_theory_and_tau_function,
malgrange_1983_sur_les_deformations_isomonodromiques_II_singularites_irregulieres,malgrange_1991_equations_differentielles_a_coefficients_polynomiaux,martinet_ramis_1991_elementary_acceleration_and_multisummability_I,kashiwara_1998_semisimple_holonomic_d_modules,malgrange_2004_deformations_isomonodromiques_formes_de_liouville_fonction_tau}.
This led to the discovery of new algebraic Poisson varieties generalising~\eqref{eq:tame_betti}, the so-called  ``wild'' character varieties~\cite{boalch_2001_symplectic_manifolds_and_isomonodromic_deformations,boalch_2014_geometry_and_braiding_of_stokes_data_fission_and_wild_character_varieties,boalch_yamakawa_2015_twisted_wild_character_varieties},
parameterising refined monodromy data known as Stokes data~\cite{stokes_1857_on_the_discontinuity_of_arbitrary_constants_which_appear_in_divergent_developments,birkhoff_1913_the_generalized_riemann_problem_for_linear_differential_equations_and_allied_problems_for_linear_difference_and_q_difference_equations,
balser_jurkat_lutz_1979_birkhoff_invariants_and_stokes_multipliers_for_meromorphic_linear_differential_equations,sibuya_1990_linear_differential_equations_in_the_complex_domain}---%
etc.;
cf.~\cite{boalch_2021_topology_of_the_stokes_phenomenon} for a modern (topological) view.
Wild character varieties are endowed with algebraic Poisson automorphisms (coming for instance from $\mf g$-\emph{braid} groups~\cite{boalch_2002_g_bundles_isomonodromy_and_quantum_weyl_groups,boalch_2014_geometry_and_braiding_of_stokes_data_fission_and_wild_character_varieties}, the fundamental groups of root-hyperplane complements for the Lie algebra $\mf g = \Lie(G)$\footnote{
	The ``generalised'' braid groups~\cite{brieskorn_1971_die_fundamentalgruppe_des_raumes_der_regulaeren_orbits_einer_endlichen_komplexen_spiegelungsgruppe,brieskorn_1973_sur_les_groupes_de_tresses,deligne_1972_les_immeubles_des_groupes_de_tresses_generalises}, i.e. the Artin(--Tits) groups of type $\mf g$~\cite{tits_1969_le_prebleme_des_mots_dans_les_groupes_de_coxeter,brieskorn_saito_1972_artin_gruppen_und_coxeter_gruppen} (cf. \S~\ref{sec:notions_notations}).}).
As above, such automorphisms arise from the action of the fundamental group of suitable manifolds $\bm B$ carrying families of wild character varieties. A key point, which we will explain in more detail below, is that a ``new braiding'' appears in the wild setting: namely, one has interesting group actions even for manifolds $\bm B$ carrying constant families of pointed Riemann surfaces.

Part of this story was given a quantum field theory interpretation by Witten~\cite{witten_2007_gauge_theory_and_wild_ramification}, and further the symplectic leaves of these Poisson varieties were shown to be new (complete) hyperkähler manifolds in the Sabbah/Biquard--Boalch extension of the nonabelian Hodge correspondence on Riemann surfaces~\cite{sabbah_1999_harmonic_metrics_and_connections_with_irregular_singularities,biquard_boalch_2004_wild_nonabelian_hodge_theory_on_curves}.

\vspace{5pt}

The viewpoint discussed above subsumes many previous examples of nonlinear monodromy actions in 2d gauge theory, including:
\begin{enumerate}
	\item $\bm B = \Conf_m(\mb C)$, the genus-zero tame case, leading to the action of the pure braid group $\PB_m$ on the (tame) character variety. This is the first example to have been investigated, dating back to Hurwitz's work \cite{hurwitz_1891_ueber_riemannische_flaechen_mit_gegebenen_verzweigungpunkten};

	\item $\bm B = \mc M_g$, the nonsingular genus-$g$ case, leading to the action of the mapping class group $\Gamma_g$ of $\Sigma$ (cf.~\cite{birman_1974_braids_links_and_mapping_class_groups} for the relation with braid groups);

	\item and $\bm B = \mf t_{\reg}$, related to irregular connections with a ``generic'' fixed pole of order two in genus zero,\footnote{
		      \emph{Generic} isomonodromic deformations involve regular semisimple leading terms; in type $A$, these are diagonal matrices with distinct entries. See~\cite{balser_jurkat_lutz_1979_birkhoff_invariants_and_stokes_multipliers_for_meromorphic_linear_differential_equations,jimbo_miwa_ueno_1981_monodromy_preserving_deformation_of_linear_ordinary_differential_equations_with_rational_coefficients_i_general_theory_and_tau_function,malgrange_1983_sur_les_deformations_isomonodromiques_II_singularites_irregulieres}, developing the subject started in~\cite{birkhoff_1913_the_generalized_riemann_problem_for_linear_differential_equations_and_allied_problems_for_linear_difference_and_q_difference_equations}, and cf. Ex.~\ref{ex:generic_case}.}
	      where $\mf t_{\reg}$ is the regular part of a Cartan subalgebra $\mf t \sse \mf g$. This leads to the action of the pure $\mf g$-braid group $\PB_{\mf g}$  on $G^*$~\cite{boalch_2002_g_bundles_isomonodromy_and_quantum_weyl_groups} (viz. the semiclassical action of the quantum Weyl group of De Concini--Kac--Procesi~\cite{deconcini_kac_procesi_1992_quantum_coadjoint_action}).
\end{enumerate}

The latter case is a first instance of the new braiding mentioned above, having to do with the structure group of the connections (e.g. if $G = \GL_n(\mb C)$ we can deform the noncoalescing eigenvalues of an $n$-by-$n$ diagonal matrix) and not with the motion of poles on the surface, nor with the deformation of the complex structure of the surface.
In this simplest generic example, one can in fact switch to the classical tame case, with varying points on $\Sigma$, via the Fourier--Laplace transform (a.k.a. Harnad's duality~\cite{harnad_1994_dual_isomonodromic_deformations_and_moment_maps_to_loop_algebras}, cf.~\cite{boalch_2012_simply_laced_isomonodromy_systems}); however, this is not possible in general.

Most of the current paper is concerned with studying fundamental groups of spaces carrying families of wild character varieties related to non-generic connections. Going beyond the generic case leads to more complicated “multi-scale” braiding phenomena, the new braidings of~\cite{boalch_2014_geometry_and_braiding_of_stokes_data_fission_and_wild_character_varieties}. Our goal is to describe very explicitly how these braidings look like, and in particular make precise the natural intuition that they can be viewed in terms of cabling of braids (an idea also mentioned in Ramis’ talk~\cite{ramis_2012_talk_at_the_international_workshop_on_integrability_in_dynamical_systems_and_control}, slides n.~148/196).
This phenomenon is first visible when studying deformations of non-generic connections with poles of order three, as in~\cite{boalch_2012_simply_laced_isomonodromy_systems}, to which we will come back below.

\subsubsection*{Wild Riemann surfaces}

The starting point is Boalch's definition of “wild” Riemann surfaces~\cite[Def.~8.1]{boalch_2014_geometry_and_braiding_of_stokes_data_fission_and_wild_character_varieties}, generalising Riemann surfaces with distinct marked points  (cf. \S~\ref{sec:deformations}). Wild Riemann surfaces are endowed with “irregular types” at the marked points, describing the meromorphic part of a connection. More precisely, recall that locally around a point $a \in \Sigma$ a meromorphic connection is encoded by a $\mf g$-valued meromorphic 1-form $\mc{A}$ on $\Sigma$, where $\mf g$ is the Lie algebra of $G$.
We assume that up to a local gauge transformation and holomorphic terms one has
\begin{equation}
	\label{eq:local_form}
	\mc{A} = \dif Q + \frac{\Lambda}{z} \dif z,
\end{equation}
where $z$ is a local coordinate with $z(a) = 0$, and where
\begin{equation*}
	\Lambda \in \mf g, \qquad Q = \sum_{i = 1}^p A_i z^{-i} \in z^{-1} \mf t [z^{-1}],
\end{equation*}
for a Cartan subalgebra $\mf t \sse \mf g$ and an integer $p \geq 1$.
Then $Q$ is the irregular type at the point $a \in \Sigma$ (and it corresponds to ``very good'' orbits~\cite{boalch_2017_wild_character_varieties_meromorphic_hitchin_systems_and_dynkin_graphs}).
It is worth emphasising here that, throughout this paper, we only consider untwisted/unramified irregular types (their underlying irregular classes, possibly twisted, have now been considered in the subsequent works \cite{doucot_rembado_2023_topology_of_irregular_isomonodromy_times_on_a_fixed_pointed_curve}, \cite{boalch_doucot_rembado_2022_twisted_local_wild_mapping_class_groups_configuration_spaces_fission_trees_and_complex_braids}).

\vspace{5pt}

Furthermore, there is a natural notion of an ``admissible family'' of wild Riemann surfaces (cf. Def. \ref{def:admissible_deformations}), roughly ensuring that the irregular types on each fibre are such that the Stokes matrices have the ``same shape''. Deforming irregular types subject to this admissibility condition, one obtains spaces whose fundamental groups are responsible for the genuinely new actions on wild character varieties. Indeed, the result~\cite[Thm.~10.2]{boalch_2014_geometry_and_braiding_of_stokes_data_fission_and_wild_character_varieties} (extending~\cite{boalch_2001_symplectic_manifolds_and_isomonodromic_deformations,boalch_2002_g_bundles_isomonodromy_and_quantum_weyl_groups,boalch_2007_quasi_hamiltonian_geometry_of_meromorphic_connections} from the generic case) shows that any admissible family of wild Riemann surfaces over a base space $\bm B$ determines a (nonlinear) fibre bundle $\ul{\mc M}_{\on{B}} \to \bm B$ of Poisson wild character varieties, equipped with a complete flat Ehresmann connection (the \emph{wild} nonabelian Gauß--Manin connection~\cite{boalch_2001_symplectic_manifolds_and_isomonodromic_deformations}, in the wild analogue of the ``symplectic nature'' of $\pi_1(\Sigma^{\circ})$~\cite{goldman_1984_the_symplectic_nature_of_fundamental_groups_of_surfaces}, cf.~\cite{deligne_malgrange_ramis_2007_singularites_irregulieres}).
Finally,
just as in the tame case,
the fundamental group $\pi_1(\bm B)$ acts by algebraic Poisson automorphisms on any fibre.

\subsubsection*{Goal of the paper}

The discussion in the previous paragraph involves the choice of an admissible family of wild Riemann surfaces over a base $\bm B$. If one wants to focus on the new phenomena arising from the variation of irregular types, a natural choice consists in taking a constant family of pointed Riemann surfaces, and deforming only the irregular types at the marked points. It turns out that such admissible families are parameterised by a universal deformation space (cf. \S~\ref{sec:deformations}), whose fundamental group we call a \emph{local wild} mapping class group. The main aim of this paper is to give a description of local wild mapping class groups which on the one hand works for arbitrary reductive groups (and for deformations of non-generic irregular types) and on the other hand is concrete and explicit enough to be useful for studying the action on wild character varieties (cf.~Ex.~\ref{ex:braiding_stokes_data}). In particular, we will study the aforementioned ``braiding of braids'', cf. Thm.~\ref{thm: 4-intro} and the discussion preceding it.

\subsection{Main results and layout of the paper}

Let us now outline the structure and main results of this paper (we refer the reader to the body of the document for more precise statements). Let $(\mf g,\mf t)$ be a finite-dimensional split reductive Lie algebra defined over $\mb C$, and let $\bm{\Sigma} = (\Sigma, a, Q)$ be a one-pointed\footnote{The many-point case amounts to repeating the present discussion independently at each marked point, cf. Rem.~\ref{rem:many_point_case}.} wild Riemann surface.

In \S~\ref{sec:deformations} we define a universal space of admissible deformations $\bm B_Q$ of the irregular type $Q$ (keeping the underlying pointed Riemann surface $(\Sigma,a)$ fixed), cf. Def.~\ref{def:deformation_space}. We then define the (pure) \emph{local wild mapping class group} (WMCG, cf.~\cite[\S~8]{boalch_2014_poisson_varieties_from_riemann_surfaces}) $\Gamma_Q$ to be the fundamental group of the space of admissible deformations, cf. \S~\ref{sec:local_WMCG}.
The notion of admissible deformation depends on the root system $\Phi_{\mf g} = \Phi(\mf g,\mf t)$, and on the pole orders $d_{\alpha} \in \set{0,\dotsc,p}$ of the meromorphic function germ $q_{\alpha} = \alpha \circ Q$, for all $\alpha \in \Phi_{\mf g}$: for a deformation to be admissible, one asks that the pole orders after evaluation at each $\alpha$ remain constantly equal to $d_\alpha$. One of the main aims of this paper is to describe $\Gamma_Q$---which in principle depends on $Q$, $p$ and the integers $d_\alpha$---in terms of the couple $(\Phi_{\mf g}, \mf t)$.

We will see (cf.~\eqref{eq:pure_local_WMCG_factors}) that $\Gamma_Q$ breaks into a product
\begin{equation}
	\label{eq:pure_local_WMCG_factors-intro}
	\Gamma_Q \simeq \prod_{i = 1}^p \pi_1(\bm B_i,A_i),
\end{equation}
of fundamental groups of certain hyperplane complements arising as deformation spaces of each of the coefficients of $Q$. This simple observation will be the starting point to construct, in \S~\ref{sec:fission}, an increasing filtration of root subsystems of $\Phi_{\mf g}$, obtained by \emph{fission}, which will be our key tool to describe $\Gamma_Q$. In the generic case our filtration is trivial and one obtains the pure $\mf g$-braid group; in general, we find a generalisation thereof controlled by a nested sequence of (Levi) root subsystems. See \S~\ref{sec:ex-typeA} for a concrete illustration of this phenomenon.

In \S~\ref{sec:general_results} we prove some general properties of local wild mapping class groups.
First we explain how their description can be reduced to the case of simple Lie algebras, cf. \S~\ref{sec:simple_is_enough}.
Secondly, we obtain a uniform bound on the number of nontrivial factors of the local WMCG in~\eqref{eq:pure_local_WMCG_factors-intro}.

\begin{theorem}[\S~\ref{sec:descending_ranks}]
	The number of nontrivial factors of~\eqref{eq:pure_local_WMCG_factors-intro} is at most the semisimple rank of $\mf g$ (i.e., the rank of $\Phi_{\mf g}$).
\end{theorem}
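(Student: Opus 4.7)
The plan is to identify exactly when each factor $\pi_1(\bm B_i,A_i)$ in~\eqref{eq:pure_local_WMCG_factors-intro} is nontrivial, and then to bound the number of such levels by a short dimension count in the semisimple Cartan.

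First I would unwind $\bm B_i$ from \S~\ref{sec:deformations}: admissibility forces $A_i$ into the linear subspace $\mf t_i \sse \mf t$ cut out by $\alpha(A_i) = 0$ for every root $\alpha \in \Phi_{\mf g}$ with $d_\alpha < i$, and then requires $A_i$ to avoid the hyperplanes $\ker \alpha \cap \mf t_i$ for the roots with $d_\alpha = i$. If no root satisfies $d_\alpha = i$, then $\bm B_i = \mf t_i$ is a linear space, hence contractible, and the factor is trivial; conversely, if some root $\alpha$ has $d_\alpha = i$, then $\alpha(A_i) \neq 0$ witnesses that the hyperplane $\ker \alpha \cap \mf t_i$ is proper in $\mf t_i$, so the resulting hyperplane-complement already has nontrivial fundamental group (a loop around $\ker \alpha$ suffices). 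Thus the number of nontrivial factors equals the cardinality of
\[
	I \ceqq \bigl\{\, i \in \{1,\dotsc,p\} : \exists\, \alpha \in \Phi_{\mf g} \text{ with } d_\alpha = i \,\bigr\}.
\]

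Next I would convert the bound on $|I|$ into a dimension estimate on the semisimple part. Set $\overline{\mf t} \ceqq \mf t / \mf z(\mf g)$; since each root vanishes on the centre $\mf z(\mf g)$, it descends to a linear form on $\overline{\mf t}$, and by definition $\dim_{\mb C} \overline{\mf t} = \on{rank}(\Phi_{\mf g})$. Writing $\overline A_j$ for the class of $A_j$, form the nested family
\[
	V_i \ceqq \on{span}_{\mb C}\bigl(\overline A_i, \overline A_{i+1}, \dotsc, \overline A_p\bigr) \sse \overline{\mf t}, \qquad i = 1,\dotsc,p+1,
\]
with $V_{p+1} = 0$, giving the chain $0 = V_{p+1} \sse V_p \sse \dotsm \sse V_1 \sse \overline{\mf t}$.

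The crucial step is that $i \in I$ forces $V_i \supsetneq V_{i+1}$: choosing $\alpha$ with $d_\alpha = i$, one has $\alpha(A_j) = 0$ for every $j > i$, so $\alpha$ vanishes on $V_{i+1}$, while $\alpha(A_i) \neq 0$ shows $\overline A_i \notin V_{i+1}$. Summing up,
\[
	|I| \leq \#\bigl\{\, i : V_i \supsetneq V_{i+1} \,\bigr\} \leq \dim V_1 \leq \dim \overline{\mf t} = \on{rank}(\Phi_{\mf g}),
\]
which is the desired inequality. The only substantive obstacle is the first paragraph---extracting the explicit hyperplane-complement description of $\bm B_i$ from the admissibility conditions, and verifying that triviality of $\pi_1(\bm B_i,A_i)$ is equivalent to the absence of roots of pole order exactly $i$; once this is in place the dimension count is immediate, and passing from $\mf t$ to $\overline{\mf t}$ is precisely what turns the naive bound $\dim \mf t$ into the sharper semisimple rank.
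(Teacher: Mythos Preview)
Your argument is correct, and it takes a genuinely different (in fact, dual) route from the paper. The paper works with the \emph{fission filtration} of Levi root subsystems $\Phi_1 \sse \dotsm \sse \Phi_{p+1} = \Phi_{\mf g}$ from \S~\ref{sec:fission}: a factor $\pi_1(\bm B_i)$ can be nontrivial only if $\rk(\Phi_i) < \rk(\Phi_{i+1})$ (using~\eqref{eq:dimension_kernel} and the Levi property), and the total number of rank jumps along this chain is at most $\rk(\Phi_{\mf g})$. You instead track the increasing chain of subspaces $V_i = \spann_{\mb C}(\overline A_i,\dc,\overline A_p) \sse \overline{\mf t}$ spanned by the top coefficients, and observe that a root $\alpha$ with $d_\alpha = i$ witnesses $\overline A_i \notin V_{i+1}$. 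These two chains are related by annihilation: $\Phi_i$ is exactly the set of roots vanishing on $\spann_{\mb C}(A_i,\dc,A_p)$. Your approach is more elementary and self-contained --- it needs only basic linear algebra and the standard fact that a nonempty complex hyperplane complement has nontrivial $\pi_1$ once at least one hyperplane is removed --- and it handles the reductive case directly by passing to $\mf t/\mf z(\mf g)$, whereas the paper first reduces to the simple case in \S~\ref{sec:simple_is_enough}. The paper's approach, on the other hand, is embedded in the fission framework that drives the entire classification in \S\S~\ref{sec:type_A}--\ref{sec:type_D}, so the rank argument there is a byproduct of machinery built for later use.
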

Note that the bound in the previous theorem is independent of the order $p$ of the pole of the irregular type, and only depends on $\mf g$. This bound---which is not evident a priori---rests on the description of the local WMCG via fission, and is a first piece of evidence for the significance of fission in the description of the structure of $\Gamma_Q$.

Finally, as another application of fission we classify local WMCGs for low-rank Lie algebras.

\begin{theorem}[\S~\ref{sec:low_rank}]
	\leavevmode
	\begin{itemize}
		\item If the semisimple rank of $\mf g$ is one, then the local WMCG is either trivial or infinite cyclic (i.e., isomorphic to the pure $\mf g$-braid group).

		\item If the semisimple rank of $\mf g$ is two, then the local WMCG is either trivial or isomorphic to one of the groups $\mb{Z}$, $\mb{Z}^2$, or the pure $\mf g$-braid group.
	\end{itemize}
\end{theorem}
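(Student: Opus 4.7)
The plan is to exploit the product decomposition $\Gamma_Q \simeq \prod_{i=1}^p \pi_1(\bm B_i, A_i)$ from~\eqref{eq:pure_local_WMCG_factors-intro} together with the bound on nontrivial factors from the previous theorem, and to analyse each factor through the filtration of subspaces $V_i \ceqq \bigcap_{\alpha \in \Phi_{\mf g},\, d_\alpha < i} \ker \alpha \sse \mf t$. By \S~\ref{sec:simple_is_enough} I reduce immediately to $\mf g$ simple. Admissibility implies that $\bm B_i$ is the complement in $V_i$ of the restricted hyperplanes $\ker \alpha \cap V_i$ for $\alpha$ with $d_\alpha = i$, and that every such restriction is a proper hyperplane of $V_i$; hence $\dim V_{i+1} < \dim V_i$ whenever $\{\alpha : d_\alpha = i\} \ne \emptyset$, and the nontrivial factors correspond exactly to the indices where the dimension strictly drops.

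In the rank-one simple case, $\dim \mf t = 1$ and $\Phi_{\mf g} = \{\pm \alpha\}$; for $Q \ne 0$ the unique drop occurs from dimension $1$ to $0$ at $i = d_\alpha$, where $\bm B_i \cong \mb C^*$ contributes a $\mb Z$ factor and all others are trivial; for $Q = 0$ one has $V_1 = 0$ and $\Gamma_Q = \{1\}$.

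For rank two, $\dim \mf t = 2$ and there are at most two nontrivial factors. I would first reduce to the case $\dim V_1 = 2$ (the subcase $\dim V_1 \le 1$ yielding $\{1\}$ or $\mb Z$ via the rank-one analysis inside a line), and then enumerate drop profiles. A single drop $2 \to 0$ at some $i_0$ forces $\{\alpha : d_\alpha = i_0\}$ to span $\mf t^*$; the admissibility argument then excludes any other pole order---a root with $d_\beta < i_0$ would shrink $V_{i_0}$, and one with $d_\beta > i_0$ would demand $A_{d_\beta} \in V_{i_0+1} = 0$---so every root lies at this level and $\pi_1(\bm B_{i_0}) = \pi_1(\mf t_{\reg}) = \PB_{\mf g}$. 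A profile $2 \to 1 \to 0$ at $i_1 < i_2$ gives $\{\alpha : d_\alpha = i_1\} = \{\pm \alpha\}$ with $\pi_1(\bm B_{i_1}) \cong \mb Z$ and $V_{i_1+1} = \ker \alpha$; since our rank-two root systems are reduced, every other root is linearly independent from $\alpha$ and restricts nontrivially to $\ker \alpha$, so the same admissibility argument forces all remaining roots to share a common pole order $i_2$, contributing a second $\mb Z$ factor and $\Gamma_Q \cong \mb Z^2$. A profile $2 \to 1$ with no second drop is incompatible with admissibility for the roots outside $\{\pm \alpha\}$.

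The main obstacle is ruling out \emph{exotic} single-factor contributions---such as $\pi_1$ of the complement of a reducible $A_1 \times A_1$ Levi arrangement inside $B_2$, which would a priori contribute $\mb Z^2$ from one factor. The resolution is that the residual subspace becomes zero immediately after a rank-two drop, so all roots outside the candidate subsystem are forced into $\{\alpha : d_\alpha = i_0\}$ as well, restoring the full $\PB_{\mf g}$ factor. Making this bookkeeping uniform across the three rank-two simple types $A_2$, $B_2$, $G_2$ is the most delicate step.
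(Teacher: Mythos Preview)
Your proposal is correct and follows essentially the same strategy as the paper: exploit the product decomposition~\eqref{eq:pure_local_WMCG_factors-intro}, track the dimensions of the kernels $V_i = \Ker(\Phi_i)$, and observe that a rank-one drop contributes a factor $\mb Z$ while a full drop at a single level contributes $\PB_{\mf g}$.

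The paper's treatment is considerably shorter because it first establishes (in \S~\ref{sec:fission}) that each $\Phi_i$ is a \emph{Levi} subsystem of $\Phi_{\mf g}$, and then simply lists the possible Levi filtrations in rank two: $\vn \sse \Phi_{\mf g}$, $\Phi_{\mf h} \sse \Phi_{\mf g}$, and $\vn \sse \Phi_{\mf h} \sse \Phi_{\mf g}$ with $\rk(\Phi_{\mf h}) = 1$. The first is generic and yields $\PB_{\mf g}$; the other two are governed by Cor.~\ref{cor:rank_jump}. You instead argue directly from dimension profiles and admissibility constraints, which is valid but more laborious. In particular, your final paragraph about ``exotic single-factor contributions'' (such as an $A_1 \times A_1$ pattern inside $B_2$) is a worry that the Levi framework dissolves immediately: $A_1 \times A_1$ is \emph{not} a Levi subsystem of $B_2$ (its span annihilates only $0 \in \mf t$, whose centraliser has root system $B_2$ itself), so such a configuration can never occur as a $\Phi_i$. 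Your ad hoc resolution---that once the residual kernel is zero, no root can have higher pole order---is correct, and is in effect a direct verification of this Levi property in the rank-two case. One small terminological slip: you call $A_1 \times A_1 \sse B_2$ a ``Levi arrangement'', which it is not; but this does not affect your argument.
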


In particular this classifies the local WMCGs for the exceptional simple Lie algebra of type $G_2$, while from \S~\ref{sec:type_A} we focus on \emph{classical} simple Lie algebras, and give a complete explicit description of the local WMCG.

Beginning with type $A$, we attach a tree to any sequence of root subsystems obtained from fission, which we thus call a \emph{fission} tree (see Def.~\ref{def:fission_tree} and cf.~\cite[App.~C]{boalch_2008_irregular_connections_and_kac_moody_root_systems}). An example of this construction is given in \S~\ref{sec:ex-typeA}, to which we refer the reader for an illustration of the next theorem.
Similarly, in \S~\ref{sec:type_BC}, we attach a \emph{bichromatic} tree to any irregular type of type $B/C$ (cf. Def.~\ref{def:bichromatic_fission_tree}); and finally a generalisation thereof in type $D$, in \S~\ref{sec:type_D} (cf. Def.~\ref{def:generalised_tree}).
This leads to the following description of local wild mapping class groups, for \emph{all} classical simple Lie algebras.

\begin{theorem}[Thmm.~\ref{thm:tree_gives_wmcg_type_A},~\ref{thm:tree_gives_wmcg_type_BC} and~\ref{thm:tree_gives_wmcg_type_D}]
	The generalised fission tree uniquely determines the local WMCG, as follows: at each node of the tree one attaches the pure braid group of an explicit hyperplane arrangement, and the local WMCG is the product of those factors.
\end{theorem}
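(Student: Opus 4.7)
The plan is to build on the product decomposition~\eqref{eq:pure_local_WMCG_factors-intro}: it suffices to identify each factor $\pi_1(\bm B_i,A_i)$ with the pure braid group of an explicit hyperplane arrangement, and then to show that these factors are precisely the ones read off the generalised fission tree at depth $i$. The first step would be to describe $\bm B_i$ concretely in terms of $\mf t$ and the fission filtration: by construction, $\bm B_i$ parameterises admissible deformations of the coefficient $A_i$, i.e.\ deformations within the centre of the Levi subalgebra $\mf l_{i-1}$ determined by the previous coefficients, without crossing any new walls $\alpha = 0$ for roots whose pole order drops at level $i$. This identifies $\bm B_i$ with the complement, inside the centre of $\mf l_{i-1}$, of the hyperplanes cut out by those roots of $\mf l_{i-1}$ that are not roots of $\mf l_i$; the nested pair $(\mf l_{i-1},\mf l_i)$ and the resulting arrangement are exactly what the fission tree encodes at depth $i$.

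With this reformulation in hand, I would then carry out a type-by-type analysis. In type $A$ each Levi $\mf l_{i-1}$ is a product of smaller $\mf{gl}$'s corresponding to a partition refining the previous one, and the relevant arrangement is a product of braid arrangements---one for each child block---so its fundamental group is a product of pure braid groups $\PB_{n_j}$, which gives Thm.~\ref{thm:tree_gives_wmcg_type_A}. In type $B/C$ the Cartan is a space of signed diagonal matrices, and the passage from $\mf l_{i-1}$ to $\mf l_i$ may split or merge entries of opposite sign; the bichromatic tree records, at each node, colouring data remembering which coordinates are still identified with their negatives. The resulting arrangement factors into type-$B/C$ Coxeter arrangements on the monochromatic blocks and type-$A$ braid arrangements on the bichromatic pairs, yielding Thm.~\ref{thm:tree_gives_wmcg_type_BC}. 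In type $D$ the strategy is analogous, except that one of the sub-arrangements that can occur at a node is not the Coxeter arrangement of any root system: this is the exceptional factor announced in the abstract. I would read the corresponding arrangement off the generalised tree of Def.~\ref{def:generalised_tree} by explicit enumeration of the roots of $\mf l_{i-1}$ not in $\mf l_i$, obtaining Thm.~\ref{thm:tree_gives_wmcg_type_D}.

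The main obstacle I expect is this last point in type $D$: both the identification of the non-Coxeter node arrangement as a specific hyperplane complement, and the computation of its fundamental group, which cannot rely on Brieskorn's theorem directly. A natural approach is to fibre the complement over a smaller hyperplane complement whose $\pi_1$ is already understood from the type-$A$ or type-$B$ analysis, and then to run the long exact sequence of the fibration, after verifying local triviality via a Lyashko--Looijenga-type argument. The uniqueness part of the statement then reduces to a combinatorial check that the fission filtration is fully recoverable from the generalised tree, which should be immediate from the construction of the latter.
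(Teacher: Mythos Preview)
Your overall strategy matches the paper's: start from the product decomposition~\eqref{eq:pure_local_WMCG_factors-intro}, identify each factor $\bm B_i$ as the complement of the hyperplanes $\Ker(\alpha)$, $\alpha \in \Phi_{i+1} \sm \Phi_i$, inside the centre $\Ker(\Phi_i)$ of the $i$-th Levi, and then determine this restricted arrangement type by type. That is exactly the content of Thmm.~\ref{thm:restricted_arrangement_type_A}, \ref{thm:restricted_arrangement_type_B}, \ref{thm:restricted_arrangement_type_D}, which feed into the tree theorems.

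There are, however, two points where your proposal diverges from what is actually needed.

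First, your account of type $B/C$ is garbled. The bichromatic colouring does not record ``which coordinates are still identified with their negatives''; it records the Lie type of each irreducible component of the Levi (green for type $A$, blue for the unique type-$B/C$ component, if any). The crucial computation (Thm.~\ref{thm:restricted_arrangement_type_B}) is that when a type-$B/C$ component of rank $m$ breaks inside a larger $B_n$, the restricted arrangement on the kernel is again a full type-$B/C$ arrangement; when a type-$A$ component breaks one is back to Thm.~\ref{thm:restricted_arrangement_type_A}. There is no splitting into ``monochromatic blocks'' and ``bichromatic pairs'' in the sense you describe.

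Second, and more importantly, you over-invest in the type-$D$ exotic factor. The theorem does \emph{not} require computing $\pi_1$ of the exotic arrangement: it simply \emph{defines} $\PB^{BC,D}_{r,s}$ to be the fundamental group of the explicit complement $X_{r,s}$ of~\eqref{eq:exotic_complement}, and asserts that this group appears as one factor of $\Gamma_Q$. All that is needed is Thm.~\ref{thm:restricted_arrangement_type_D}, namely that the restricted arrangement is exactly $X_{r,s}$ for the appropriate $(r,s)$ read off the tree (number of nontrivial vs.\ trivial type-$A$ components). The fibration argument you sketch is carried out in the paper only as a separate result (Prop.~\ref{thm:exotic_type}), and only for $s=1$; it plays no role in the proof of Thm.~\ref{thm:tree_gives_wmcg_type_D}. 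Likewise, your closing remark about recovering the fission filtration from the tree is unnecessary: ``uniquely determines'' here just means there is an explicit formula for $\Gamma_Q$ in terms of the tree data, which is what~\eqref{eq:type_BC_cabled_braid_group} and its type-$D$ analogue provide.
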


Let us point out that for types $A$ and $B/C$ all factors correspond to root-hyperplane arrangements, while for type $D$ there is an ``exotic'' factor (which is \emph{not} crystallographic) further studied in Prop.~\ref{thm:exotic_type}.
(The simplest example involves seven hyperplanes in $\mb C^3$.)

Finally, in \S~\ref{sec:braid_operad} we relate the (monochromatic) fission trees of type $A$ to \emph{cabled} braids---as in the title. This formalises the driving idea that local wild mapping class groups ought to be described in terms of multi-scale braiding, cf. the example of \S~\ref{sec:ex-typeA}.
More precisely, to any tree $\mc T$ we attach a pure \emph{cabled} braid group $\ms{P\!B}(\mc T)$ by using the compositions of the pure braid group operad (cf. Def.~\ref{def:pure_cabled_artin_braid_group}), and we prove the following.

\begin{theorem}[Thm.~\ref{thm:wmcg_type_A_equal_cabled_braid_groups}]
	\label{thm: 4-intro}
	If $\mc T$ is the fission tree of a type-$A$ irregular type $Q$, then there is a group isomorphism $\Gamma_Q \simeq \ms{P\!B}(\mc T)$.
\end{theorem}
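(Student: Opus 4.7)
The plan is to combine the general product decomposition $\Gamma_Q \simeq \prod_{i=1}^p \pi_1(\bm B_i, A_i)$ of~\eqref{eq:pure_local_WMCG_factors-intro} with an explicit computation of each factor in type $A$, and then match the resulting product with $\ms{P\!B}(\mc T)$ by unwinding the operadic cabling used in Def.~\ref{def:pure_cabled_artin_braid_group}.

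To begin, I would unpack the admissibility constraint on a deformation $A_i' \in \mf t$ of the $i$-th coefficient. Writing $A_i$ as a diagonal matrix with entries $\lambda_1^{(i)}, \dotsc, \lambda_n^{(i)}$, for each root $\alpha = e_j - e_k$ admissibility at level $i$ forces $\alpha(A_i') = 0$ when $d_\alpha < i$ and $\alpha(A_i') \neq 0$ when $d_\alpha = i$, while imposing no condition when $d_\alpha > i$. The first condition requires that within each class of the relation $j \sim_i k \iff d_{e_j - e_k} < i$ the eigenvalues of $A_i'$ must coincide, while the second requires that distinct classes of $\sim_i$ sitting inside a common class of $\sim_{i+1}$ receive distinct common values. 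In terms of the fission tree $\mc T$, the classes of $\sim_{i+1}$ correspond to the internal nodes $v$ of $\mc T$ at level $i+1$, whose children at level $i$ are exactly the classes of $\sim_i$ they contain. Consequently, $\bm B_i$ is naturally identified with a product $\prod_v \Conf_{n_v}(\mb C)$ of configuration spaces indexed by these internal nodes $v$, each having $n_v$ children, so that $\pi_1(\bm B_i, A_i) \simeq \prod_v \PB_{n_v}$.

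Taking the product over $i$ yields $\Gamma_Q \simeq \prod_v \PB_{n_v}$, with $v$ now running over \emph{all} internal nodes of $\mc T$. On the other side, iterating the operadic cabling in Def.~\ref{def:pure_cabled_artin_braid_group} shows that $\ms{P\!B}(\mc T)$ decomposes in the same way: each internal node $v$ contributes a copy of $\PB_{n_v}$, and because we are in the pure setting the inner cables attached to distinct nodes remain separate and give mutually commuting subgroups of $\ms{P\!B}(\mc T)$. This produces the desired isomorphism of abstract groups. The main obstacle I anticipate is making this isomorphism \emph{canonical}: one has to verify that the natural loops generating each $\pi_1(\bm B_i, A_i)$, which braid two eigenvalues inside a single class of $\sim_{i+1}$, correspond under the identification to the generators of $\ms{P\!B}(\mc T)$ obtained by cabling an inner pure braid along its parent strand. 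With compatible basepoints and a careful tracking of which configuration-space factor at level $i$ sits at which internal node of $\mc T$, this matching of generators identifies the two direct products and yields $\Gamma_Q \simeq \ms{P\!B}(\mc T)$.
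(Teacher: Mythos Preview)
Your approach is essentially the same as the paper's: both arguments identify $\Gamma_Q$ and $\ms{P\!B}(\mc T)$ with the common product $\prod_{v} \PB_{n_v}$ over internal nodes of $\mc T$, and conclude by matching the two products. The paper organises this slightly differently---it cites the already-proven Thm.~\ref{thm:tree_gives_wmcg_type_A} for the $\Gamma_Q$ side and then runs an induction on the height of $\mc T$ for the $\ms{P\!B}(\mc T)$ side---but the content is the same as your direct computation.

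One point deserves more care: your sentence ``because we are in the pure setting the inner cables attached to distinct nodes remain separate and give mutually commuting subgroups'' is precisely the assertion that each operadic composition $\gamma$ is an \emph{injective} group homomorphism, and this is what makes the iterated cabling produce $\prod_v \PB_{n_v}$ rather than some quotient. The paper isolates and proves this as Lem.~\ref{lem:injective_morphism_operad_composition}; you should either invoke that lemma or give its (short) proof, since injectivity is not automatic from the group-operad axioms alone. Finally, your closing paragraph about making the isomorphism canonical and matching generators is unnecessary for the statement as written, which only asserts an abstract group isomorphism---the paper does not pursue this either.
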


To sum up, our results show that $\Gamma_Q$ is obtained via the following steps (the last one being performed via operads only in type $A$; a conjectural generalisation is given in Conj.~\ref{conj:generalised_operads}):
\begin{equation*}
	\mf g \xra{\text{fission}} \Phi_1 \sse \dm \sse \Phi_{p+1} = \Phi_{\mf g} \xra{\text{fission tree}} \mc T  \xra{\text{braid cabling}} \ms{P\!B}(\mc T).
\end{equation*}

Let us finally note that the use of fission trees goes much beyond a description of pure local wild mapping class groups: they will also be used to study the local full/nonpure case~\cite{doucot_rembado_2023_topology_of_irregular_isomonodromy_times_on_a_fixed_pointed_curve}, also twisted~\cite{boalch_doucot_rembado_2022_twisted_local_wild_mapping_class_groups_configuration_spaces_fission_trees_and_complex_braids}, and they have applications beyond the local case (see particularly \S\S~3.7 and 5 of~\cite{boalch_doucot_rembado_2022_twisted_local_wild_mapping_class_groups_configuration_spaces_fission_trees_and_complex_braids}).

\subsubsection*{Conventions}

All Lie algebras, commutative (associative, unitary) algebras, and tensor products are defined over $\mb C$: a few more basic notions/notations, used throughout the body of the paper, are summarised in App.~\ref{sec:notions_notations}.

\subsection{An example in type A}
\label{sec:ex-typeA}

To conclude our introduction, we would like to work out explicitly a simple example in type $A$ illustrating the main general phenomena which will be studied later. We hope that this toy model can serve both as a motivation and as a test case for the general results proved in the paper.

Fix a pointed Riemann surface $(\Sigma,a)$, and choose a local coordinate $z$ vanishing at the marked point.
In the case of meromorphic connections on a holomorphic rank-$n$ vector bundle $E \to \Sigma$, an (untwisted) irregular type $Q$ at this point is simply an element $Q \in z^{-1}\mf t[z^{-1}]$, where $\mf t \sse \mf{gl}_n(\mb C)$ is the standard Cartan subalgebra of diagonal matrices.
More precisely a connection $\nabla$ on $E$ with irregular type $Q$ at the point $a \in \Sigma$ can be locally written
\begin{equation}
	\nabla = \dif - \mc A, \qquad \mc A = \dif Q + \frac{\Lambda}z \dif z,
\end{equation}
in some (local) trivialisation of $E$.
Here the residue $\Lambda$ is a constant block-diagonal matrix, centralising $Q$.

Consider in particular the irregular type
\begin{equation*}
	Q = \frac{A_2}{z^2} + \frac{A_1}z, \qquad A_1, A_2 \in \mf t,
\end{equation*}
corresponding to a pole of order 3 for $\nabla$ at the marked point.
Such an irregular type is generic when the leading coefficient $A_2$ has $n$ distinct eigenvalues (see~\cite{jimbo_miwa_ueno_1981_monodromy_preserving_deformation_of_linear_ordinary_differential_equations_with_rational_coefficients_i_general_theory_and_tau_function,malgrange_1983_sur_les_deformations_isomonodromiques_II_singularites_irregulieres}).	We are rather interested in the non-generic situation, hence let us suppose that $Q$ is \emph{not} generic.

\subsubsection*{Admissible deformations}

Deforming the irregular type $Q$ means varying the coefficients of $A_2$ and $A_1$.
What does it mean for such a deformation to be admissible?

It goes as follows.
First decompose the fibre $E_a \simeq \mb C^n$ of the vector bundle over $a \in \Sigma$, into eigenspaces for $A_2 \in \End_{\mb C}(E_a)$:
\begin{equation}
	\label{eq:decomposition_fibre_1}
	\mb C^n = \bops_{i = 1}^k V_i, \qquad V_i = \Ker(A_2 - \lambda_i \Id_{\mb C^n}) \sse \mb C^n,
\end{equation}
where $\set{ \lambda_1,\dc,\lambda_k } \sse \mb C$ is the spectrum of $A_2$.
Then use $[A_2,A_1] = 0$, whence $A_1 (V_i) \sse V_i$, and split further into eigenspaces for the restriction $A_{1,i} \ceqq \eval[1]{A_1}_{V_i} \in \End_{\mb C}(V_i)$:
\begin{equation}
	\label{eq:decomposition_fibre_2}
	V_i = \bops_{j = 1}^{k_i} W_{ij}, \qquad W_{ij} = \Ker( A_{1,i} - \lambda_{j,i} \Id_{V_i}) \sse V_i,
\end{equation}
where $\set{ \lambda_{1,i},\dc,\lambda_{k_i,i} } \sse \mb C$ is the spectrum of $A_{1,i}$.
An \emph{admissible} deformation of $(A_1,A_2) \in \mf t^2$ is another pair $(A_1',A_2')$ inducing the same decompositions~\eqref{eq:decomposition_fibre_1}--\eqref{eq:decomposition_fibre_2}.
It follows that the space of admissible deformations $\bm B_Q$ will depend on the multiplicities of the eigenvalues of $A_1$ and $A_2$.
For arbitrary reductive Lie algebras, we will generalise this looking at the positions of the coefficients of irregular types relative to the root-hyperplanes of $(\mf g,\mf t)$.

\subsubsection*{Local WMCG and cabling of braids}

Keeping the notation from \S~\ref{sec:ex-typeA}, we will now give an example of braid cabling to describe elements of the pure local wild mapping class group $\pi_1(\bm B_Q)$.

Concretely, consider first a loop in the configuration space of the (ordered) eigenvalues of $A_2 \in \End(\mb C^n)$, keeping them distinct, which yields a (pure) braid $\sigma \in \PB_k$; then the $i$-th strand of $\sigma$ can be replaced by another braid $\tau_i \in \PB_{k_i}$, corresponding to braiding the eigenvalues of $A_{1,i} \in \End(V_i)$ for $i \in \set{1,\dc,k}$.
The result of this operation is a cabled braid
\begin{equation}
	\label{eq:cabling}
	(\sigma,\tau_1,\dc,\tau_k) \lmt \gamma(\sigma;\tau_1,\dc,\tau_k) \in \PB_{\bm k}, \qquad \bm k = \sum_i k_i,
\end{equation}
as in the theory of (action) operads, cf. \S~\ref{sec:braid_operad} and~\cite[Chp.~5]{yau_2019_infinity_operads_and_monoidal_categories_with_group_equivariance}.

Let us now give the example: consider the (traceless) rank-3 irregular type with
\begin{equation}
	\label{eq:example_irregular_type}
	A_2 =
	\begin{pmatrix}
		-1 &    &   \\
		   & -1 &   \\
		   &    & 2
	\end{pmatrix}, \quad A_1 =
	\begin{pmatrix}
		-1 &   &   \\
		   & 1 &   \\
		   &   & 0
	\end{pmatrix} \in \mf{sl}_3(\mb C),
\end{equation}
whose admissible deformations look like
\begin{equation*}
	A_2' =
	\begin{pmatrix}
		a &   &    \\
		  & a &    \\
		  &   & a'
	\end{pmatrix}, \quad A_1' =
	\begin{pmatrix}
		b &    &   \\
		  & b' &   \\
		  &    & c
	\end{pmatrix},
\end{equation*}
with $a,a',b,b',c \in \mb C$ such that $a \neq a'$ and $b \neq b'$.

With the above notation, we have $(k;k_1,k_2) = (2;2,1)$, and we can take e.g.
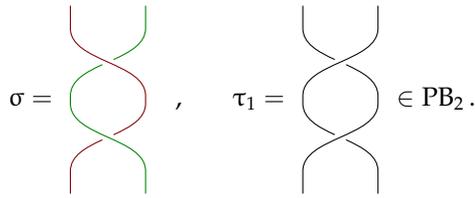
\begin{figure}[H]
	\begin{center}
		\begin{tikzpicture}
			\pic[
				name prefix=braid,
				braid/strand 1/.style={red!50!black},
				braid/strand 2/.style={green!50!black}
			]
			at (0,0) {braid={s_1s_1}};
			\node[at=(braid-2-1),label=west:{$\sigma =$}] {};
			\node[at=(braid-1-1),label=east:{\, ,}] {};
		\end{tikzpicture} \quad
		\begin{tikzpicture}
			\pic[
			name prefix=braid,
			]
			at (0,0) {braid={s_1^{-1}s_1^{-1}}};
			\node[at=(braid-2-1),label=west:{$\tau_1 =$}] {};
			\node[at=(braid-1-1),label=east:{$\in \PB_2.$}] {};
		\end{tikzpicture}
	\end{center}
	\caption{Before cabling
		\label{fig:braids_to_be_cabled}}
\end{figure}

(Note $\sigma \neq \tau_1$, and colours are just for comparison with the next figure.)

Then $\tau_2 \in \PB_1$ is necessarily trivial, and the resulting cabled braid is
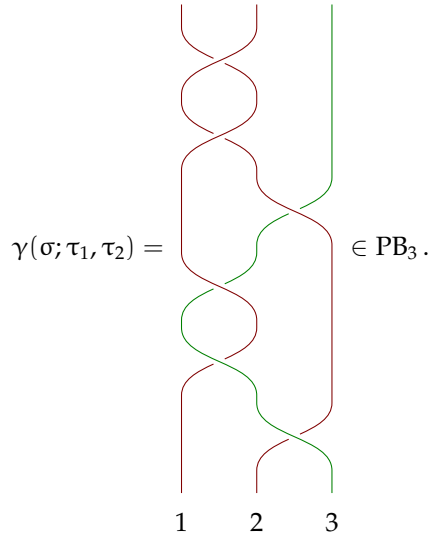
\begin{figure}[H]
	\begin{center}
		\begin{tikzpicture}
			\pic[
			name prefix=braid,
			braid/strand 1/.style={red!50!black},
			braid/strand 2/.style={red!50!black},
			braid/strand 3/.style={green!50!black},
			]
			at (0,0) {braid={s_1^{-1}s_1^{-1}s_2s_1s_1s_2}};
			\node[at=(braid-1-3),label=west:{$\gamma(\sigma;\tau_1,\tau_2) = $}] {};
			\node[at=(braid-2-3),label=east:{$\in \PB_3.$}] {};
			\node[at=(braid-1-e),label=south:{$1$}] {};
			\node[at=(braid-2-e),label=south:{$2$}] {};
			\node[at=(braid-3-e),label=south:{$3$}] {};
		\end{tikzpicture}
	\end{center}
	\caption{Example of (pure) braid cabling
		\label{fig:cabling}}
\end{figure}

Note particularly how the strands 1 and 2 in Fig.~\ref{fig:cabling} move in parallel, and are braided with the strand 3: this corresponds to the (distinct) eigenvalues $(a,a')$ looping around each other, following the braid $\sigma$; then the (distinct) eigenvalues $(b,b')$ do the same, following $\tau_1$, so the strands 1 and 2 are also eventually braided---on the top left corner of the diagram.
The latter phenomenon is only possible because one eigenspace of the leading coefficient $A_2$ is 2-dimensional, so it can break in the subleading coefficient $A_1$.

\subsubsection*{Local WMCG and trees}

Back to the general setting of this section, encoding the operation~\eqref{eq:cabling} in terms of its inputs/outputs naturally yields trees.
Namely, the splitting~\eqref{eq:decomposition_fibre_1} can be pictorially represented as
\begin{center}
	\begin{tikzpicture}
		\node at (0,.5) {$\mc T_{A_2} =$};
		\foreach \name/\x/\y in {A1/1/0,A2/2/0,A4/4/0,A5/5/0,
				B1/3/1}
		\vertex (\name) at(\x,\y){};
		\node[label=south:1] at (1,0) {};
		\node[label=south:2] at (2,0) {};
		\node[label=south:$k-1$] at (4,0) {};
		\node[label=south:$k$] at (5,0) {};
		\node[label=north:$\ast$] at (3,1) {};
		\node at (3,0) {$\dm$};
		\foreach \from/\to in {A1/B1,A2/B1,A4/B1,A5/B1}
		\draw (\from) -- (\to);
	\end{tikzpicture}
\end{center}
This diagram is a tree of height 1, whose root is the node $\ast$, and with a choice of ordering for the leaves.
Analogously can be done for the splittings~\eqref{eq:decomposition_fibre_2}, getting (labelled) trees $\mc T_{A_{1,1}}, \dc, \mc T_{A_{1,k}}$.
Gluing each of those at the corresponding leaf of $\mc T_{A_2}$ then yields a tree $\mc T_{A_2,A_1}$ of height 2; for instance~\eqref{eq:example_irregular_type} corresponds to
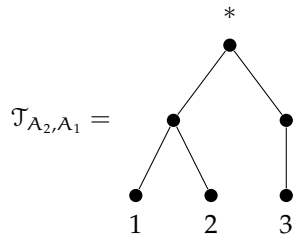
\begin{figure}[H]
	\begin{center}
		\begin{tikzpicture}
			\node[label=north:$\ast$] at (2.25,2) {};
			\node at (0,1) {$\mc T_{A_2,A_1} =$};
			\foreach \name/\x/\y in {A1/1/0,A2/2/0,A3/3/0,
					B1/1.5/1,B2/3/1,
					C1/2.25/2}
			\vertex (\name) at(\x,\y){};
			\node[label=south:1] at (1,0) {};
			\node[label=south:2] at (2,0) {};
			\node[label=south:3] at (3,0) {};
			\foreach \from/\to in {A1/B1,A2/B1,A3/B2,
					B1/C1,B2/C1}
			\draw (\from) -- (\to);
		\end{tikzpicture}
	\end{center}
	\caption{Example of fission tree (in type $A$)
		\label{fig:example_fission_tree}}
\end{figure}

Iterating this procedure associates a tree $\mc T_Q$ with any irregular type $Q$, cf.~\cite[App.~C]{boalch_2008_irregular_connections_and_kac_moody_root_systems}, and see~\ref{ex:examples_presentations} below for more examples.
Finally, the pure braid group operad can be evaluated on $\mc T_Q$, and the result is a group isomorphic to the pure local wild mapping class group $\Gamma_Q$.
For instance, the tree in Figure~\ref{fig:example_fission_tree} yields a group isomorphic to $\PB_2 \times \PB_2$: its generators are the elements $(\sigma,\tau_1)$ used in the cabling of Fig.~\ref{fig:cabling}.
One can use them to compute the braiding of Stokes data, as we do in Ex.~\ref{ex:braiding_stokes_data}. In general, the procedure we just discussed formalises conveniently and precisely the intuition of a ``multi-scale braiding'' description of local WMCGs.

\subsubsection*{The general case}

Let us highlight some of the main differences between the general setting considered in the body of the paper and the example discussed above.
\begin{enumerate}
	\item We consider throughout the text principal bundles with arbitrary (reductive) structure groups $G$. We do not use faithful $G$-modules to define admissible deformations; analogously, we will not assume that $\mf g$ is a matrix Lie algebra. Avoiding this assumption is necessary in order to work with general structure groups, cf. e.g.~\cite[Lem~A.2]{boalch_2002_g_bundles_isomonodromy_and_quantum_weyl_groups}.

	\item We shall use the root system of $(\mf g,\mf t)$ to construct deformation spaces of irregular types, generalising (differences of) eigenvalues of semisimple endomorphisms of $\mb C^n$; these are akin to root valuation strata considered, for different purposes, in~\cite{goresky_kottwitz_macpherson_2009_codimensions_of_root_valuation_strata}.

	\item We will introduce more general \emph{fission} trees for Lie algebras of classical types $B/C$ and $D$: to do so we will have to deal with several possible ``fissions'' at each stage, related to the irreducible components of all possible Levi subsystems of the root systems at hand, and we will accordingly introduce new decoration of the trees to encode them. In particular this will allow us to formulate a more precise version of the multilevel braiding conjecture, for any classical Lie algebra: see Conj.~\ref{conj:generalised_operads}.
\end{enumerate}

\renewcommand{\thetheorem}{\arabic{section}.\arabic{theorem}} %

\subsection*{Acknowledgements}

We thank P.~Boalch, F.~Naef and G.~Paolini for listening/answering to some of our questions.

\section{Admissible deformations of wild Riemann surfaces}
\label{sec:deformations}

We will start start from recalling the general definition of admissible families of wild Riemann surfaces in the untwisted setting, following~\cite{boalch_2014_geometry_and_braiding_of_stokes_data_fission_and_wild_character_varieties}.

\vspace{5pt}

Let $\Sigma$ be a Riemann surface, $G$ a connected complex reductive Lie group, $\mf g = \Lie(G)$ its Lie algebra, $T \sse G$ a maximal torus, and $\mf t = \Lie(T) \sse \mf g$ the corresponding Cartan subalgebra.
A (dressed, untwisted) wild Riemann surface structure on $\Sigma$ is the choice of a finite \emph{ordered} set $\bm a = (a_1,\dc, a_m) \in \Sigma^m$ of $m \geq 0$ distinct marked points, and \emph{untwisted} irregular types $\bm Q = (Q_1,\dc,Q_m)$ based there; in turn an untwisted irregular type is the germ of a $\mf t$-valued meromorphic function, defined up to holomorphic terms.

More precisely let $\ms O_{\Sigma,a}$ be the local ring of germs of functions at a point $a \in \Sigma$, $\wh{\ms O}_{\Sigma,a}$ its completion, and $\wh{\ms K}_{\Sigma,a}$ the field of fractions of this latter. Consider the quotient $\ms T_{\Sigma,a} \ceqq \wh{\ms K}_{\Sigma,a} \bs \wh{\ms O}_{\Sigma,a}$, consisting of ``tails'' of formal Laurent series. By definition, an untwisted irregular type at $a \in \Sigma$ is an element
\begin{equation*}
	Q \in \mf t \ots \ms T_{\Sigma,a}.
\end{equation*}

If $z$ is a local coordinate with $z(a) = 0$ then
\begin{equation*}
	\wh{\ms O}_{\Sigma,a} \simeq \mb C \llb z \rrb, \qquad \wh{\ms K}_{\Sigma,a} \simeq \mb C (\!(z)\!),
\end{equation*}
and $\ms T_{\Sigma,a} \simeq \mb C (\!(z)\!) \bs \mb C \llb z \rrb$, so we can write
\begin{equation}
	\label{eq:untwisted_irregular_type}
	Q = \sum_{i = 1}^p A_i z^{-i} \in z^{-1} \mf t [z^{-1}] \simeq \mf t (\!( z ) \!) \bs \mf t \llb z \rrb, \qquad A_1,\dc,A_p \in \mf t,
\end{equation}
for some integer $p \geq 1$.
Hereafter we simply refer to such elements as ``irregular types''---all implicitly untwisted/unramified.

\begin{remark}
	Recall that the motivation behind the definition of irregular types is to single out (and give intrinsic meaning to) the time-variables for isomonodromic deformations of irregular singular meromorphic connections on principal $G$-bundles over $\Sigma$.
	Notably, they control norms forms for (the principal of) local connection 1-forms, as in~\eqref{eq:local_form}.

	This way~\eqref{eq:untwisted_irregular_type} controls the exponential factors of the fundamental solutions of the associated system of \emph{linear} differential equations (i.e. the local horizontal sections of the connection), which ultimately lead to the Stokes data encoding their exponential growth/decay along prescribed infinitesimal directions at the pole.
	As mentioned in the introduction, this is the starting point to define the (universal) parameter spaces such that the overall deformations of the meromorphic connections are isomonodromic, i.e. Stokes data are locally constant.
\end{remark}

Let $\bm \Sigma = (\Sigma,\bm a,\bm Q)$ be a wild Riemann surface, which we want to deform in \emph{admissible} fashion.
To this end let $\bm B$ be a complex manifold and
\begin{equation*}
	\Sigma_b = \pi^{-1}(b) \lhra \ul \Sigma \lxra{\pi} \bm B, \qquad b \in \bm B
\end{equation*}
a holomorphic family of Riemann surfaces fibering over $\bm B$.
Choose an $m$-tuple
\begin{equation*}
	\ul{\bm a} = (\ul a_1,\dc,\ul a_m) \cl \bm B \lra \ul \Sigma^m
\end{equation*}
of global sections.
Finally consider a holomorphic $\bm B$-family of irregular types\footnote{
	In the setting of this paper the marked points will be fixed, hence we will have a fixed target space of irregular types, and $\ul{\bm Q}$ will be a holomorphic map with domain $\bm B$.}
$b \mt \ul Q_i(b)$, based at the marked points $\ul a_i(b) \in \Sigma_b$, and let $\ul{\bm Q} = \bigl( \ul Q_1,\dc,\ul Q_m \bigr)$ be their collection.

\begin{definition}
	The triple $(\ul \Sigma,\ul{\bm a},\ul{\bm Q})$ is a holomorphic $\bm B$-\emph{family} of wild Riemann surfaces.
	Denoting it $\ul{\bm \Sigma} \to \bm B$, the ``fibre'' at $b \in \bm B$ is the wild Riemann surface
	\begin{equation*}
		\bm \Sigma_b = \bigl( \Sigma_b,\bm a_b,\bm Q_b \bigr), \qquad \bm a_b \ceqq \bigl( \ul a_1(b),\dc,\ul a_m(b) \bigr), \quad \bm Q_b \ceqq \bigl( \ul Q_1(b),\dc,\ul Q_m(b) \bigr).
	\end{equation*}

	If $0 \in \bm B$ is a base point, and $\bm B$ is connected, the family $\ul{\bm \Sigma} \to (B,0)$ is a \emph{deformation} of the ``starting'' wild Riemann surface $\bm \Sigma_0$.
\end{definition}

To introduce the admissibility condition let $\Phi_{\mf g} = \Phi(\mf g,\mf t) \sse \mf t^{\dual}$ be the root system of the split Lie algebra $(\mf g,\mf t)$, and consider an irregular type as~\eqref{eq:untwisted_irregular_type}.
Then consider the meromorphic function germ (defined up to holomorphic terms)
\begin{equation*}
	q_{\alpha} \ceqq \alpha (Q)	\in \ms T_{\Sigma,a}, \qquad \alpha \in \Phi_{\mf g},
\end{equation*}
obtained by evaluating the irregular type on a root.\fn{
	This means evaluating $\alpha \ots \Id \cl \mf t \ots \ms T_{\Sigma,a} \to \mb C \ots \ms T_{\Sigma,a} = \ms T_{\Sigma,a}$ at $Q$.}
Hence a collection of $\bm B$-families of irregular types yields $\bm B$-families
\begin{equation*}
	b \lmt \ul q_{j,\alpha}(b) \ceqq \alpha \circ \bigl( \ul Q_j(b) \bigr) \in \ms T_{\Sigma_b,\ul a_j(b)},
\end{equation*}
for $j \in \set{1,\dc,m}$ and $\alpha \in \Phi_{\mf g}$.

Finally, if $q$ is the germ of a meromorphic function at $a \in \Sigma$, let $\ord(q) \in \mb Z_{\geq 0}$ be its pole order at the base point, with the convention that $\ord(q) = 0$ if $q$ is holomorphic---this integer is well defined up to adding holomorphic terms.

\begin{definition}[\cite{boalch_2014_geometry_and_braiding_of_stokes_data_fission_and_wild_character_varieties}, Def.~10.1]
	\label{def:admissible_deformations}
	An \emph{admissible} deformation of $\bm \Sigma_0$ is a deformation $\ul{\bm \Sigma} \to (\bm B,0)$ of $\bm \Sigma_0$ such that for all $b \in \bm B$:
	\begin{itemize}
		\item $\Sigma_b$ is smooth;

		\item  the marked points $\bm a_b \in (\Sigma_b)^m$ are distinct;

		\item one has
		      \begin{equation}
			      \label{eq:admissible_deformations}
			      \ord \bigl( \, \ul q_{\alpha,j}(b) \bigr) = \ord \bigl( \, \ul q_{\alpha,j}(0) \bigr) \in \mb Z_{\geq 0}, \qquad \text{for all } \alpha \in \Phi_{\mf g}.
		      \end{equation}
	\end{itemize}
\end{definition}

In words: the genus of each (smooth) Riemann surface, the cardinality of each set of marked points, and the pole orders of the irregular types evaluated at each root, are \emph{constant} along the deformation.

Recall the set of nonzero pole orders which occur at $\ul a_i(b) \in \Sigma_b$ is the set of \emph{levels} of the irregular type $\ul Q_i(b)$: this paper is about the multilevel case.

\subsection{Wild deformations}

By Def.~\ref{def:admissible_deformations} we can deform the complex structure of the Riemann surface underlying a wild one, and move the marked points inside their configuration space.
These are the tame isomonodromy times (controlled by the stack $\mc M_{g,m}$); as explained in the introduction, we are rather interested in the additional local wild moduli of the irregular types---freezing the underlying pointed surface.

\vspace{5pt}

Consider thus simply an irregular type as~\eqref{eq:untwisted_irregular_type}.
Introducing the local coordinate $x = z^{-1}$ (on a punctured neighbourhood of the marked point) yields a polynomial
\begin{equation}
	\label{eq:base_irregular_type}
	Q = \sum_{i = 1}^p A_i x^i \in x \, \mf t [x], \qquad A_i \in \mf t.
\end{equation}
To avoid discussing trivial cases, assume $A_p \neq 0$ in what follows.

Now $q_{\alpha} \in x\mb C[x] \cup \set{0}$ for all roots $\alpha \in \Phi_{\mf g}$, and the pole orders are controlled by the function
\begin{equation*}
	\bm d_Q \cl \alpha \lmt d_{\alpha} = \deg_x (q_{\alpha}), \qquad \alpha \in \Phi_{\mf g},
\end{equation*}
with the analogous convention that $\deg_x(0) = 0$.
(On the whole $\bm d_Q(\Phi_{\mf g}) \sse \Set{0,\dc,p}$, and $d_{\alpha} = 0$ if an only if $q_{\alpha} = 0$.)

Thus~\eqref{eq:admissible_deformations} becomes
\begin{equation}
	\label{eq:admissible_deformations_2}
	\deg_x(\alpha \circ Q') = d_{\alpha}, \qquad \alpha \in \Phi_{\mf g},
\end{equation}
where $Q' = \sum_{i > 0} A_i' x^i$ is another polynomial with coefficients $A_i' \in \mf t$.

\begin{remark}
	In principle we should consider polynomials of arbitrary degree, but only those with $\deg_x(Q') \leq p$ will contribute to the topology of the (universal) space of admissible deformations.

	Indeed imposing~\eqref{eq:admissible_deformations_2} yields $A'_i \in \Ker(\Phi_{\mf g}) \sse \mf t$ for $i > p$, so all coefficients of higher degree live in a contractible space.
	(Note $\Ker(\Phi_{\mf g}) = \bigcap_{\Phi_{\mf g}} \Ker(\alpha) = \mf Z_{\mf g}$, which is \emph{not} zero in the nonsemisimple case.)
\end{remark}

Hence we pose:

\begin{definition}[Cf.~\cite{boalch_2014_geometry_and_braiding_of_stokes_data_fission_and_wild_character_varieties}, Ex.~10.1]
	\label{def:deformation_space}
	The \emph{universal deformation space} of~\eqref{eq:base_irregular_type} is
	\begin{equation}
		\label{eq:deformation_space}
		\bm B_Q \ceqq \Set{ Q' \in x \, \mf t [x] | \deg_x(Q') \leq p, \, \deg_x( \alpha \circ Q') = d_{\alpha} \text{ for } \alpha \in \Phi_{\mf g} }.
	\end{equation}
\end{definition}

Note this depends on $Q$ via the integer $p \geq 1$ and the tuple $\bm d_Q \in \mb Z_{\geq 0}^{\Phi_{\mf g}}$; the use of the term ``universal'' is justified in Remark \ref{rem:moduli_spaces}.

To describe $\bm B_Q$ let us use the natural identification
\begin{equation*}
	\Set{ Q' \in x \, \mf t [x] | \deg_x(Q') \leq p } \lxra{\simeq} \mf t^p,
\end{equation*}
mapping
\begin{equation}
	\label{eq:identification}
	Q' = \sum_{i = 1}^p A_i' x^i \lmt (A_1',\dc,A_p') \in \mf t^p.
\end{equation}
We obtain an inclusion $\bm B_Q \sse \mf t^p$, and the tuple $\bm A = (A_1,\dc,A_p) \in \bm B_Q$ corresponds to the base point~\eqref{eq:base_irregular_type}.

\begin{proposition}
	\label{prop:decomposition_deformation_space}
	There is a product decomposition $\bm B_Q = \prod_{i = 1}^p \bm B_i\sse \mf t^p$, where
	\begin{equation}
		\label{eq:deformation_space_for_coefficients}
		\bm B_i \ceqq \bigcap_{d_{\alpha} < i} \Ker(\alpha) \cap \bigcap_{d_{\alpha} = i} \bigl( \mf t \sm \Ker(\alpha) \bigr) \sse \mf t.
	\end{equation}
\end{proposition}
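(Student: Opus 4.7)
The plan is to transport the single global condition $\deg_x(\alpha\circ Q')=d_\alpha$ (for every root $\alpha$) into pointwise conditions on each coefficient $A_i'$ separately, and then observe that these pointwise conditions no longer couple distinct indices.

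First, under the identification~\eqref{eq:identification} a polynomial $Q'=\sum_{i=1}^p A_i'x^i\in x\,\mf t[x]$ of degree $\le p$ corresponds to the tuple $(A_1',\dots,A_p')\in\mf t^p$; moreover, since $Q'\in x\,\mf t[x]$ has no constant term, applying a root yields $\alpha\circ Q'=\sum_{i=1}^p\alpha(A_i')\,x^i\in x\,\mb C[x]$, a polynomial with vanishing constant term. With the convention $\deg_x(0)=0$, the condition $\deg_x(\alpha\circ Q')=d_\alpha$ is then equivalent to the conjunction of
\begin{equation*}
\alpha(A_i')=0\quad\text{for every }i\in\{1,\dots,p\}\text{ with }i>d_\alpha,
\end{equation*}
together with $\alpha(A_{d_\alpha}')\neq0$ when $d_\alpha\ge 1$ (the case $d_\alpha=0$ being already covered by the first clause, which then forces $\alpha\circ Q'=0$).

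Next I would regroup these conditions by the index of the coefficient they constrain. For a fixed $i\in\{1,\dots,p\}$, the conditions bearing on $A_i'$ are precisely: $\alpha(A_i')=0$ for every $\alpha$ with $d_\alpha<i$, and $\alpha(A_i')\neq 0$ for every $\alpha$ with $d_\alpha=i$; roots with $d_\alpha>i$ impose no constraint on $A_i'$ (they constrain $A_{d_\alpha}'$ and $A_j'$ for $j>d_\alpha$, but not $A_i'$). By definition of $\Ker(\alpha)$ these are exactly the defining conditions of $\bm B_i$ in~\eqref{eq:deformation_space_for_coefficients}.

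Since the constraints on $A_i'$ and those on $A_j'$ for $i\neq j$ are independent, the tuple $(A_1',\dots,A_p')$ lies in $\bm B_Q$ if and only if $A_i'\in\bm B_i$ for each $i$, yielding $\bm B_Q=\prod_{i=1}^p\bm B_i$ as subsets of $\mf t^p$. The only step that requires some care is the bookkeeping for $d_\alpha=0$ (to confirm no pathology arises from $\deg_x(0)=0$), and the verification that $\bm A=(A_1,\dots,A_p)$ itself lies in the product, which is immediate from the definition of $\bm d_Q$ and ensures each factor $\bm B_i$ is nonempty; beyond this, the argument is purely combinatorial and presents no real obstacle.
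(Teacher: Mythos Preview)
Your proof is correct and follows essentially the same approach as the paper: translate the degree condition $\deg_x(\alpha\circ Q')=d_\alpha$ into the coefficientwise constraints $\alpha(A'_{d_\alpha})\neq 0$ and $\alpha(A'_i)=0$ for $i>d_\alpha$, then regroup by the index $i$ to see that the conditions decouple into the factors $\bm B_i$. The paper phrases the final step as an explicit swap of products and intersections, but the content is identical.
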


\begin{proof}
	By definition $(A_1',\dc,A_p') \in \bm B_Q$ if and only if for any root $\alpha \in \Phi_{\mf g}$ one has
	\begin{equation*}
		\alpha(A'_{d_{\alpha}}) \neq 0, \qquad \alpha(A'_i) = 0, \qquad i > d_{\alpha},
	\end{equation*}
	and there are no conditions on $A_1',\dc,A'_{d_{\alpha}-1} \in \mf t$.
	Hence
	\begin{equation*}
		\bm B_Q = \bigcap_{\alpha \in \Phi_{\mf g}} \left( \Biggl( \, \prod_{1 \leq i < d_{\alpha}} \mf t \Biggr) \times \bigl( \mf t \sm \Ker(\alpha) \bigr) \times \prod_{d_{\alpha} < i  \leq p} \Ker(\alpha) \right) \sse \mf t^p,
	\end{equation*}
	and the conclusion follows by swapping products/intersections.
\end{proof}

We conclude this section with few observations.

\begin{remark}[Universal deformation]
	\label{rem:moduli_spaces}

	The pointed space $(\bm B_Q, \bm A)$ is a fine moduli space of admissible deformations of the irregular type $Q$ on the ``starting'' wild Riemann surface $\bm \Sigma = (\Sigma, a, Q)$.

	Indeed, let $\bm B$ be a connected pointed complex manifold, and consider the constant family of Riemann surfaces $\ul \Sigma \ceqq \Sigma \times \bm B \to \bm B$, with the constant section $\ul a = (a, \Id) \cl \bm B \to \Sigma \times \bm B$.
	The projection on the first factor of $\Sigma \times \bm B$ induces isomorphisms $\Sigma_b \simeq \Sigma$ and $\ms T_{\Sigma_b, \ul a(b)} \simeq \ms T_{\Sigma, a}$ for all $b \in  \bm B$.
	Let $\ms T_{\Sigma, a}^{\leq p} \sse \ms T_{\Sigma, a}$ be the subspace of tails of Laurent series of pole order at most $p$. Via the previous identification, a holomorphic $\bm B$-family of irregular types $\ul Q$ of pole order at most $p$ at $\ul a$ is a holomorphic function $\bm B \to \mf t \otimes \ms T_{\Sigma, a}^{\leq p}$.
	Let us identify $\mf t \otimes  \ms T_{\Sigma, a}^{\leq p}$ with $\mf t^p$ sending $\sum_{i = 1}^p A_i' z^{-i}$ to $(A_1', \ldots, A_p')$.
	Then, by construction, a map of pointed complex manifolds $f: (\bm B, 0) \to (\mf t^p, \bm A)$ yields an admissible deformation $(\Sigma \times \bm B, \ul a, \ul Q)$ of $(\Sigma, a, Q)$ if and only if $f(\bm B) \sse \bm B_Q$.
	By definition, this means that  $(B_Q, \bm A)$ is the fine moduli space of admissible deformations of $Q$ (on the fixed pointed Riemann surface $(\Sigma, a)$) with pole order bounded by $p$.
	In particular, the family of irregular types
	\begin{equation*}
		\ul Q \cl \bm B_Q \lra \mf t \otimes \ms T_{\Sigma, a}^{\leq p}, \qquad (A_1', \ldots, A_p') \mapsto  \sum_{i = 1}^p A'_i z^{-i}
	\end{equation*}
	is the universal admissible deformation of $Q$ with pole order bounded by $p$.
\end{remark}

\begin{remark}[Trivial deformations]
	\label{rem:trivial_deformations}

	One can add an element of the centre to any coefficient, i.e. $(\mf Z_{\mf g})^p$ acts on $\bm B_Q$ by factorwise translations.

	Hence in principle one could consider the quotient space $\bm B_Q \bs \mf Z_{\mf g}^p$, which yields the same fundamental group, and amounts to considering the semisimple part of $\mf g$: this will be done later on, but at this stage the main definitions are cleaner without such restrictions.
	Moreover we need reductive Lie algebras to discuss fission recursively, see \S~\ref{sec:fission}.
\end{remark}

\begin{remark}[Intrinsic definition]
	The pole order of (the germ of) a meromorphic function on $\Sigma$ is well defined up to local biholomorphisms.
	Hence the integers $d_{\alpha} \in \set{ 0, \dc, p }$ depend on $Q$ only, and not on the identifications $\wh{\ms O}_{\Sigma,a} \simeq \mb C \llb z \rrb \sse \mb C (\!( z )\!) \simeq \wh{\ms K}_{\Sigma,a}$: the space~\eqref{eq:deformation_space} is well defined.
\end{remark}

\begin{remark}[Many-point case]
	\label{rem:many_point_case}
	It is straightforward to extend~\eqref{eq:deformation_space} to the case of several \emph{fixed} marked points on $\Sigma$.

	Namely if $\bm a = (a_1,\dc,a_m) \in \Sigma^m$ we still consider a trivial family $\ul \Sigma = \Sigma \times \bm B_{\bm Q} \to \bm B_{\bm Q}$, equipped with the corresponding global constant sections, and this time $\bm B_{\bm Q}$ is a space of (simultaneous) admissible deformations of irregular types at each marked point.
	More precisely $\bm Q = (Q_1,\dc,Q_m)$, with
	\begin{equation*}
		Q_j \in \mf t \ots \ms T_{\Sigma,a_j}, \qquad j \in \set{1,\dc,m},
	\end{equation*}
	each with a pole of order $p_j\geq 0$, and then
	\begin{equation*}
		\bm B_{\bm Q} \ceqq \prod_{j = 1}^m \bm B_{Q_j} \sse \prod_{j = 1}^m \mf t^{p_j}. \qedhere
	\end{equation*}
\end{remark}

\section{Pure local wild mapping class groups}
\label{sec:local_WMCG}

The main definition is the following:

\begin{definition}
	\label{def:wild_mapping_class_group}

	The \emph{pure local wild mapping class group} (WMCG) of the wild Riemann surface $\bm \Sigma = (\Sigma,a,Q)$ is
	\begin{equation}
		\label{eq:pure_local_WMCG}
		\Gamma_Q \ceqq \pi_1(\bm B_Q,\bm A),
	\end{equation}
	where as before $\bm A = (A_1,\dc,A_p) \in \mf t^p$ and $Q = \sum_{i = 1}^p A_i x^i$.
	(We will also say that $\Gamma_Q$ is a pure local WMCG \emph{of type} $\mf g$.)
\end{definition}

Once more, this does not depend on the underlying pointed surface $(\Sigma,a)$, but only on (the integers associated with) $Q$.
The terminology is chosen with a view towards the \emph{global} WMCG, to be defined elsewhere (cf. \S~\ref{sec:outlook}).

Importantly by Prop.~\ref{prop:decomposition_deformation_space} there is a product decomposition
\begin{equation}
	\label{eq:pure_local_WMCG_factors}
	\Gamma_Q \simeq \prod_{i = 1}^p \pi_1(\bm B_i,A_i),
\end{equation}
and further the many-point case yields a product of such groups, with a factor at each marked point, as in Rem.~\ref{rem:many_point_case}.

In the rest of the paper we will study the pure local WMCGs, particularly aiming at a classification of (the isomorphism class of) the factors~\eqref{eq:pure_local_WMCG_factors}.

\begin{example}[Abelian case]
	Suppose $\mf g$ is abelian.
	Then $\Phi_{\mf g} = \vn$, and $\bm B_Q = \mf t^p$: all pure local WMCGs of type $\mf g$ are trivial.
\end{example}

The next example is less trivial; nonetheless it shows there is not much to say about the generic case.

\begin{example}[Generic case]
	\label{ex:generic_case}

	Suppose $\bm d$ is \emph{constant}, say $d_{\alpha} = d \in \set{ 0,\dc,p }$ for all $\alpha \in \Phi_{\mf g}$.
	If $d = 0$ then $Q \in z^{-1}\mf Z_{\mf g}[z^{-1}]$, and~\eqref{eq:deformation_space} is the contractible space $\mf Z_{\mf g}^p \sse \mf t^p$: $\Gamma_Q$ is trivial.

	Else $d > 0$, which is precisely the case of meromorphic connections with a single level.
	One finds
	\begin{equation*}
		\bm B_i = \mf t, \quad \bm B_d = \mf t_{\reg},\quad \bm B_{i'} = \mf Z_{\mf g},
	\end{equation*}
	for $i < d < i'$.
	Hence $\bm B_d$ is a strong deformation retract of $\bm B_Q$, and
	\begin{equation*}
		\Gamma_Q \simeq \pi_1(\mf t_{\reg},A_d) = \PB_{\mf g},
	\end{equation*}
	the pure $\mf g$-braid group.
\end{example}

As explained in the introduction, in the next sections we will describe the topology of the deformation space in the case where several levels occur, going \emph{beyond} pure $\mf g$-braid groups.

\section{Filtrations and fission}
\label{sec:fission}

In this section we will rewrite~\eqref{eq:deformation_space_for_coefficients} as the complement of a hyperplane arrangement, involving the root system $\Phi_{\mf g}$ in an essential way.
This makes it possible to prove the general results of \S~\ref{sec:general_results}.
Moreover we will introduce Dynkin diagrams, which will be crucial for the classification statements of \S\S~\ref{sec:type_A}--\ref{sec:type_D}.

While the material of this section is standard, we chose to spell it out for the sake of self-contained exposition.

\vspace{5pt}

The starting point is noticing that there is an increasing sequence of subsets
\begin{equation}
	\label{eq:root_filtration}
	\Phi_1 \sse \dm \sse \Phi_{p+1} = \Phi_{\mf g}, \qquad \text{with} \qquad \Phi_i \ceqq \Set{ \alpha \in \Phi_{\mf g} | d_{\alpha} < i}.
\end{equation}
(In particular $\Phi_1 = \Set{ \alpha \in \Phi_{\mf g} | q_{\alpha} = 0 }$.)

But there is more structure, which follows from the triangular inequality of the standard valuation of the field of formal Laurent series in one variable:

\begin{lemma}
	Every term of~\eqref{eq:root_filtration} is a root subsystem of $\Phi_{\mf g}$.
\end{lemma}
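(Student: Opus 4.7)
The plan is to realise each $\Phi_i$ as the trace on $\Phi_{\mf g}$ of a linear subspace $V_i \sse \mf t^{\dual}$, from which the root-subsystem property follows by the general principle that $\Phi_{\mf g} \cap V$ is a root subsystem for any linear $V \sse \mf t^{\dual}$ (closure under negation is automatic, and closure under reflections $s_\alpha(\beta) = \beta - \langle \beta,\alpha^\vee \rangle \alpha$ reduces to stability of $V$ under $\mb C$-linear combinations of its elements).

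First I would note that the assignment $\alpha \mt q_\alpha = (\alpha \ots \Id)(Q)$ is $\mb C$-linear as a map $\mf t^{\dual} \to \ms T_{\Sigma,a}$, since $Q \in \mf t \ots \ms T_{\Sigma,a}$ is fixed and evaluation is bilinear in its arguments. Next I would observe that the subset $W_i \sse \ms T_{\Sigma,a}$ of tails of pole order strictly less than $i$ is itself a $\mb C$-linear subspace. Indeed, writing $\ord$ for the valuation on $\wh{\ms K}_{\Sigma,a}$ coming from the standard one on $\mb C(\!(z)\!)$, the pole-order function is $-\ord$, and the triangular (ultrametric) inequality $\ord(f + g) \geq \min\bigl(\ord(f),\ord(g)\bigr)$ together with invariance of $\ord$ under scaling by nonzero constants gives exactly that $W_i$ is closed under sums and scalar multiples.

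Setting $V_i \ceqq \set{\alpha \in \mf t^{\dual} | q_\alpha \in W_i}$, one obtains a linear subspace as the preimage of a linear subspace under a linear map, and by definition $\Phi_i = \Phi_{\mf g} \cap V_i$. The root-subsystem property then follows: for $\alpha, \beta \in \Phi_i$ the reflection $s_\alpha(\beta)$ lies in $\Phi_{\mf g}$ automatically, while its expression as a $\mb C$-linear combination of $\alpha$ and $\beta$ places it in $V_i$; similarly $-\alpha \in V_i$ since $q_{-\alpha} = -q_\alpha$. I do not expect any genuine obstacle: the statement is essentially the combination of two elementary facts (linearity of evaluation and the ultrametric inequality), and the only point requiring a moment of care is to confirm that the convention $\ord(0) = +\infty$ (so that $d_0 = 0$) makes the zero tail belong to every $W_i$ for $i \geq 1$, ensuring $V_i$ is a genuine subspace rather than an affine one.
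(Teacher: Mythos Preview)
Your proof is correct and takes essentially the same approach as the paper: the paper only gestures at the argument with the phrase ``which follows from the triangular inequality of the standard valuation of the field of formal Laurent series in one variable'', and you have faithfully expanded this into a complete argument via the linearity of $\alpha \mapsto q_\alpha$ and the ultrametric inequality. The paper also notes, as an alternative, that the statement follows from identifying each $\Phi_i$ with the root system of the reductive centraliser $\mf h_i \sse \mf g$ (Lem.~\ref{lem:root_subsystems}), but your route is the one the paper actually signals.
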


A different proof follows from the discussion below, were we identify each subset $\Phi_i \sse \Phi_{\mf g}$ with the root system of a reductive subalgebra $\mf h_i \sse \mf g$ (containing $\mf t$).

\begin{remark}
	Different filtrations~\eqref{eq:root_filtration} may yield pure local WMCGs of type $\mf g$ which are isomorphic, e.g. acting on each term by an automorphism of the root system---or in particular by the Weyl group.
	This is part of the classification problem.
\end{remark}

In particular we can now rewrite~\eqref{eq:deformation_space_for_coefficients} as
\begin{equation}
	\label{eq:deformation_space_for_coefficients_2}
	\bm B_i = \Ker(\Phi_i) \cap \bigcap_{\Phi_{i+1} \sm \Phi_i} \bigl( \mf t \sm \Ker(\alpha) \bigr) \sse \mf t,
\end{equation}
so in principle the factors of $\Gamma_Q$ are controlled by nested root subsystems of $\Phi_{\mf g}$.
However not all subsystems will appear, but rather only \emph{Levi subsystems}: these arise by taking nested centralisers, as we will momentarily explain.

\subsection{Fission: Lie groups/algebras}

The sequence~\eqref{eq:root_filtration} is associated with a filtration of complex reductive subgroups of $G$ (cf.~\cite[Eq.~33]{boalch_2014_geometry_and_braiding_of_stokes_data_fission_and_wild_character_varieties}).
In turn their Lie algebras are (reductive) Levi factors of parabolic subalgebras of $\mf g$, which we use to give a more explicit description of~\eqref{eq:deformation_space_for_coefficients_2}.

\vspace{5pt}

Let us then define the ``fission'' subgroups
\begin{equation*}
	H_i \ceqq \Set{ g \in G | \Ad_g (A_k) = A_k, \, i \leq k \leq p } \sse G, \qquad i \in \set{1,\dc,p},
\end{equation*}
fitting into an increasing sequence $H_1 \sse \dm \sse H_p \sse G$ of connected complex reductive groups.

\begin{remark}
	As mentioned in the introduction, the terminology is due to the ``breaking'' of the structure group of the principal bundle at the boundary of the (real, oriented) blowup of $(\Sigma,a)$, from $G$ down to $H_1$.
	This phenomenon is only visible in the wild case, and it is different from the usual ``fusion'' operation (= sewing surfaces with boundaries, along their boundaries).
\end{remark}

In particular
\begin{equation*}
	H_1 = \Set{ g \in G | \Ad_g(Q) = Q }
\end{equation*}
is the centraliser of the irregular type in $G$---the stabiliser for the diagonal Adjoint action $G \to \GL(\mf g)$ on each coefficient.
Note $T \sse H_1$, and we allow a strict inclusion.

\begin{example}[Generic fission]
	In the generic case of Ex.~\ref{ex:generic_case} we find
	\begin{equation*}
		H_i = T \ , \qquad  H_i' = G,
	\end{equation*}
	for $i \leq d < i'$.
	Namely the structure group breaks down to the maximal torus as soon as the generic coefficient is encountered.

	It is only in the nongeneric/multilevel case that we encounter nontrivial fissions.
\end{example}

Denote now $\mf h_i \ceqq \Lie(H_i)$ the $i$-th ``fission'' subalgebra, which by construction is the centraliser of the coefficients $A_i,\dc,A_p \in \mf t$ in $\mf g$.
In particular
\begin{equation*}
	\mf h_1 = \Set{ X \in \mf g | [X,Q] = 0 }
\end{equation*}
is the centraliser of $Q$ in $\mf g$.
As expected $\mf h_1$ contains $\mf t = \Lie(T)$, and in turn $\mf t \sse \mf h_i$ is a Cartan subalgebra for $i \in \set{1,\dc,p}$.

\begin{lemma}
	\label{lem:root_subsystems}
	One has $\Phi_i = \Phi_{\mf h_i} \ceqq \Phi(\mf h_i,\mf t)$ for $i \in \set{1,\dc,p}$, in the notation of~\eqref{eq:root_filtration}.
\end{lemma}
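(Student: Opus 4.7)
The approach is a direct computation using the root space decomposition of $\mf g$ with respect to $\mf t$. Write $\mf g = \mf t \oplus \bigoplus_{\alpha \in \Phi_{\mf g}} \mf g_\alpha$, where each root space $\mf g_\alpha$ is one-dimensional and $H \in \mf t$ acts on $\mf g_\alpha$ via $\ad_H = \alpha(H) \Id$. Since $\mf h_i$ is the centraliser in $\mf g$ of the commuting family $\{A_i, \dotsc, A_p\} \sse \mf t$, and since $\mf t$ itself lies in every such centraliser, the root space $\mf g_\alpha$ is contained in $\mf h_i$ if and only if $\alpha(A_k) = 0$ for every $k$ with $i \leq k \leq p$.

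The next step is to translate this vanishing condition into a statement about $d_\alpha$. By definition $q_\alpha = \alpha \circ Q = \sum_{k=1}^p \alpha(A_k) x^k \in x\, \mb C[x]$, and $d_\alpha = \deg_x(q_\alpha)$ (with $d_\alpha = 0$ when $q_\alpha = 0$). The condition $\alpha(A_k) = 0$ for all $k \geq i$ is then exactly the condition that $q_\alpha$ has no monomial of degree $\geq i$, i.e. $\deg_x(q_\alpha) < i$, i.e. $d_\alpha < i$. This matches precisely the defining condition of $\Phi_i$ in~\eqref{eq:root_filtration}.

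Putting the two steps together, the root space decomposition of $\mf h_i$ with respect to the Cartan subalgebra $\mf t \sse \mf h_i$ reads
\begin{equation*}
    \mf h_i = \mf t \oplus \bops_{\alpha \in \Phi_i} \mf g_\alpha,
\end{equation*}
which identifies $\Phi(\mf h_i, \mf t) = \Phi_i$ as required. As a byproduct this also yields the (previously unproven) lemma stating that each $\Phi_i \sse \Phi_{\mf g}$ is a root subsystem, since the left-hand side is visibly the root system of the reductive Lie algebra $\mf h_i$.

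There is no real obstacle here: the argument is essentially a bookkeeping exercise once one notes that the coefficients $A_k$ lie in the Cartan subalgebra $\mf t$, so that their simultaneous centraliser is automatically a Levi subalgebra read off from the vanishing locus of the linear forms $\alpha \in \Phi_{\mf g}$ on the tuple $(A_i, \dotsc, A_p)$. The only point that deserves mention is why $\mf h_i$ is reductive with the same Cartan $\mf t$—this is the standard fact that the centraliser of a subset of a Cartan subalgebra in a reductive Lie algebra is again reductive and contains $\mf t$ as a Cartan.
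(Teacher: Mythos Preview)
Your proof is correct and rests on the same key observation as the paper's: a root space $\mf g_\alpha$ lies in the centraliser of a subset of $\mf t$ if and only if $\alpha$ vanishes on that subset. The only difference is packaging---you do it in one shot for the whole tuple $(A_i,\dc,A_p)$, whereas the paper runs a descending induction on $i$, treating one coefficient at a time via the identity $\Phi_{\mf h_i} = \Phi_{\mf h_{i+1}} \cap \set{A_i}^\perp$; this inductive phrasing foreshadows the recursive fission viewpoint used later, but for the lemma itself your direct argument is equally (if not more) transparent.
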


\begin{proof}
	By induction on $i \in \set{p,\dc,1}$.
	The base is the identity
	\begin{equation}
		\label{eq:root_system_levi}
		\Phi_{\mf h_p} = \Phi_{\mf g} \cap \set{A_p}^{\perp} \sse \Phi_{\mf g},
	\end{equation}
	which follows by observing that $\mf g_{\alpha} \cap \mf h_p \neq (0)$ if and only if $\alpha(A_p) = 0$.

	Then replacing $(H_p,G,A_p)$ with $(H_i,H_{i+1}, A_i)$ at each step proves the claim.
\end{proof}

Analogous ``descending'' inductions will be a common theme in the rest of the paper.
In particular here let us consider the centraliser $\mf h = \Ker(\ad_A) \sse \mf g$ of an element $A \in \mf t$ (in $\mf g$).

\begin{remark}[Reductive centralisers]
	\label{rem:reductive_centraliser}
	It is important here that $\mf h$ is reductive.

	Beware however it need \emph{not} be (semi)simple, even if $\mf g$ is: e.g. if $A \in \mf t_{\reg}$ then $\mf h = \mf t$ is even abelian.
\end{remark}

Denote $\Phi_{\mf h} = \Phi(\mf h,\mf t) \sse \Phi_{\mf g}$, which is the subset of roots vanishing on $A$---as in~\eqref{eq:root_system_levi}.
Then, up to repeating all constructions by replacing $\mf g$ with $\mf h$ (and keeping $\mf t$), to understand~\eqref{eq:deformation_space_for_coefficients_2} it is enough to study the space
\begin{equation}
	\label{eq:true_complement}
	\bm B(\Phi_{\mf h},\Phi_{\mf g}) \ceqq \Ker(\Phi_{\mf h}) \bigsm \bigcup_{\Phi_{\mf g} \sm \Phi_{\mf h}} \Ker(\alpha) \sse \mf t.
\end{equation}

Later we will show~\eqref{eq:true_complement} is never empty, so indeed it is a hyperplane complement: it is obtained by ``restricting'' the hyperplane arrangement of $\Phi_{\mf g} \sm \Phi_{\mf h}$ to $\Ker(\Phi_{\mf h}) \sse \mf t$.
Note this generalises $\mf t_{\reg}$, which in turn corresponds to the generic case $\Phi_{\mf h} = \vn$.
In particular we do \emph{not} expect that the factors of $\Gamma_Q$ will be pure braid groups of Lie algebras, and indeed there is a counterexample in type $D$ (cf. \S~\ref{sec:type_D}).

\begin{remark}[Dimensions]
	One has
	\begin{equation}
		\label{eq:dimension_kernel}
		\dim \bigl( \Ker(\Phi_{\mf h}) \bigr) = \rk(\mf g) - \rk(\Phi_{\mf h}) = \rk(\mf g) - \rk(\mf h'),
	\end{equation}
	where $\mf h' = [\mf h,\mf h] \sse \mf h$ is the semisimple part.
\end{remark}

\subsection{Fission: Dynkin diagrams}

Let again $\mf h \sse \mf g$ be the centraliser of an element $A \in \mf t$.
Then $\Phi_{\mf h} \sse \Phi_{\mf g}$ is a Levi subsystem, and it is known that for all such there exists a base of simple roots $\Delta_{\mf g} \sse \Phi_{\mf g}$ such that $\Phi_{\mf g}$ corresponds to a subdiagrams of the Dynkin diagram of $\Phi_{\mf g}$.

\vspace{5pt}

Namely, consider the following subspace of the complex plane:
\begin{equation}
	\mf C = \Set{ \lambda \in \mb C | \on{Re}(\lambda) \geq 0 \, ; \text{ if } \on{Re}(\lambda) = 0 \text{ then } \on{Im}(\lambda) \geq 0 }.
\end{equation}
Note the identities $\mf C \cup (-\mf C) = \mb C$ and $\mf C \cap (-\mf C) = \set{0}$ are a natural ``complexification'' of the analogous one for the subspace $\mb R_{\geq 0} \sse \mb R$.
Building on this, one can prove the following subspace is a fundamental domain for the action of the Weyl group, for any choice of a base $\Delta_{\mf g} \sse \Phi_{\mf g}$:
\begin{equation}
	\mf C_{\Delta_{\mf g}} = \Set{ A' \in \mf t | \Braket{ \theta | A' } \in \mf C \text{ for all } \theta \in \Delta_{\mf g} } \sse \mf t.
\end{equation}
It is thus a natural ``complexification'' of the $\Delta_{\mf g}$-dominant Weyl chamber---in the real part of the Cartan subalgebra~\cite{collingwood_mcgovern_1993_nilpotent_orbits_in_semisimple_lie_algebras}.
(Note by definition $\mf Z_{\mf g} \sse \mf C_{\Delta_{\mf g}}$, and the Weyl group acts trivially there.)

Hence, up to acting via the Weyl group on the choice of base, we can assume that $A \in \mf C_{\Delta_{\mf g}}$.
In turn one can now prove that the subset $\Delta_{\mf h} \ceqq \Delta_{\mf g} \cap \set{A}^{\perp} \sse \Phi_{\mf h}$ is a base for $\Phi_{\mf h}$.
This is essentially equivalent to proving that $\mf h$ is the Levi factor of the (standard) parabolic subalgebra of $\mf g$ corresponding to $\Delta_{\mf h} \sse \Delta_{\mf g}$, cf.~\cite[Prop.~5.6]{crooks_2019_complex_adjoint_orbits_in_lie_theory_and_geometry}.

Hence in brief $\Phi_{\mf h}$ admits a base given by the simple roots of $\Phi_{\mf g}$ which vanish on $A$.

\begin{remark}
	We can rewrite~\eqref{eq:true_complement} using $\Ker(\Phi_{\mf h}) = \Ker(\Delta_{\mf h})$. (This is the centre of $\mf h$.)

	On the contrary, it is \emph{not} enough to remove the hyperplanes of $\Delta_{\mf g} \sm \Delta_{\mf h}$; rather those of $\Phi_{\mf g}^+ \sm \Phi_{\mf h}^+ \sse \Phi_{\mf g} \sm \Phi_{\mf h}$, where $\Phi_{\mf h}^+ = \Phi_{\mf g}^+ \cap \set{A}^{\perp} \sse \Phi_{\mf h}$---which is a system of positive roots for $(\mf h,\mf t)$.
\end{remark}

It follows that the Dynkin diagram $\mc D_{\mf h}$ of $(\Phi_{\mf h},\Delta_{\mf h})$ is obtained by choosing a subset of nodes of the Dynkin diagram $\mc D_{\mf g}$ of $(\Phi_{\mf g},\Delta_{\mf g})$, keeping all edges among them (and their decoration, i.e. possible doubling/tripling and orientation).
Repeating this procedure at each step, as in~\cite[Lem.~3.2.5]{calaque_felder_rembado_wentworth_2024_wild_orbits_and_generalised_singularity_modules}, finally yields a nested sequence of Dynkin diagrams:
\begin{equation}
	\label{eq:sequence_dynkin}
	\mc D_{\mf h_1} \sse \dm \sse \mc D_{\mf h_p} \sse \mc D_{\mf g}.
\end{equation}
Namely, at each step one finds the complete subdiagram on a subset of nodes (up to relabeling them for a new choice of basis).
In this viewpoint, ``fission'' refers to how a connected component of $\mc D_{\mf h_{i+1}}$ breaks into connected components of $\mc D_{\mf h_i}$.

Importantly, this will enable the classification of \S\S~\ref{sec:type_A}--\ref{sec:type_D}.
Namely denote as customary $A_n$, $B_n$, $C_n$ and $D_n$ the irreducible rank-$n$ root systems of the simple Lie algebras of classical type.
Let then $\mc D_{A_n}$, $\mc D_{B_n}$, $\mc D_{C_n}$ and $\mc D_{D_n}$ be their Dynkin diagrams with respect to the standard bases.
Then:
\begin{itemize}
	\item all components of a Dynkin subdiagram $\mc D \sse \mc D_{A_n}$ are of type $A$;

	\item at most one component of a Dynkin subdiagram $\mc D \sse \mc D_{B_n}$ (resp. $\mc D \sse \mc D_{C_n}$, $\mc D \sse \mc D_{D_n}$) is of type $B$ (resp. $C$, $D$), and the others are of type $A$.
\end{itemize}

We will thus be able to encode a sequence such as~\eqref{eq:sequence_dynkin} into a decorated tree.
Roughly, each node at the $i$-th level of the tree will correspond to a component of the Dynkin diagram $\mc D_{\mf h_i}$---with some subtlety, already treated in type $A$.

\section{General results}
\label{sec:general_results}

Before jumping into the classification we will prove a few abstract results, building on the material of the previous section.

\subsection{Hyperplane arrangements}

Let us start from some general observation.

Given any root subsystem $\Phi \sse \Phi_{\mf g}$ let $U \ceqq \Ker(\Phi) \sse \mf t$.
It is natural to ask whether the ``restricted'' hyperplane arrangement
\begin{equation}
	\label{eq:restricted_arrangement_general}
	\mc{H} = \Set{ \Ker(\alpha) \cap U = \Ker\bigl( \eval[1]{\alpha}_U \bigr)
		| \alpha \in \Phi_{\mf g} \sm \Phi } \sse \mb P \bigl( U^{\dual} \bigr)
\end{equation}
is crystallographic, i.e. if it comes from a root system: our results below imply this is false even when $\mf g$ is simple, and $\Phi$ is a Levi subsystem.
The basic obstruction of course is that $\Phi_{\mf g} \sm \Phi \sse \Phi_{\mf g}$ is \emph{not} a root (sub)system in general.

\begin{remark}
	Still writing $U = \Ker(\Phi)$, note the set
	\begin{equation}
		\label{eq:restricted_root_subsystem}
		\eval[1]{\Phi_{\mf g}}_U = \Set{\eval[1]{\alpha}_U | \alpha \in \Phi_{\mf g} } \sse U^{\dual}
	\end{equation}
	is naturally identified with the quotient $\Phi_{\mf g} \bs U^{\perp} \sse \mf t^{\vee} \bs U^{\perp}$.

	In turn
	\begin{equation*}
		U^{\perp} = \bigl(\Ker(\Phi)\bigr)^{\perp} = \spann_{\mb C}(\Phi) \sse \mf t^{\dual},
	\end{equation*}
	so this is the same as considering the quotient set $\Phi_{\mf g} \bs \spann_{\mb C}(\Phi) \sse \mf t^{\dual} \bs \spann_{\mb C}(\Phi)$.
\end{remark}

\begin{remark}
	Studying the reflection group of~\eqref{eq:restricted_arrangement_general} goes towards the \emph{full/nonpure} local WMCGs, which will be defined elsewhere (cf. \S~\ref{sec:outlook}).
	(This is subtler than simply restricting the reflections associated with $\alpha \in \Phi_{\mf g} \sm \Phi$ to $U$, even in type $A$.)
\end{remark}

One of the insights of this work is that such restrictions/quotients of root systems, and their hyperplane arrangements, naturally arise in the theory of isomonodromic deformations for wild connections on principal bundles.

\subsection{Reduction to the simple case}
\label{sec:simple_is_enough}

Suppose there is a decomposition $\mf g = \bops_i \mf I_i$ into mutually commuting ideals $\mf I_i \sse \mf g$, and let $\mf t_i = \mf t \cap \mf I_i$---a Cartan subalgebra of $\mf I_i$.
There is then a second decomposition $\mf t = \bops_i \mf t_i$, which induces an analogous one on the dual $\mf t^{\dual} \simeq \bops^{\perp}_i \mf t_i^{\dual}$, by identifying $\mf t_i^{\dual}$ with the subspace
\begin{equation*}
	\bigcap_{j \neq i} \mf t_j^{\perp} = \bigl( \mf t \ominus \mf t_i \bigr)^{\perp} \sse \mf t^{\dual}, \qquad \mf t \ominus \mf t_i \ceqq \bops_{j \neq i} \mf t_j.
\end{equation*}

Denote now $\Phi_{\mf I_i} = \Phi(\mf I_i,\mf t_i)$, so there is a splitting of root systems
\begin{equation}
	\label{eq:decomposition_root_system}
	(\mf t,\Phi_{\mf g}) \simeq \bops_i (\mf I_i,\Phi_{\mf I_i}).
\end{equation}

Finally, for any root subsystem $\Phi \sse \Phi_{\mf g}$ set
\begin{equation*}
	\Phi^{(i)} \ceqq \Phi \cap \mf t_i^{\dual} \sse \Phi,
\end{equation*}
finding a disjoint union $\Phi = \coprod_i \Phi^{(i)}$.
Then the subset $\Phi^{(i)} \sse \Phi_{\mf I_i} $ is also a root subsystem, cf.~\cite[Ch.~VI, \S~1.2]{bourbaki_1968_groupes_et_algebres_de_lie_chapitres_4_6}.

\begin{proposition}
	In the notation of~\eqref{eq:true_complement}, there is a product decomposition
	\begin{equation*}
		\bm B(\Phi,\Phi_{\mf g}) = \prod_i \bm B \bigl( \Phi^{(i)},\Phi_{\mf I_i} \bigr) \sse \mf t.
	\end{equation*}
\end{proposition}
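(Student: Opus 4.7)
The plan is to unwind both sides directly, using the product decomposition of the root system~\eqref{eq:decomposition_root_system} to separate the kernel from the removed hyperplanes. The proposition is essentially a bookkeeping statement, and the only thing one needs to verify is that every ingredient defining $\bm B(\Phi,\Phi_{\mf g})$ respects the splitting $\mf t = \bops_i \mf t_i$.

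First I would show that $\Ker(\Phi) = \bops_i \Ker\bigl(\Phi^{(i)}\bigr)$ as subspaces of $\mf t = \bops_i \mf t_i$, where each summand is regarded as a subspace of $\mf t_i$. Indeed, any root $\alpha \in \Phi^{(i)}$ vanishes identically on $\bops_{j \neq i} \mf t_j$ by definition, so $\Ker(\alpha) = \bigl( \mf t \ominus \mf t_i \bigr) \oplus \Ker\bigl( \eval[1]{\alpha}_{\mf t_i} \bigr)$; intersecting over $\alpha \in \Phi$ and invoking the disjoint union $\Phi = \coprod_i \Phi^{(i)}$ gives the claim.

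Next I would repeat the same observation for the complement. The decomposition~\eqref{eq:decomposition_root_system} also forces $\Phi_{\mf g} \sm \Phi = \coprod_i (\Phi_{\mf I_i} \sm \Phi^{(i)})$, and for each $\alpha \in \Phi_{\mf I_i} \sm \Phi^{(i)}$ the intersection $\Ker(\alpha) \cap \Ker(\Phi)$ only constrains the $\mf t_i$-component of a vector, leaving the other components free. Hence an element $X = \sum_i X_i \in \Ker(\Phi)$ avoids $\Ker(\alpha)$ if and only if its $i$-th component $X_i \in \Ker\bigl(\Phi^{(i)}\bigr)$ avoids $\Ker\bigl( \eval[1]{\alpha}_{\mf t_i} \bigr)$. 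Taking unions and complements componentwise yields precisely $\bm B(\Phi,\Phi_{\mf g}) = \prod_i \bm B\bigl(\Phi^{(i)},\Phi_{\mf I_i}\bigr)$.

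There is no real obstacle here; the only point demanding a hint of care is that the complement $\Phi_{\mf g} \sm \Phi$ itself fails to be a root system in general, yet it still decomposes as a disjoint union of its intersections with the $\mf t_i^{\dual}$—this is a consequence of~\eqref{eq:decomposition_root_system}, not of any sub-root-system property, and it is exactly what makes the product decomposition work at the level of the topologically interesting space~\eqref{eq:true_complement} rather than only at the level of $\Ker(\Phi)$.
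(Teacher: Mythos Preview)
Your proof is correct and follows essentially the same approach as the paper: both arguments rest on the identity $\Ker(\alpha) = (\mf t \ominus \mf t_i) \oplus \Ker\bigl(\eval[1]{\alpha}_{\mf t_i}\bigr)$ for $\alpha \in \Phi_{\mf I_i}$, use the partition $\Phi_{\mf g} \sm \Phi = \coprod_i (\Phi_{\mf I_i} \sm \Phi^{(i)})$, and then intersect the resulting product decompositions of $\Ker(\Phi)$ and of the hyperplane-complement piece. Your closing remark about $\Phi_{\mf g} \sm \Phi$ not being a root subsystem is a nice clarification but is not needed for the argument.
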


\begin{proof}
	If $\alpha \in \Phi_{\mf I_i} \sse \Phi_{\mf g}$ then
	\begin{equation*}
		\Ker(\alpha) = (\mf t \ominus \mf t_i) \ops \Ker(\alpha_i) \sse \mf t, \qquad \alpha_i \ceqq \eval[1]{\alpha}_{\mf I_i} \in \mf t_i^{\dual}.
	\end{equation*}
	Intersecting along the partition $\Phi = \coprod_i \Phi^{(i)}$ then yields
	\begin{align*}
		\Ker(\Phi) & =
		\prod_i \Biggl( \bigcap_{\Phi^{(i)}} \Ker(\alpha_i) \Biggr) \sse \prod_i \mf t_i = \mf t.
	\end{align*}

	Analogously
	\begin{equation*}
		\bigcap_{\Phi_{\mf g} \sm \Phi} \bigl( \mf t \sm \Ker(\alpha) \bigr)
		= \prod_i \Biggl( \bigcap_{\Phi_{\mf I_i} \sm \Phi^{(i)}} \bigl( \mf t_i \sm \Ker(\alpha_i) \bigr) \Biggr) \sse \prod_i \mf t_i,
	\end{equation*}
	and the statement follows by intersecting the two.
\end{proof}

Applying this to a filtration of root subsystems yields a product decomposition for pure local WMCGs of type $\mf g$, into pure local WMCGs of type $\mf I_i$.
In particular the splitting $\mf g = \mf Z_{\mf g} \ops \bops_i \mf I_i$, of a reductive Lie algebra into its simple ideals and its centre, means it is enough to work with \emph{simple} Lie algebras---as $\Gamma_Q$ is trivial in the abelian case.

\vspace{5pt}

\begin{center}
	\textbf{Hereafter we will thus assume $\mf g$ to be simple.}
\end{center}

\subsection{Nonempty complements}

Choose again $A \in \mf t$, and let $\mf h \sse \mf g$ be the centraliser.

\begin{lemma}
	\label{lem:nonempty_complement}
	The complement~\eqref{eq:true_complement} is nonempty.
\end{lemma}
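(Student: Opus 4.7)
The plan is to exhibit a concrete witness for nonemptiness, namely the element $A \in \mf t$ that was used in the first place to define $\mf h$. By construction $\mf h = \Ker(\ad_A)$, and by (the base case of) Lem.~\ref{lem:root_subsystems}, specifically the identity~\eqref{eq:root_system_levi} applied to $A$ in place of $A_p$, one has $\Phi_{\mf h} = \Phi_{\mf g} \cap \set{A}^{\perp}$. Hence $\alpha(A) = 0$ for every $\alpha \in \Phi_{\mf h}$, whence $A \in \Ker(\Phi_{\mf h})$; conversely $\alpha(A) \neq 0$ for every $\alpha \in \Phi_{\mf g} \sm \Phi_{\mf h}$, whence $A \notin \Ker(\alpha)$ for any such root. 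Combining these two observations directly yields $A \in \bm B(\Phi_{\mf h},\Phi_{\mf g})$.

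A slightly more conceptual variant (which does not single out the specific $A$) runs as follows. Since $\Phi_{\mf h}$ is a Levi subsystem one has $\Phi_{\mf h} = \Phi_{\mf g} \cap \spann_{\mb C}(\Phi_{\mf h})$, so every root $\alpha \in \Phi_{\mf g} \sm \Phi_{\mf h}$ restricts to a \emph{nonzero} linear functional on the subspace $\Ker(\Phi_{\mf h}) \sse \mf t$. Thus each intersection $\Ker(\alpha) \cap \Ker(\Phi_{\mf h})$ is a proper hyperplane of $\Ker(\Phi_{\mf h})$, and since $\Phi_{\mf g}$ is finite, a finite union of proper linear subspaces of $\Ker(\Phi_{\mf h})$ cannot exhaust it; in fact the complement is Zariski-open and dense. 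This second form has the advantage of confirming the claim made just before the lemma, namely that~\eqref{eq:true_complement} genuinely is the complement of a (finite) hyperplane arrangement inside $\Ker(\Phi_{\mf h})$.

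There is no real obstacle here: the substantive work was done in \S~\ref{sec:fission}, identifying $\Phi_{\mf h}$ as a Levi subsystem of $\Phi_{\mf g}$. The role of the lemma is rather a foundational one, licensing us to speak in the sequel of the fundamental group of~\eqref{eq:true_complement} as a fundamental group of a hyperplane complement.
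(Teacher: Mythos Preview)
Both arguments are correct. Your second variant is essentially the paper's own proof: the paper argues by contradiction that for each $\beta \in \Phi_{\mf g} \sm \Phi_{\mf h}$ one cannot have $\Ker(\Phi_{\mf h}) \sse \Ker(\beta)$, since that would force $\beta \in \spann_{\mb C}(\Phi_{\mf h})$ and hence $\beta(A) = 0$, contradicting $\beta \notin \Phi_{\mf h}$. You have made explicit the final step (a finite union of proper hyperplanes cannot exhaust a complex vector space), which the paper leaves implicit.

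Your first variant is genuinely different and in fact more direct: rather than showing each restricted hyperplane is proper and then invoking a covering argument, you simply observe that the defining element $A$ itself lies in the complement. This short-circuits the contradiction entirely and gives an explicit witness. The trade-off is that the paper's (and your second) argument yields slightly more, namely that the complement is Zariski-open and dense in $\Ker(\Phi_{\mf h})$, which justifies calling~\eqref{eq:true_complement} a hyperplane complement; your first argument establishes nonemptiness only. Either is perfectly adequate for the lemma as stated.
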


\begin{proof}
	Suppose $\beta \in \Phi_{\mf g} \sm \Phi_{\mf h}$, and by contradiction $\Ker(\Phi_{\mf h}) \sse \Ker(\beta)$.
	This happens if and only if $\mb C \beta \sse \mb C \Phi_{\mf h}$, which implies $\beta(A) = 0$: this is absurd, as $\Phi_{\mf h} \sse \Phi_{\mf g}$ is the subset of roots vanishing on $A$.
\end{proof}

\begin{remark}
	The statement of Lem.~\ref{lem:nonempty_complement} is false for general root subsystems $\Phi \sse \Phi_{\mf g}$.
	E.g. the subsystem of short/long roots inside the root system of type $G_2$ yields an empty complement.
	This corresponds to the proper inclusion $A_2 \sse G_2$, which in turn does \emph{not} correspond to an inclusion of the (finite) Dynkin diagrams.

	Again, the point is that here we have Levi subsystems, i.e. the inclusion $\Phi_{\mf h} \sse \spann_{\mb C}(\Phi_{\mf h}) \cap \Phi_{\mf g}$ is an equality.
\end{remark}

\subsection{Descending ranks}
\label{sec:descending_ranks}

If $\rk{\mf h'} = \rk{\mf g}$, it follows by~\eqref{eq:dimension_kernel} that the complement~\eqref{eq:true_complement} is homotopically trivial---and nonempty by Lem.~\ref{lem:nonempty_complement}.
Hence to have a nontrivial fundamental group we need the rank to diminish at each step, and we find that:
\begin{corollary}
	The number of nontrivial factors of~\eqref{eq:pure_local_WMCG_factors} is at most $\rk(\mf g)$
\end{corollary}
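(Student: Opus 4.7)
The plan is a short pigeonhole argument on top of the two ingredients already established in this section. The key observation is that each nontrivial factor of~\eqref{eq:pure_local_WMCG_factors} forces a \emph{strict} rank jump in the chain~\eqref{eq:root_filtration}, after which the bound is immediate.

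I would first pin down precisely when the factor $\bm B_i$ of~\eqref{eq:deformation_space_for_coefficients_2} has nontrivial fundamental group. If $\Phi_i = \Phi_{i+1}$ then no hyperplane is removed, so $\bm B_i = \Ker(\Phi_i)$ is contractible. If instead $\rk(\Phi_i) = \rk(\mf g)$ then $\Ker(\Phi_i) = (0)$ by~\eqref{eq:dimension_kernel}, and $\bm B_i$ reduces to a point (nonempty thanks to Lem.~\ref{lem:nonempty_complement}). Hence a nontrivial factor forces the proper inclusion $\Phi_i \subsetneq \Phi_{i+1}$ together with $\rk(\Phi_i) < \rk(\mf g)$.

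Then I would leverage the Levi-subsystem structure of the filtration: by Lem.~\ref{lem:root_subsystems} and the discussion of \S~\ref{sec:fission}, each $\Phi_i = \Phi_{\mf h_i}$ is a Levi subsystem of $\Phi_{\mf g}$, hence saturated in the sense that $\Phi_i = \spann_{\mb C}(\Phi_i) \cap \Phi_{\mf g}$ (indeed $\Phi_i = \Phi_{\mf g} \cap \set{A_i,\dc,A_p}^{\perp}$ is of the form $\Phi_{\mf g} \cap V$ for a linear subspace $V \sse \mf t^{\dual}$). This saturation upgrades any proper inclusion $\Phi_i \subsetneq \Phi_{i+1}$ of Levi subsystems to a strict inequality $\spann_{\mb C}(\Phi_i) \subsetneq \spann_{\mb C}(\Phi_{i+1})$, and thus to $\rk(\Phi_i) < \rk(\Phi_{i+1})$: otherwise equality of spans would force $\Phi_i = \spann_{\mb C}(\Phi_i) \cap \Phi_{\mf g} = \spann_{\mb C}(\Phi_{i+1}) \cap \Phi_{\mf g} = \Phi_{i+1}$. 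Combining with the previous step, each nontrivial factor of~\eqref{eq:pure_local_WMCG_factors} contributes a strict jump in the weakly increasing sequence
\begin{equation*}
	0 \leq \rk(\Phi_1) \leq \rk(\Phi_2) \leq \dc \leq \rk(\Phi_{p+1}) = \rk(\Phi_{\mf g}) = \rk(\mf g),
\end{equation*}
where the last equality uses that $\mf g$ is simple. Since this integer-valued sequence has total variation at most $\rk(\mf g)$, it admits at most $\rk(\mf g)$ strict jumps, which yields the bound.

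The only genuinely nontrivial ingredient is the saturation property of Levi subsystems, which is precisely what prevents proper inclusions of root subsystems from sharing the same linear span; once that is granted, the rest is bookkeeping already implicit in the paragraph preceding the statement.
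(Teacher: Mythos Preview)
Your proof is correct and follows the same rank-jump counting idea that the paper sketches in the paragraph immediately preceding the corollary. The paper's argument is terser: it only records that if $\rk(\mf h') = \rk(\mf g)$ then the complement is a point, and then asserts that ``the rank must diminish at each step''; you make this last assertion rigorous by invoking the saturation property $\Phi_i = \spann_{\mb C}(\Phi_i) \cap \Phi_{\mf g}$ of Levi subsystems to convert the proper inclusion $\Phi_i \subsetneq \Phi_{i+1}$ into a strict rank jump, which is exactly the missing link. Your observation that $\rk(\Phi_i) = \rk(\mf g)$ forces $\Ker(\Phi_i) = (0)$ (using simplicity of $\mf g$) is the paper's own key input from~\eqref{eq:dimension_kernel}, so the approaches coincide.
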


Note this bound is independent of the pole order $p \geq 1$ of $Q$, and of the pole orders $d_{\alpha}$ of the functions $q_{\alpha} = \alpha \circ Q$.

\subsection{Low-rank cases}
\label{sec:low_rank}

Suppose further $\rk(\mf g) - \rk(\mf h') = 1$.
Then $\Ker(\Phi_{\mf h}) \sse \mf t$ is a line, and since the relative complement cannot be empty it must be homeomorphic to $\mb C \sm \set{0}$; thus:
\begin{corollary}
	\label{cor:rank_jump}
	If $\rk({\mf g}) - \rk(\mf h') = 1$, then the fundamental group of~\eqref{eq:true_complement} is infinite cyclic.
\end{corollary}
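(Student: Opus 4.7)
The plan is to identify the complement~\eqref{eq:true_complement} with $\mb C \sm \set{0}$ as a topological space, from which the conclusion is immediate.

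First I would observe that by the dimension formula~\eqref{eq:dimension_kernel}, the hypothesis $\rk(\mf g) - \rk(\mf h') = 1$ says exactly that the ambient space $U \ceqq \Ker(\Phi_{\mf h}) \sse \mf t$ is a one-dimensional complex vector space, hence abstractly homeomorphic to $\mb C$. So the complement~\eqref{eq:true_complement} is obtained by removing from a copy of $\mb C$ the family of linear subspaces
\begin{equation*}
	U \cap \Ker(\alpha), \qquad \alpha \in \Phi_{\mf g} \sm \Phi_{\mf h}.
\end{equation*}

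Next I would use Lem.~\ref{lem:nonempty_complement} (and its proof) to control these intersections: for each $\alpha \in \Phi_{\mf g} \sm \Phi_{\mf h}$, the restriction $\eval[1]{\alpha}_U$ is \emph{not} identically zero, since otherwise $U = \Ker(\Phi_{\mf h})$ would be contained in $\Ker(\alpha)$ and the complement would be empty. Hence $U \cap \Ker(\alpha)$ is a proper complex subspace of the one-dimensional space $U$, i.e. it equals $\set{0}$. Taking the union over all $\alpha \in \Phi_{\mf g} \sm \Phi_{\mf h}$ still gives $\set{0}$ (which is nonempty only because $\Phi_{\mf g} \sm \Phi_{\mf h}$ is nonempty; if it is empty, the complement is $U \simeq \mb C$ which is simply connected, but in that case $\mf h = \mf g$ forces $\rk(\mf h') = \rk(\mf g)$, contradicting the hypothesis).

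Therefore the complement~\eqref{eq:true_complement} is homeomorphic to $\mb C \sm \set{0}$, whose fundamental group is $\mb Z$. There is no real obstacle here: once the dimension formula pins the ambient space to a line and Lem.~\ref{lem:nonempty_complement} ensures none of the hyperplanes swallow it whole, the statement is forced.
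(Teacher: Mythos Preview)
Your proof is correct and follows essentially the same approach as the paper: identify $\Ker(\Phi_{\mf h})$ as a complex line via~\eqref{eq:dimension_kernel}, then invoke Lem.~\ref{lem:nonempty_complement} to see that removing the root-hyperplanes leaves $\mb C \sm \set{0}$. The paper compresses this into a single sentence preceding the corollary, whereas you spell out the details (including the edge case $\Phi_{\mf g} = \Phi_{\mf h}$, which the paper leaves implicit).
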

This corresponds to the pure braid group of type $A_1$, i.e. the pure braid group on 2 strands.

An easy extension then yields:

\begin{corollary}
	Suppose $\rk(\mf g) = 2$.
	Then $\Gamma_Q$ is isomorphic to $\mb Z$, to $\mb Z^2$, or to the pure $\mf g$-braid group.
\end{corollary}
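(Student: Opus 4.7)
The plan is to combine the reduction to the simple case of \S~\ref{sec:simple_is_enough} with the descending rank bound of \S~\ref{sec:descending_ranks} and a short case analysis of Levi chains. By that reduction together with Rem.~\ref{rem:trivial_deformations}, $\Gamma_Q$ depends only on the semisimple part of $\mf g$ and splits as a product over its simple summands; since the semisimple rank equals two this leaves two subcases: $\mf g \simeq A_1 \ops A_1$, and $\mf g$ simple of rank two (of type $A_2$, $B_2$, or $G_2$). In the first subcase the product decomposition and the semisimple-rank-one classification give $\Gamma_Q \simeq \Gamma_{Q_1} \times \Gamma_{Q_2}$ with each factor trivial or $\mb Z$, hence $\Gamma_Q$ is $1$, $\mb Z$, or $\mb Z^2 \simeq \PB_{A_1 \ops A_1}$.

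For the simple rank-two subcase I would first observe that every proper Levi subsystem of $\Phi_{\mf g}$ is either empty or of the form $\set{\pm \beta}$ for some root $\beta$ (since a rank-two Levi subsystem must span the Cartan and hence equal $\Phi_{\mf g}$), and that two distinct rank-one Levis are pairwise incomparable. Consequently any chain $\Phi_1 \sse \dm \sse \Phi_{p+1} = \Phi_{\mf g}$ has weakly increasing ranks and, by the bound of \S~\ref{sec:descending_ranks}, admits at most two strict inclusions. The key auxiliary step is then a uniform ``rank-one jump gives $\mb Z$'' principle, mildly generalising Cor.~\ref{cor:rank_jump}: if $\Phi_i \subsetneq \Phi_{i+1}$ with $\rk(\Phi_{i+1}) = \rk(\Phi_i) + 1$, then every $\alpha \in \Phi_{i+1} \sm \Phi_i$ lies outside $\spann_{\mb C}(\Phi_i)$, so by a dimension count $\Ker(\alpha) \cap \Ker(\Phi_i) = \Ker(\Phi_{i+1})$ is the only removed hyperplane in $\Ker(\Phi_i)$; hence $\bm B_i = \Ker(\Phi_i) \sm \Ker(\Phi_{i+1})$ deformation-retracts onto $S^1$, giving $\pi_1(\bm B_i) \simeq \mb Z$.

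The remaining step is a clean enumeration of the shapes of Levi chains via~\eqref{eq:pure_local_WMCG_factors}: no strict inclusion gives all factors contractible and $\Gamma_Q = 1$; a single rank-two jump is necessarily $\vn \subsetneq \Phi_{\mf g}$, contributing the factor $\mf t_{\reg}$ and so $\Gamma_Q \simeq \PB_{\mf g}$; a single rank-one jump $\set{\pm \beta} \subsetneq \Phi_{\mf g}$ contributes one factor $\mb Z$ by the principle above; and two rank-one jumps, necessarily $\vn \subsetneq \set{\pm \beta} \subsetneq \Phi_{\mf g}$, contribute two factors each isomorphic to $\mb Z$, giving $\Gamma_Q \simeq \mb Z^2$. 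The chief obstacle is the rank-one jump principle above; once it is in hand, the case split combined with the bound on the number of strict inclusions makes the rest a matter of bookkeeping.
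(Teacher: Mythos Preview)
Your proof is correct and follows the paper's approach: enumerate the possible Levi chains in rank two and use the rank-jump argument on each strict inclusion. Two minor remarks: the $A_1 \ops A_1$ subcase is superfluous, since by this point in the paper $\mf g$ is already assumed simple (so $\rk(\mf g) = 2$ means simple of rank two); and your explicit ``rank-one jump gives $\mb Z$'' principle is a welcome sharpening of what the paper leaves implicit, since Cor.~\ref{cor:rank_jump} as literally stated (requiring $\Ker(\Phi_i)$ to be a line) only handles the step $\set{\pm\beta} \subsetneq \Phi_{\mf g}$, whereas the step $\vn \subsetneq \set{\pm\beta}$ (where $\Ker(\Phi_i) = \mf t$ is two-dimensional) needs exactly the Levi/span argument you supply.
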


\begin{proof}
	The possible filtrations of Levi subsystems are listed as
	\begin{equation*}
		\vn \sse \Phi_{\mf g}, \quad \Phi_{\mf h} \sse \Phi_{\mf g}, \quad \vn \sse \Phi_{\mf h} \sse \Phi_{\mf g},
	\end{equation*}
	with $\rk(\Phi_{\mf h}) = 1$.
	The first (generic) one leads to the pure $\mf g$-braid group, and the other two are controlled by Cor.~\ref{cor:rank_jump}.
\end{proof}

For instance this completely classifies $\Gamma_Q$ in the exceptional type $G_2$.

\section{Type A}
\label{sec:type_A}

Starting from the section, we will explicitly describe the pure local WMCGs of classical type.

\vspace{5pt}

Let $n \geq 1$ be an integer, and $\mf g = \mf{sl}_{n+1}(\mb C)$.
The standard Cartan subalgebra of traceless diagonal matrices is naturally a subspace of $V \ceqq \mb C^{n+1}$.
We will use the shorthand notation $\ul k \ceqq \set{1,\dc,k}$, for an integer $k \geq 1$, in all that follows.

Denote then $e_1,\dc,e_{n+1} \in V$ the vectors of the canonical basis, and $\alpha_i = e_i^{\dual} \in V^{\dual}$ the associated dual coordinates.
Then we write
\begin{equation*}
	\alpha^-_{ij} \ceqq \alpha_i - \alpha_j, \qquad i \neq j \in \ul{n+1},
\end{equation*}
so that the root system is
\begin{equation*}
	A_n = \Set{ \pm \alpha^-_{ij} | i < j \in \ul{n+1}} \sse \mf t^{\dual},
\end{equation*}
with standard basis $\Delta_{\mf g} = \set{ \theta_1,\dc,\theta_n }$, $\theta_i = \alpha^-_{i,i+1}$~\cite[Ch.~VI, \S~4.7]{bourbaki_1968_groupes_et_algebres_de_lie_chapitres_4_6}.

\subsection{Dynkin diagrams}

Choose an element $A \in \mf t$ and let $\Phi_{\mf h} \sse A_n$ be the Levi subsystem of its centraliser $\mf h \sse \mf g$.
Reasoning as in \S~\ref{sec:fission}, we can assume it has base $\Delta_{\mf h} = \Set{ \theta \in \Delta_{\mf g} | \Braket{\theta | A } = 0 }$, which yields a subdiagram
\begin{equation*}
	\mc D_{\mf h} \sse \mc D_{A_n} = \dynkin[labels={\theta_1,\theta_2,\theta_{n-1},\theta_n},scale=2] A{}.
\end{equation*}
Keeping all edges (among adjacent nodes) results in a disjoint union of connected components
\begin{equation*}
	\mc D_{\mf h} = \coprod_i \mc D_i \sse \mc D_{A_n}.
\end{equation*}
All components $\mc D_i$ are of type $A$: a component on $k \geq 1$ nodes corresponds to an irreducible root subsystem $A_k \sse \Phi_{\mf h}$, whose simple roots form an unbroken string of length $k$ inside $\Delta_{\mf g}$.
Consider then the subset
\begin{equation*}
	J' \ceqq \Set{ i \in \ul n | \text{ there is an unbroken string of maximal length starting at } \theta_i }.\fn{
		Note that ``starting at'' relies on the natural ordering of nodes from left to right (as the Dynkin diagram is not oriented per se). Moreover, if such an unbroken string exists, then it is unique.}
\end{equation*}
For $i \in J'$ we find an irreducible component $A_{I'_i} \sse \Phi_{\mf h}$ of rank $\abs{I_i'} > 0$.

This results in a partition $\Delta_{\mf h} = \coprod_{i \in J'} I'_i$, and it will be helpful to introduce the following versatile terminology:

\begin{definition}
	If $S$ and $J$ are finite sets, a $J$-\emph{partition} of $S$ is a surjection $\phi \cl S \thra J$.
	(This is the same as giving a partition $S = \coprod_J I_j$ indexed by $J$, with nonempty parts $I_j \ceqq \phi^{-1}(j) \sse S$.)
\end{definition}

We thus have a $J'$-partition $\Delta_{\mf h} \thra J'$.
This does not quite control the complement~\eqref{eq:true_complement}, since one must take into account the roots of $A_n$ that have been left out, as we now set out to do.
(While the discussion can be slightly simplified in type $A$, we keep it going for the sake of streamlining the exposition through all classical types.)

\subsection{Kernels}

The space~\eqref{eq:true_complement} is controlled by a partition ``extending'' $\Delta_{\mf h} \thra J'$.
To clarify this, for $i \in \ul{n+1}$ define the subset
\begin{equation}
	\label{eq:second_partition}
	I_i = I_i^{\mf h} \ceqq \set{i} \cup \Set{ j \in \ul{n+1} | \pm \alpha^-_{ij} \in \Phi_{\mf h} } \sse \ul{n+1}.
\end{equation}

\begin{lemma}
	\label{lem:second_partition}
	The subsets~\eqref{eq:second_partition} provide a $J$-partition $\ul{n+1} \thra J$, and the set of parts $J$ has cardinality
	\begin{equation*}
		\abs J = n+1 - \rk(\mf{h'}).
	\end{equation*}
\end{lemma}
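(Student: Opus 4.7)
My plan is to recognise that the binary relation on $\ul{n+1}$ defined by $i \sim j \Leftrightarrow j \in I_i^{\mf h}$ is an equivalence relation, whose equivalence classes are exactly the $I_i^{\mf h}$; the counting part will then follow from identifying $\Phi_{\mf h}$ explicitly with a product of type-$A$ root systems indexed by the blocks of the partition.

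\textbf{Step 1: $\sim$ is an equivalence relation.} Reflexivity is by definition, and symmetry holds because $\alpha^-_{ij}=-\alpha^-_{ji}$ and $\Phi_{\mf h}$ is closed under negation. For transitivity, take distinct $i,j,k\in\ul{n+1}$ with $\alpha^-_{ij},\alpha^-_{jk}\in\Phi_{\mf h}$, so that $\alpha^-_{ik}=\alpha^-_{ij}+\alpha^-_{jk}\in\spann_{\mb C}(\Phi_{\mf h})$. Since $\Phi_{\mf h}\sse A_n$ is a Levi subsystem, the identity $\Phi_{\mf h}=A_n\cap\spann_{\mb C}(\Phi_{\mf h})$ noted in \S~\ref{sec:fission} together with $\alpha^-_{ik}\in A_n$ yields $\alpha^-_{ik}\in\Phi_{\mf h}$. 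Hence $i\sim k$, and the $I_i^{\mf h}$ are the equivalence classes, providing a $J$-partition $\ul{n+1}\thra J$.

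\textbf{Step 2: cardinality of $J$.} Labelling the distinct parts $I_1,\dc,I_r$ with $|I_k|=n_k$ (so $r=|J|$ and $\sum_k n_k=n+1$), one immediately reads off from~\eqref{eq:second_partition} that
\begin{equation*}
	\Phi_{\mf h}=\coprod_{k=1}^r\Set{ \pm\alpha^-_{ij} | i<j\in I_k},
\end{equation*}
which is a root system of type $A_{n_1-1}\times\dm\times A_{n_r-1}$ (with trivial factor for singleton blocks). Therefore
\begin{equation*}
	\rk(\mf h')=\rk(\Phi_{\mf h})=\sum_{k=1}^r(n_k-1)=(n+1)-r,
\end{equation*}
so that $|J|=r=n+1-\rk(\mf h')$, as desired.

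\textbf{Main obstacle.} The only nontrivial point is transitivity in Step~1: it would fail for a general root subsystem (e.g. $A_2\sse G_2$, cf. the remark after Lem.~\ref{lem:nonempty_complement}), and it is precisely the Levi property of $\Phi_{\mf h}$ that makes the argument work. Everything else is routine bookkeeping.
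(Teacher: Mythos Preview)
Your proof is correct. The overall strategy matches the paper's: establish that the $I_i^{\mf h}$ partition $\ul{n+1}$, then count parts by identifying the irreducible components of $\Phi_{\mf h}$.

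The one genuine difference is in how you prove transitivity. The paper argues via root reflections: from $\alpha^-_{ij},\alpha^-_{jk}\in\Phi_{\mf h}$ one gets $\alpha^-_{ik}=\sigma^-_{ij}(\alpha^-_{jk})\in\Phi_{\mf h}$, using only that $\Phi_{\mf h}$ is a root subsystem. You instead invoke the Levi property $\Phi_{\mf h}=A_n\cap\spann_{\mb C}(\Phi_{\mf h})$ together with $\alpha^-_{ik}=\alpha^-_{ij}+\alpha^-_{jk}$. Both are valid, but the paper's argument is slightly sharper: it shows that in type $A$ the partition structure holds for \emph{any} root subsystem, not just Levi ones. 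Your remark that the Levi hypothesis is the crux is therefore a bit misleading in this particular case (though it is exactly the right instinct for other types, as the paper notes later). For the cardinality count, your direct computation $\rk(\Phi_{\mf h})=\sum_k(n_k-1)$ and the paper's incremental ``add a component, lose $k$ parts'' are equivalent rephrasings of the same bookkeeping.
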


\begin{proof}
	First we must show that $I_i \cap I_j \neq \vn$ implies $I_i = I_j$, for $i,j \in \ul{n+1}$.
	This is because the nontrivial root reflections act by
	\begin{equation*}
		\sigma^-_{ij}(\alpha^-_{jk}) = \alpha^-_{ik}, \qquad \sigma^-_{ij} = \sigma_{\alpha^-_{ij}},
	\end{equation*}
	for distinct indices $i,j,k \in \ul{n+1}$, and by hypothesis $\Phi_{\mf h}$ is a root subsystem.

	As for the cardinality of $J$, if $\Phi_{\mf h} = \vn$ the identity is clear; and adding an irreducible component $A_k \sse \Phi_{\mf h}$ reduces it exactly by $k$.
\end{proof}

Now by construction the irreducible component $A_{I'_i} \sse \Phi_{\mf h}$ consists of the set of roots
\begin{equation*}
	\Set{\pm \alpha^-_{jl} | i \leq j < l \leq i + \abs{I_i'} } \sse A_n,
\end{equation*}
for $i \in J'$, and in turn $I_i = \set{i,\dc,i + \abs{I'_i}}$.
Then there is a natural injection $J'\hra J$ which induces a one-to-one correspondence between $I'_i$ and $I_i$.

We can thus also denote $A_{I_i} \sse \Phi_{\mf h}$ the irreducible components, but now the notation makes sense in general: if $I_i = \set{i}$ then $A_{I_i} \simeq A_0$ stands for the trivial (nonspanning) ``rank-zero'' root systems $A_0 = \vn \sse \mb Ce_i$.
This simply means that $\mb Ce_i \sse \Ker(\Phi_{\mf h})$ (see below).\footnote{
	Note $J$ is naturally a subset of $\ul{n+1}$ by mapping $I_j \mt \min(I_j)$.
	In particular it inherits a total order, intrinsically coming from the ordering of the eigenvalues of a (traceless) diagonal matrix.}

We now can describe $\Ker(\Phi_{\mf h})$ in 2 steps: first we consider $\Phi_{\mf h}$ as a Levi subsystem of $\Phi_{\mf{gl}_{n+1}(\mb C)} \sse V^{\dual}$, and then we restrict to the trace-free case.
Concretely, denote $\wt{\Ker}(\Phi_{\mf h}) \sse V$ the kernel in the general linear case, so that $\Ker(\Phi_{\mf h}) = \wt{\Ker}(\Phi_{\mf h}) \cap \mf t$ is what we are after---in the special linear case.

\begin{proposition}
	\label{prop:extended_kernel}
	There is a linear isomorphism $\mb C^J \xra{\simeq} \wt{\Ker}(\Phi_{\mf h})$, given by mapping
	\begin{equation*}
		\ol e_I \lmt e_I \ceqq \sum_{i \in I} e_i \in V,
	\end{equation*}
	for all $I \in J$, where $\ol e_I$ is a vector of the canonical basis of $\mb C^J$.
\end{proposition}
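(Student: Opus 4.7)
The plan is to construct the map explicitly, check it lands in $\wt{\Ker}(\Phi_{\mf h})$, and then verify bijectivity using the combinatorics of the $J$-partition of Lem.~\ref{lem:second_partition}.

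First, I would define the linear map $\varphi \cl \mb C^J \to V$ by $\varphi(\ol e_I) \ceqq e_I = \sum_{i \in I} e_i$ on the canonical basis and extend by linearity. To see that $\varphi(\ol e_I) \in \wt{\Ker}(\Phi_{\mf h})$, pick any $\alpha = \alpha^-_{ij} \in \Phi_{\mf h}$. By definition~\eqref{eq:second_partition} of the partition, the indices $i,j$ lie in a common part $I' \in J$. If $I' \neq I$, then neither $i$ nor $j$ belongs to $I$, so $\alpha(e_I) = 0$; if $I' = I$ then $\alpha(e_I) = 1 - 1 = 0$. Hence $e_I \in \wt{\Ker}(\Phi_{\mf h})$ for every $I \in J$, and by linearity $\varphi$ factors through $\wt{\Ker}(\Phi_{\mf h})$.

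Next, injectivity is immediate: the vectors $\set{e_I}_{I \in J}$ have pairwise disjoint supports in the canonical basis of $V$, since $J$ provides a partition of $\ul{n+1}$; hence they are linearly independent, and $\Ker(\varphi) = 0$.

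The main point is surjectivity. Given $v = \sum_{i = 1}^{n+1} c_i e_i \in \wt{\Ker}(\Phi_{\mf h})$, for every $\alpha^-_{ij} \in \Phi_{\mf h}$ one has $c_i = c_j$. I would then argue that $v$ is constant on each part $I \in J$: for any two distinct indices $j, k \in I$ lying in the same part together with some $i$, the reflection identity $\sigma^-_{ij}(\alpha^-_{ik}) = \alpha^-_{jk}$ (used already in the proof of Lem.~\ref{lem:second_partition}) together with the fact that $\Phi_{\mf h}$ is a root subsystem forces $\alpha^-_{jk} \in \Phi_{\mf h}$, hence $c_j = c_k$. Writing $c_I$ for the common value on $I$, one obtains $v = \sum_{I \in J} c_I e_I = \varphi\bigl( \sum_{I \in J} c_I \ol e_I \bigr)$, so $\varphi$ is surjective onto $\wt{\Ker}(\Phi_{\mf h})$.

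The only subtle step is the constancy on parts in the surjectivity argument, which however is a direct consequence of how the partition~\eqref{eq:second_partition} was defined and of the Levi/root-subsystem property of $\Phi_{\mf h}$; the rest of the proof is bookkeeping.
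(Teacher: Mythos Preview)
Your proof is correct and follows essentially the same approach as the paper, just more explicitly unpacked: the paper splits $V = \bops_{I \in J} \mb C^I$ along the partition, observes that the roots of $\Phi_{\mf h}$ supported on each block $\mb C^I$ form a full type-$A$ system there, and concludes that the kernel in each block is the diagonal line $\mb C e_I$---which is exactly your injectivity/surjectivity argument in compressed form. One small simplification in your surjectivity step: once you pick $i$ with $I = I_i$, every $j \in I \sm \set{i}$ already satisfies $\alpha^-_{ij} \in \Phi_{\mf h}$ by definition~\eqref{eq:second_partition}, so $c_j = c_i$ for all $j \in I$ by transitivity of equality, and the reflection identity is not needed here.
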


\begin{proof}
	By Lem.~\ref{lem:second_partition} there is a splitting
	\begin{equation*}
		V \simeq \bops_{I \in J} \mb C^I, \qquad \mb C^I = \bops_{i \in I} \mb Ce_i,
	\end{equation*}
	and the kernel decomposes accordingly.

	In turn each component contains a full copy of the type-$A$ root system, hence the kernel there is spanned by the line through the vector $e_I \in V$ of the statement.
\end{proof}

In brief the vectors of the canonical basis corresponding to each part $I \in J$ are fused in the kernel.
In the end:
\begin{equation}
	\label{eq:kernel_type_A}
	\Ker(\Phi_{\mf h}) = \Set{ \sum_{I \in J} \lambda_I e_I \in V | \sum_J \lambda_I = 0 } \sse V.
\end{equation}
We compute
\begin{equation*}
	\dim \bigl( \Ker(\Phi_{\mf h}) \bigr) = \abs J - 1 = n - \rk(\mf h'),
\end{equation*}
using Lem.~\ref{lem:second_partition}, in accordance with~\eqref{eq:dimension_kernel}.

\subsection{Restricted subsystem and fundamental group}

Denote $U \ceqq \Ker(\Phi_{\mf h})$.
To conclude we must remove from it the root-hyperplanes corresponding to (positive) roots $\alpha \in A_n \sm \Phi_{\mf h}$, and it turns out this still yields a root system of type $A$.

\begin{theorem}
	\label{thm:restricted_arrangement_type_A}
	Under the isomorphism of Prop.~\ref{prop:extended_kernel} there is an identification of root systems
	\begin{equation*}
		A_d \simeq \eval[1]{A_n}_U \sse U^{\dual},
	\end{equation*}
	where $d = \dim(U)$, using the notation of~\eqref{eq:restricted_root_subsystem}.
\end{theorem}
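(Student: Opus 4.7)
The strategy is to use the explicit basis of $\wt{\Ker}(\Phi_{\mf h})$ furnished by Prop.~\ref{prop:extended_kernel} to compute the restriction map $\alpha \mapsto \eval[1]{\alpha}_U$ on each root of $A_n$, and then recognise the image as the standard realisation of a type-$A$ root system on the trace-free hyperplane of $\mb C^J$.

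More concretely, the plan is as follows. First, I would parameterise $\wt{\Ker}(\Phi_{\mf h})$ by writing an element as $\sum_{I \in J} \lambda_I e_I$, and note that by~\eqref{eq:kernel_type_A} the subspace $U$ consists of those tuples $(\lambda_I)_{I \in J} \in \mb C^J$ with $\sum_{I \in J} \lambda_I = 0$. Then, picking indices $i \neq j \in \ul{n+1}$ with $i \in I$ and $j \in I'$ for some $I, I' \in J$, one reads off directly from the definition of $e_I = \sum_{k \in I} e_k$ that
\begin{equation*}
    \eval[1]{\alpha^-_{ij}}_U\Bigl( \sum_{K \in J} \lambda_K e_K \Bigr) = \lambda_I - \lambda_{I'}.
\end{equation*}
In particular, if $I = I'$---i.e., precisely when $\alpha^-_{ij} \in \Phi_{\mf h}$ by~\eqref{eq:second_partition}---the restriction vanishes, consistently with $U \sse \Ker(\Phi_{\mf h})$. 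For $I \neq I'$, the restriction depends only on the pair of parts $(I,I')$, and not on the choice of representatives $i \in I$, $j \in I'$.

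Next, let $\beta_I \in (\mb C^J)^{\dual}$ be the coordinate dual to $\bar e_I$, so that the above calculation reads $\eval[1]{\alpha^-_{ij}}_U = \eval[1]{(\beta_I - \beta_{I'})}_U$ for $i \in I$, $j \in I' \in J$, $I \neq I'$. Thus the restriction map yields a surjection
\begin{equation*}
    A_n \sm \Phi_{\mf h} \thra \Set{ \pm (\beta_I - \beta_{I'}) | I \neq I' \in J } \sse (\mb C^J)^{\dual},
\end{equation*}
whose image is, tautologically, the standard realisation of the root system of type $A_{\abs J - 1}$ on $\mb C^J$, further restricted to the trace-free hyperplane via the quotient $(\mb C^J)^{\dual} \thra U^{\dual}$. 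Since $\abs J - 1 = \dim(U) = d$ by the dimension count following Prop.~\ref{prop:extended_kernel}, this produces the desired identification $A_d \simeq \eval[1]{A_n}_U$.

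The argument is essentially a direct computation with basis vectors, so there is no real obstacle; the only point requiring care is verifying that distinct roots of $A_n \sm \Phi_{\mf h}$ restricting to the same element of $U^{\dual}$ correspond to the expected ``fusion'' of indices within the same part of $J$, which is exactly the content of Lem.~\ref{lem:second_partition}.
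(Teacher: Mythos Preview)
Your proposal is correct and follows essentially the same route as the paper: introduce the dual coordinates on $\mb C^J$ (the paper writes $\ol\alpha_I = \ol e_I^{\dual}$ where you write $\beta_I$), observe that $\alpha^-_{ij} \in A_n \sm \Phi_{\mf h}$ precisely when $i,j$ lie in different parts of the $J$-partition, and compute that the restriction to $U$ is $\ol\alpha_I - \ol\alpha_{I'}$. Your write-up is in fact more detailed than the paper's, which compresses the computation into a single sentence.
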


\begin{proof}
	Introduce the dual basis $\ol \alpha_I = \ol e_I^{\dual}$ of $(\mb C^J)^{\dual}$.
	By construction $\alpha^-_{ij} \in A_n \sm \Phi_{\mf h}$ if and only $I_i \neq I_j$, and restricting such covectors to~\eqref{eq:kernel_type_A} yields all linear functional $\ol \alpha_I - \ol \alpha_{\wh I} \in U^{\dual}$, with $I \neq \wh I \in J$.
\end{proof}

It follows that $\pi_1\bigl( \bm B(\Phi_{\mf h},\Phi_{\mf g}),A \bigr) \simeq \PB_{d+1}$ in this case.

\subsection{Fission trees}

Finally we can reason recursively: consider a nested Levi subsystem $\Phi_{\wt{\mf h}} \sse \Phi_{\mf h}$.
Splitting into irreducible components, it follows that	the partition $\ul{n+1} \thra \wt J$ (associated with $\Phi_{\wt{\mf h}}$ as in~\eqref{eq:second_partition}) is a refinement of the $J$-partition associated with $\Phi_{\mf h}$.
More precisely, for any $i \in \wt J$ there exists $\phi(i) \in J$ such that $I^{\wt{\mf h}}_i \sse I^{\mf h}_{\phi(i)}$, i.e. there is a (new) $J$-partition $\phi \cl \wt J \thra J$.

Hence a filtration
\begin{equation*}
	\Phi_{\mf h_1} \sse \dm \sse \Phi_{\mf h_p} \sse \Phi_{\mf h_{p+1}} \ceqq A_n
\end{equation*}
of Levi subsystems corresponds to a decreasing sequence of sets
\begin{equation}
	\label{eq:fission_tree_sequence}
	J_1 \lxra{\phi_1} J_2 \lra \dm \lra J_p \lxra{\phi_p} J_{p+1} \ceqq \set{\ast},
\end{equation}
where $J_l$ is the set of parts corresponding to $\Phi_{\mf h_l}$, as in Lem.~\ref{lem:second_partition}---for $l \in \ul p$.
This is the same as considering the disjoint union $\mc T_0 \ceqq \coprod_{l = 1}^{p+1} J_l$, and giving a single function
\begin{equation*}
	\bm \phi \cl \mc T_0 \sm \set{\ast} \lra \mc T_0, \qquad J_l \ni i \lmt \phi_l(i) \in J_{l+1}.
\end{equation*}

\begin{definition}[Fission tree]
	\label{def:fission_tree}
	The \emph{fission tree} $\mc T_Q$ of~\eqref{eq:fission_tree_sequence} is the tree with nodes $\mc T_0$, such that $\bm \phi(i) \in \mc T_0$ is the parent-node of $i \in \mc T_0 \sm \set{\ast}$.
\end{definition}

Hence $J_1 \sse \mc T_0$ are the leaves and $\ast \in J_{p+1}$ is the root, while $\abs{J_l}$ is the number of nodes at level $l \in \ul{p+1}$; note by construction $\abs{J_1} \leq n+1$. (The equality corresponds to $H_1 = T$, in the notation of \S~\ref{sec:fission}.)
Set finally
\begin{equation*}
	k_i \ceqq \abs{ \bm \phi^{-1}(i)} \in \mb Z_{\geq 0}, \qquad i \in \mc T_0,
\end{equation*}
which is the number of child-nodes of $i \in \mc T_0$.\fn{
	All sets $\bm \phi^{-1}(i) \sse \mc T_0$ come with a total order: cf. the previous footnote.}

\begin{theorem}
	\label{thm:tree_gives_wmcg_type_A}
	There is a group isomorphism $\Gamma_Q \simeq \prod_{\mc T_0} \PB_{k_i}$, and conversely pure local WMCGs of type $A$ exhaust finite products of pure braid groups.
\end{theorem}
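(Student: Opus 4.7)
The plan is to combine the product decomposition \eqref{eq:pure_local_WMCG_factors} with Thm.~\ref{thm:restricted_arrangement_type_A} and the reduction to simple Lie algebras of \S~\ref{sec:simple_is_enough}, applied level-by-level along the tree. First I would note that by \eqref{eq:pure_local_WMCG_factors} we have $\Gamma_Q \simeq \prod_{i=1}^{p} \pi_1(\bm B_i, A_i)$, and by \eqref{eq:deformation_space_for_coefficients_2} together with Lem.~\ref{lem:root_subsystems} each factor can be rewritten as $\bm B_i = \bm B(\Phi_{\mf h_i}, \Phi_{\mf h_{i+1}})$, where the latter notation extends \eqref{eq:true_complement} by remembering only the roots in $\Phi_{\mf h_{i+1}} \sm \Phi_{\mf h_i}$ (the roots outside $\Phi_{\mf h_{i+1}}$ already vanish identically on $\Ker(\Phi_{\mf h_i})$ by the Levi property recalled in the proof of Lem.~\ref{lem:nonempty_complement}; alternatively, they contribute trivial factors).

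Next I would apply the reduction of \S~\ref{sec:simple_is_enough} to the reductive Lie algebra $\mf h_{i+1}$, which in type $A$ splits as its centre plus simple ideals of type $A$ indexed by the nodes $j \in J_{i+1}$: concretely, for each such $j$ the simple summand is the copy $A_{I_j} \sse \Phi_{\mf h_{i+1}}$ spanned by the block $I_j \sse \ul{n+1}$, and $\Phi_{\mf h_i} \cap A_{I_j}$ is in turn a Levi subsystem whose induced partition of $I_j$ has exactly $k_j = \abs{\bm \phi^{-1}(j)}$ parts (by the compatibility of the partitions \eqref{eq:second_partition} for nested Levi subsystems). Hence the proposition of \S~\ref{sec:simple_is_enough} yields
\begin{equation*}
    \bm B(\Phi_{\mf h_i}, \Phi_{\mf h_{i+1}}) \simeq \prod_{j \in J_{i+1}} \bm B\bigl( \Phi_{\mf h_i} \cap A_{I_j}, A_{I_j} \bigr),
\end{equation*}
up to a contractible factor from the centre. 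Thm.~\ref{thm:restricted_arrangement_type_A} applied to each pair $(\Phi_{\mf h_i} \cap A_{I_j}, A_{I_j})$ identifies the restricted arrangement with that of $A_{k_j - 1}$, and so $\pi_1\bigl( \bm B(\Phi_{\mf h_i} \cap A_{I_j}, A_{I_j}) \bigr) \simeq \PB_{k_j}$.

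Assembling over $i \in \ul p$ then yields
\begin{equation*}
    \Gamma_Q \simeq \prod_{i=1}^{p} \prod_{j \in J_{i+1}} \PB_{k_j} = \prod_{j \in \mc T_0 \sm J_1} \PB_{k_j} = \prod_{j \in \mc T_0} \PB_{k_j},
\end{equation*}
since the leaves $j \in J_1$ have $k_j = 0$ and $\PB_0$ is trivial. For the converse, given any finite collection $(k_1, \dotsc, k_r)$ with $k_i \geq 2$, I would exhibit a fission tree realising precisely these branching numbers---e.g. build a tree of height $r$ along a single ``spine'', attaching at the $i$-th spine node exactly $k_i$ children, and terminating all non-spine nodes by a single-child chain down to level $1$---and then realise it as the fission tree of an explicit irregular type $Q$ of the form $Q = \sum_{i=1}^{r} A_i x^i$ with each $A_i \in \mf t = \mf t_{\mf{sl}_{N+1}(\mb C)}$ (for $N+1$ equal to the number of leaves) chosen diagonal with eigenvalue-multiplicity pattern encoding the desired partition at level $i$, compatibly with the nesting of partitions.

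The core content is the forward direction, which is essentially an assembly of Prop.~\ref{prop:decomposition_deformation_space}, Thm.~\ref{thm:restricted_arrangement_type_A} and the simple-case reduction; the main delicate point is the bookkeeping with the two partitions ($J$-partitions of $\ul{n+1}$ vs. $J$-partitions of $\Delta_{\mf h}$) and the verification that the tree-theoretic quantity $k_j$ coincides with the partition-refinement index that enters Thm.~\ref{thm:restricted_arrangement_type_A}. The converse direction poses no structural difficulty once one observes that the nesting of partitions required by \eqref{eq:fission_tree_sequence} can always be prescribed freely from top to bottom by choosing eigenvalues of the $A_i$ in $\mf t$ with the desired coalescence pattern.
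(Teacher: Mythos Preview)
Your argument is correct and follows essentially the same route as the paper: decompose $\Gamma_Q$ level-by-level via~\eqref{eq:pure_local_WMCG_factors}, split each level according to the irreducible components of $\Phi_{\mf h_{i+1}}$ (you invoke \S~\ref{sec:simple_is_enough} explicitly, the paper does so implicitly), and apply Thm.~\ref{thm:restricted_arrangement_type_A} to each component to read off the factor $\PB_{k_j}$; the converse is handled by an explicit tree construction in both cases.

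One small slip: in your parenthetical you write that ``the roots outside $\Phi_{\mf h_{i+1}}$ already vanish identically on $\Ker(\Phi_{\mf h_i})$''. This is false (indeed the Levi property gives the opposite conclusion, cf.~the proof of Lem.~\ref{lem:nonempty_complement}), but it is also unnecessary: the identity $\bm B_i = \bm B(\Phi_{\mf h_i},\Phi_{\mf h_{i+1}})$ is literally the content of~\eqref{eq:deformation_space_for_coefficients_2}, so no further justification is needed. You can simply delete the parenthetical.
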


\begin{proof}
	By construction a node $i \in J_l$ corresponds to an irreducible component of $\Phi_{\mf h_l}$, which splits into $k_i \geq 0$ irreducible components inside $\Phi_{\mf h_{l-1}} \sse \Phi_{\mf h_l}$, corresponding to the child-nodes $j \in \bm \phi^{-1}(i) \sse J_{l-1}$.
	By Thm.~\ref{thm:restricted_arrangement_type_A} this yields a pure braid group on $k_i$ strands, sitting inside $\Gamma_Q$.
	This gives independent factors at each level of the tree, and the conclusion follows from the splitting~\eqref{eq:pure_local_WMCG_factors}---taking the product over all levels.

	For the second statement, given any sequence of integers $n_i \geq 0$ with finite support we can construct a fission tree having precisely $n_i$ nodes with $i \geq 1$ child-nodes (in many ways, cf. Ex.~\ref{ex:examples_presentations}).
	In that case
	\begin{equation}
		\label{eq:cabled_braid_group_type_A_distinct_factors}
		\Gamma_Q \simeq \prod_{i \geq 1} \PB^{n_i}_i. \qedhere
	\end{equation}
\end{proof}

\begin{remark}[Low-order and irreducible presentations]
	The theorem implies that different trees can lead to isomorphic groups, e.g. a ``presentation'' by a tree of minimal height is obtained by splitting nodes as quick as possible.

	Conversely we can consider a tree with a single node splitting at each level: all fission subsystems are then \emph{irreducible}, so there are integers $n_1 \leq \dm \leq n_p \leq n$ such that~\eqref{eq:root_filtration} becomes
	\begin{equation*}
		A_{n_1} \sse \dm \sse A_{n_p} \sse A_n,
	\end{equation*}
	with embedding on the first slots.
\end{remark}

\begin{example}[Examples of presentations]
	\label{ex:examples_presentations}
	For $n = 8$ let us consider the irregular type $Q^I = T_1 x + T_2 x^2 + T_3 x^3$, taking the following coordinate vectors $\bm \alpha_i = \bigr(\alpha_1(T_i),\dc,\alpha_9(T_i) \bigr) \in \mb R^9$:
	\begin{equation*}
		\bm \alpha_1 = (4,3,2,1,0,-1,-2,-3,-4), \quad \bm \alpha_2 = (4,4,3,2,1,0,-3,-4,-7),
	\end{equation*}
	and
	\begin{equation*}
		\bm \alpha_3 = (2,2,1,1,1,0,0,0,-7).
	\end{equation*}

	Then the sequence of Levi subsystems is
	\begin{equation*}
		\vn \sse A_1 \sse A_1 \ops A_2 \ops A_2 \sse A_8,
	\end{equation*}
	and the associated pure local WMCG is $\Gamma_{Q^I} \simeq \PB_2 \times \PB_3^2 \times \PB_4$.
	The corresponding fission tree is:

	\begin{center}
		\begin{tikzpicture}
			\foreach \name/\x/\y in {A1/1/0,A2/2/0,A3/3/0,A4/4/0,A5/5/0,A6/6/0,A7/7/0,A8/8/0,A9/9/0,
					C1/1.5/2,C2/4/2,C3/7/2,C4/9/2,
					B1/1.5/1,B2/3/1,B3/4/1,B4/5/1,B5/6/1,B6/7/1,B7/8/1,B8/9/1,
					D1/5/3}
			\vertex (\name) at (\x,\y){};
			\foreach \from/\to in {A1/B1,A2/B1,A3/B2,A4/B3,A5/B4,A6/B5,A7/B6,A8/B7,A9/B8,
					B1/C1,B2/C2,B3/C2,B4/C2,B5/C3,B6/C3,B7/C3,B8/C4,
					D1/C1,D1/C2,D1/C3,D1/C4}
			\draw (\from) -- (\to);
		\end{tikzpicture}
	\end{center}

	But this is also the fundamental group of the space of admissible deformations of $Q^{II} = T_1 x + T_2 x^2$, taking coordinate vectors
	\begin{equation*}
		\bm \alpha_1 = (4,3,2,1,0,-1,-2,-3,-4), \quad \bm \alpha_2 = (4,1,1,0,0,0,-2,-2,-2),
	\end{equation*}
	which yields the low-order presentation, and has associated filtration
	\begin{equation*}
		\vn \sse A_1 \ops A_2 \ops A_2 \sse A_8.
	\end{equation*}
	The (minimal-height) fission tree is then:

	\begin{center}
		\begin{tikzpicture}
			\foreach \name/\x/\y in {A1/1/0,A2/2/0,A3/3/0,A4/4/0,A5/5/0,A6/6/0,A7/7/0,A8/8/0,A9/9/0,
					B1/1/1,B2/2.5/1,B3/5/1,B4/8/1,
					C1/5/2}
			\vertex (\name) at (\x,\y){};
			\foreach \from/\to in {A1/B1,A2/B2,A3/B2,A4/B3,A5/B3,A6/B3,A7/B4,A8/B4,A9/B4,
					C1/B1,C1/B2,C1/B3,C1/B4}
			\draw (\from) -- (\to);
		\end{tikzpicture}
	\end{center}

	Finally, this pure local WMCG is also the fundamental group of the space of admissible deformations of $Q^{III} = T_1 x + T_2 x^2 + T_3 x^3 + T_4 x^4$, taking coordinate vectors
	\begin{equation*}
		\bm \alpha_1 = (4,3,2,1,0,-1,-2,-3,-4), \quad \bm \alpha_2 = (4,4,3,2,1,0,-3,-4,-7),
	\end{equation*}
	and
	\begin{equation*}
		\bm \alpha_3 = (2,2,2,2,1,0,-3,-3,-3), \quad \bm \alpha_4 = (1,1,1,1,1,1,0,-2,-4).
	\end{equation*}
	This yields the irreducible presentation, with filtration
	\begin{equation*}
		\vn \sse A_1 \sse A_3 \sse A_5 \sse A_8,
	\end{equation*}
	and the fission tree is as follows:

	\begin{center}
		\begin{tikzpicture}
			\foreach \name/\x/\y in {A1/1/0,A2/2/0,A3/3/0,A4/4/0,A5/5/0,A6/6/0,A7/7/0,A8/8/0,A9/9/0,
					B1/1.5/1,B2/3/1,B3/4/1,B4/5/1,B5/6/1,B6/7/1,B7/8/1,B8/9/1,
					C1/2.5/2,C2/5/2,C3/6/2,C4/7/2,C5/8/2,C6/9/2,
					D1/3.5/3,D2/7/3,D3/8/3,D4/9/3,
					E1/5/4}
			\vertex (\name) at (\x,\y){};
			\foreach \from/\to in {A1/B1,A2/B1,A3/B2,A4/B3,A5/B4,A6/B5,A7/B6,A8/B7,A9/B8,
					B1/C1,B2/C1,B3/C1,B4/C2,B5/C3,B6/C4,B7/C5,B8/C6,
					C1/D1,C2/D1,C3/D1,C4/D2,C5/D3,C6/D4,
					D1/E1,D2/E1,D3/E1,D4/E1}
			\draw (\from) -- (\to);
		\end{tikzpicture}
	\end{center} 
\end{example}

\section{Type B/C}
\label{sec:type_BC}

Let still $n \geq 1$ be an integer, and $\mf g = \mf{so}_{2n+1}(\mb C)$.
The standard Cartan subalgebra $\mf t \sse \mf g$ is identified with $\mb C^n = \bops_i \mb Ce_i$, and we retain the notations of \S~\ref{sec:type_A}.

The usual choice of basis for the root system is $\Delta_{\mf g} = \set{\theta_1,\dc,\theta_n}$ with
\begin{equation*}
	\theta_i = \alpha^-_{i,i+1}, \qquad i \in \ul{n-1},
\end{equation*}
as for $A_{n-1}$, plus the \emph{short} root $\theta_n = \alpha_n \in \mf t^{\dual}$.
We then have
\begin{equation*}
	B_n = \Set{ \pm \alpha^-_{ij}, \, \pm \alpha^+_{ij} | i < j \in \ul n } \cup \Set{ \alpha_i | i \in \ul n } \sse \mf t^{\dual},
\end{equation*}
writing $\alpha^+_{ij} \ceqq \alpha_i + \alpha_j$~\cite[Ch.~VI, \S~4.5]{bourbaki_1968_groupes_et_algebres_de_lie_chapitres_4_6}.

\subsection{Dynkin diagrams}

Let as above $\Phi_{\mf h} \sse B_n$ be the Levi subsystem associated to an element $A \in \mf t$, and suppose $\Delta_{\mf h} = \Delta_{\mf g} \cap \set{A}^{\perp} \sse \Delta_{\mf g}$.
We must now consider whether the Dynkin (sub)diagram $\mc D_{\mf h}$ contains the rightmost node of
\begin{equation*}
	\mc D_{B_n} = \dynkin[labels={\theta_1,\theta_2,,\theta_{n-1},\theta_n},scale=2] B{}.
\end{equation*}

If it does not, then $\Delta_{\mf h}$ only contains long roots, and $\Phi_{\mf h} \sse A_{n-1}$.
Else there exists an integer $m \leq n$ such that
\begin{equation*}
	\Delta_{\mf h} = \Delta^A_{\mf h} \cup \Delta^B_{\mf h}, \qquad \Delta_{\mf h}^B = \set{\theta_{n-m+1},\dc,\theta_n} \sse \Delta_{\mf g}.
\end{equation*}
Accordingly one finds $\Phi_{\mf h} \simeq \Phi_{\mf h}^A \ops B_m$, with $\Phi_{\mf h}^A \sse A_{n-m}$ (inside $\bops_{i = 1}^{n-m} \mb C \alpha_i$, while $B_m \sse \bops_{i = n-m+1}^n \mb C \alpha_i$).

\subsection{Kernels}

Let again $U = \Ker(\Phi_{\mf h}) \sse \mf t$, so by construction
\begin{equation*}
	U = \Ker(\Phi^A_{\mf h}) \cap \Ker(B_m).
\end{equation*}

With the above notation one has $\Ker(B_m) = \mb C^{n-m} \times (0) \sse \mf t$, and we conclude by Prop.~\ref{prop:extended_kernel}: there is a linear isomorphism
\begin{equation}
	\label{eq:kernel_type_B}
	\mb C^J \simeq U \sse \mb C^{n-m} \times (0),
\end{equation}
where $J$ is the index set of the partition associated with $\Phi_{\mf h}^A$, as in~\eqref{eq:second_partition}.

\subsection{Restricted arrangement and fundamental group}

Finally we will describe the hyperplane arrangement inside the kernel, provided by the kernel of the (positive) roots $\alpha \in B_n \sm \Phi_{\mf h}$ after restriction to $U$.

\begin{theorem}
	\label{thm:restricted_arrangement_type_B}

	The hyperplane arrangement in the kernel is of type $B_{d+1}/C_{d+1}$, where $d = \dim(U)$.

	Moreover, if no component of the $J$-partition is trivial, under the isomorphism~\eqref{eq:kernel_type_A} there is an identification of root systems
	\begin{equation*}
		BC_{d+1} \simeq \eval[1]{B_n}_U \sse U^{\dual},
	\end{equation*}
	using the notation of~\eqref{eq:restricted_root_subsystem}.
\end{theorem}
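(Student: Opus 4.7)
The plan is to extend the argument of Thm.~\ref{thm:restricted_arrangement_type_A} by restricting each root of $B_n$ to $U$ and analysing the resulting functionals in $U^{\dual}$. Using the decomposition $\Phi_{\mf h} = \Phi^A_{\mf h} \ops B_m$, I would split the index set $\ul n$ into the ``$A$-part'' $\set{1,\dc,n-m}$ and the ``$B$-part'' $\set{n-m+1,\dc,n}$; the isomorphism~\eqref{eq:kernel_type_B} then yields a basis $\ol e_I = \sum_{i \in I} e_i$ of $U$ indexed by the parts $I \in J$ of a $J$-partition of the $A$-part coming from $\Phi^A_{\mf h}$ via~\eqref{eq:second_partition}. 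Let $\ol\alpha_I \in U^{\dual}$ be the dual basis, and for an index $i$ in the $A$-part write $I(i) \in J$ for the part containing $i$.

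Next I would catalogue the restriction of every root in $B_n \sm \Phi_{\mf h}$ to $U$. The cases are: $\alpha^-_{ij}$ with $i,j$ in the $A$-part and $I(i) \neq I(j)$ restricts to $\ol\alpha_{I(i)} - \ol\alpha_{I(j)}$; $\alpha^+_{ij}$ with $i < j$ both in the $A$-part in distinct parts restricts to $\ol\alpha_{I(i)} + \ol\alpha_{I(j)}$; $\alpha^+_{ij}$ with $i < j$ both in the same part $I$ restricts to $2\ol\alpha_I$; $\alpha^{\pm}_{ij}$ with $i$ in the $A$-part and $j$ in the $B$-part restricts to $\pm \ol\alpha_{I(i)}$; $\alpha_i$ with $i$ in the $A$-part restricts to $\ol\alpha_{I(i)}$; and all remaining roots lie in $B_m \sse \Phi_{\mf h}$ and restrict to zero. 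Taking kernels, the hyperplane arrangement in $U$ is exactly $\Set{\Ker(\ol\alpha_I \pm \ol\alpha_{\wh I}) | I \neq \wh I \in J} \cup \Set{\Ker(\ol\alpha_I) | I \in J}$, which is the $B/C$-type arrangement on $U \simeq \mb C^{\abs J}$, proving the first claim (with the dimension matching~\eqref{eq:dimension_kernel}).

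For the identification with a full $BC$ root system I would track the restricted functionals themselves rather than only their kernels. The crucial observation is that the doubled short functional $2\ol\alpha_I$ can arise \emph{only} from $\alpha^+_{ij}$ with $i < j$ both in the same part $I$, because $\alpha^+_{ii} = 2\alpha_i$ is not a root of $B_n$; such a pair exists precisely when $\abs I \geq 2$, i.e.\ when the component $I$ is not trivial. Under the hypothesis that no component of the $J$-partition is trivial, this holds for every $I \in J$, so $2\ol\alpha_I$ appears for every $I$, and together with the short and long restrictions of the previous step one recovers the full non-reduced root system $BC$ on $U^{\dual}$. The main obstacle is really the combinatorial bookkeeping: each of the classes $\alpha^{\pm}_{ij}$ and $\alpha_i$ must be split according to whether its indices lie in the $A$- or the $B$-part and whether they share a part of $J$; missing a case would either spoil the hyperplane count or conflate the $B$- and $BC$-phenomena, and it is precisely the absence of trivial parts that forbids this conflation.
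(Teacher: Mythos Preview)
Your proposal is correct and takes essentially the same approach as the paper: both arguments restrict each root of $B_n \sm \Phi_{\mf h}$ to $U$ using the basis $(\ol e_I)_{I \in J}$, observe that the functionals $\pm(\ol\alpha_I \pm \ol\alpha_{\wh I})$ and $\pm\ol\alpha_I$ always arise while $2\ol\alpha_I$ appears precisely when $\abs{I} \geq 2$, and conclude accordingly. Your case analysis is simply a more detailed spelling-out of what the paper compresses into a sentence; one tiny slip is that for $i$ in the $A$-part and $j$ in the $B$-part both $\alpha^+_{ij}$ and $\alpha^-_{ij}$ restrict to $+\ol\alpha_{I(i)}$ (not $\pm\ol\alpha_{I(i)}$), but this is harmless since the negatives of these roots supply the opposite sign.
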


\begin{proof}
	Computing the restrictions of all roots shows one always has the inclusion
	\begin{equation*}
		\Set{ \pm (\ol \alpha_I - \ol \alpha_{\wh I}), \, \pm (\ol \alpha_I + \ol \alpha_{\wh I}) | I \neq \wh I \in J } \cup \set{ \ol \alpha_I | I \in J } \sse \eval[1]{B_n}_U,
	\end{equation*}
	using the notation in the proof of Thm.~\ref{thm:restricted_arrangement_type_A}.
	Further the covector $2 \ol \alpha_I \in U^{\dual}$ appears from the restriction of $\alpha^+_{ij} \in \mf t^{\dual}$ if and only if there exists a pair $(i,j)$ with $i \in J$ and $j \in I_i \sm \set{i}$.

	These are all covectors obtained upon restriction to $U$, so the hyperplane arrangement is always of type $B/C$.
\end{proof}

\begin{remark}
	The hyperplanes arrangements are always those of a root system, so their reflection groups are crystallographic; but the set of restricted functional themselves are \emph{not} root systems in general.
\end{remark}

It follows that $\pi_1 \bigl( \bm B(\Phi_{\mf h},\Phi_{\mf g}),A \bigr) \simeq \PB^{BC}_{d+1}$ in this case.

\subsection{Bichromatic fission trees}

Again we can now reason recursively.
In brief, a component of type $B$ will produce a pure braid group of type $B/C$ upon breaking to a Levi subsystem, and this will continue until such subsystems still contain a type-$B$ component.
At some point this might stop, in which case we will get back to only having (sub)components of type $A$---and pure braid groups.

This leads to the following natural generalisation of Def.~\ref{def:fission_tree}.
Denote $\set{g,b}$ the set of colours ``green'' and ``blue'', ordered by $g \leq b$; then:
\begin{definition}[Bichromatic fission tree]
	\label{def:bichromatic_fission_tree}

	A \emph{bichromatic fission tree} is a fission tree $\mc T = \mc T_Q$ equipped with a \emph{colour function} $c \cl \mc T_0 \to \set{g,b}$; in turn a colour function satisfies:
	\begin{itemize}
		\item $c \bigl(\bm \phi(i)\bigr) \geq c(i)$ for $i \in \mc T_0 \sm \set{\ast}$;

		\item $\abs{\bm \phi^{-1}(i) \cap c^{-1}(b)} \leq 1$ for $i \in \mc T_0$.
	\end{itemize}
\end{definition}

The conditions mean that green nodes have green child-nodes, and that any node has at most one blue child-node, respectively.

Now consider a fission sequence in type $B$, i.e.
\begin{equation*}
	\Phi_{\mf h_1} \sse \dm \sse \Phi_{\mf h_p} \sse  \Phi_{\mf h_{p+1}} = B_n.
\end{equation*}
As above this leads to a type-$A$ filtration
\begin{equation*}
	\Phi_{\mf h_1}^A \sse \dm \Phi_{\mf h_p}^A \sse A_{n-1},
\end{equation*}
and to a filtration by irreducible subsystems
\begin{equation*}
	B_{m_1} \sse \dm \sse B_{m_p} \sse B_n,
\end{equation*}
with embeddings on the last slots (at each step).

The algorithm to assign a bichromatic fission tree to such double filtrations is the following.
A node $i \in J_l$ corresponds to an irreducible component of the subsystem $\Phi_{\mf h_l} \sse B_n$: put a green node for each type-$A$ component, and a blue node if $m_l > 0$; then define $j = \bm{\phi(i)} \in J_{l+1}$ if the irreducible component of $\Phi_{\mf h_{l+1}}$ associated with $j$ contains the irreducible component of $\Phi_{\mf h_l} \sse \Phi_{\mf h_{l+1}}$ associated with $i \in J_l$.

Finally we extend the notation of \S~\ref{sec:type_A} by redefining
\begin{equation*}
	k_i \ceqq \abs{\bm \phi^{-1}(i) \cap c^{-1}(g)} \geq 0, \qquad i \in \mc T_0,
\end{equation*}
which is the number of \emph{green} child-nodes of $i$.

\begin{theorem}
	\label{thm:tree_gives_wmcg_type_BC}
	There is a group isomorphism
	\begin{equation}
		\label{eq:type_BC_cabled_braid_group}
		\Gamma_Q \simeq \prod_{c^{-1}(g)} \PB_{k_i} \times \prod_{c^{-1}(b)} \PB^{BC}_{k_i}.
	\end{equation}
	Conversely, pure local WMCGs of type $B$ exhaust finite products of pure braid groups of types $A$ and $B/C$.
\end{theorem}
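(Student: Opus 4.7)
The plan is to mirror the proof strategy of Thm.~\ref{thm:tree_gives_wmcg_type_A}, using the product decomposition~\eqref{eq:pure_local_WMCG_factors} to reduce the problem to one factor at a time and then invoking Thm.~\ref{thm:restricted_arrangement_type_B} at each level of the fission. Concretely, fix the filtration of Levi subsystems $\Phi_{\mf h_1} \sse \dm \sse \Phi_{\mf h_p} \sse \Phi_{\mf h_{p+1}} = B_n$ attached to $Q$, and decompose each $\Phi_{\mf h_l}$ into irreducible components: by the Dynkin-diagram discussion in \S~\ref{sec:type_BC}, this yields finitely many type-$A$ components together with at most one type-$B$ component. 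Each such component is represented by a node of $\mc T_0$, coloured green if of type $A$ and blue if of type $B$; the containment $\Phi_{\mf h_l} \sse \Phi_{\mf h_{l+1}}$ defines the parent function $\bm \phi$, and the two axioms of Def.~\ref{def:bichromatic_fission_tree} (a type-$B$ component has at most one type-$B$ child, and type-$A$ components only split into type-$A$ ones) are exactly the content of the Dynkin-diagram analysis.

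Next I would compute the fundamental group of the $l$-th factor $\bm B_l$. By the analogue of Prop.~\ref{prop:decomposition_deformation_space} applied within each irreducible block (or equivalently, by \S~\ref{sec:simple_is_enough} applied to $\mf h_{l+1}$ in place of $\mf g$), the complement $\bm B(\Phi_{\mf h_l},\Phi_{\mf h_{l+1}})$ splits as a product of one complement per irreducible component of $\Phi_{\mf h_{l+1}}$. On a type-$A$ component, Thm.~\ref{thm:restricted_arrangement_type_A} contributes a pure braid group $\PB_{k_i}$, where $k_i$ counts the (necessarily green) children of the corresponding green node $i$. On the unique type-$B$ component---if present at level $l+1$---Thm.~\ref{thm:restricted_arrangement_type_B} produces a pure braid group $\PB^{BC}_{k_i}$, where $k_i$ now counts the \emph{green} children of the blue node $i$ (the at-most-one blue child corresponds to the direction in which the type-$B$ component persists and hence does \emph{not} contribute a new braiding factor at this level; it will instead be accounted for one level deeper, when its own children appear). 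Taking the product over all levels and over all nodes encountered then gives~\eqref{eq:type_BC_cabled_braid_group}.

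The main subtle point---and the one I would be most careful about---is precisely this bookkeeping of the blue subtree: one must check that the persistent blue node does not produce an extra spurious factor and that the formula correctly indexes products only by nodes that are genuinely ``split at,'' which is exactly what the map $k_i \ceqq \abs{\bm \phi^{-1}(i) \cap c^{-1}(g)}$ records. I would verify this by a descending induction on $l$ starting from the root $\ast$: at each step one removes the top blue component (if any), computes the new factor via Thm.~\ref{thm:restricted_arrangement_type_B}, and applies the inductive hypothesis to the proper subtree above each child. The hypothesis that no component of the associated $J$-partition is trivial in Thm.~\ref{thm:restricted_arrangement_type_B} causes no issue, because a trivial component at level $l$ means a node without descendants at lower levels, and its contribution to~\eqref{eq:type_BC_cabled_braid_group} is trivial anyway since $\PB_0 = \PB_1 = \PB^{BC}_1 = 1$.

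For the converse statement, given any finite collection of indices specifying factors $\PB_{k}$ and $\PB^{BC}_{k'}$, I would construct an explicit bichromatic fission tree realising that product. Namely, arrange the desired factors as nodes at level $1$ of a tree (green or blue according to type), glue each to a common root via intermediate nodes of the required colour, and pad with enough leaves so that each node $i$ has exactly the prescribed number of green children. The second axiom of Def.~\ref{def:bichromatic_fission_tree} is respected by placing all blue factors along a single blue spine descending from the root. It remains to check that this abstract tree is realised by some actual irregular type $Q \in x \mf t[x]$ of type $B_n$ for $n$ large enough: this is done exactly as in Ex.~\ref{ex:examples_presentations}, by choosing numerical coordinate vectors $\bm \alpha_l \in \mb R^n$ whose patterns of equalities, sign equalities and zeros at each level reproduce the prescribed sequence of Levi subsystems.
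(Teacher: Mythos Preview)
Your proposal is correct and follows essentially the same approach as the paper's proof: reduce to one level at a time via~\eqref{eq:pure_local_WMCG_factors}, split each factor over the irreducible components of $\Phi_{\mf h_{l+1}}$ (using \S~\ref{sec:simple_is_enough}), and then invoke Thm.~\ref{thm:restricted_arrangement_type_A} on the green components and Thm.~\ref{thm:restricted_arrangement_type_B} on the blue one; for the converse you build a blue spine with the prescribed numbers of green children and then splay green nodes to produce the type-$A$ factors, exactly as the paper does. One small clarification: the hypothesis ``no component of the $J$-partition is trivial'' in Thm.~\ref{thm:restricted_arrangement_type_B} only governs whether the restricted \emph{root system} is $BC$ rather than $B$, while the first part (the hyperplane arrangement is of type $B/C$) is unconditional---so there is nothing to worry about there for the fundamental-group computation.
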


\begin{proof}
	For the first statement, the new situation (with respect to Thm.~\ref{thm:tree_gives_wmcg_type_A}) is that a blue node $i \in \mc T_0$ yields a pure braid group of type $B_k/C_k$, where $k$ is the number of its green child-nodes---corresponding to the decomposition of $B_{m_l} \sse \Phi_{\mf h_{l+1}}$ into type-$A$ irreducible components for $\Phi_{\mf h_l} \sse \Phi_{\mf h_{l+1}}$.

	For the second statement consider trees where no green node splits.
	If there are $n_i \geq 1$ blue nodes with $i \geq 1$ green child-nodes this yields
	\begin{equation*}
		\Gamma_Q \simeq \prod_{i \geq 0} \bigl(\PB^{BC}_i\bigr)^{n_i},
	\end{equation*}
	which is an arbitrary finite product (analogously to~\eqref{eq:cabled_braid_group_type_A_distinct_factors}), and type-$A$ factors are then obtained by splaying some green node.
\end{proof}

\subsection{Type C}

Taking $\mf g = \mf{sp}_{2n}(\mb C)$, with root system
\begin{equation*}
	C_n = \Set{ \pm \alpha^-_{ij}, \, \pm \alpha^+_{ij} | i < j \in \ul n } \cup \Set{ 2\alpha_i | i \in \ul n } \sse \mf t^{\dual},
\end{equation*}
yields the same situation (see~\cite[Ch.~VI, \S~4.6]{bourbaki_1968_groupes_et_algebres_de_lie_chapitres_4_6} for the construction of the root system).

In brief this is because $C_n$ is the dual/inverse of $B_n$, and the Dynkin diagram has an analogous shape:
\begin{equation*}
	\mc D_{C_n} = \dynkin[labels={\theta_1,\theta_2,,\theta_{n-1},\theta_n},scale=2] C{},
\end{equation*}
where now $\theta_n = 2\alpha_n$ is the long simple root.

This leads to the same hyperplane arrangements, and slight variations of the above arguments yield proofs of theorems analogous to~\ref{thm:restricted_arrangement_type_B}--\ref{thm:tree_gives_wmcg_type_BC}.
Hence bichromatic fission trees control pure local WMCGs of types $A$, $B$, and $C$.

\begin{remark}
	Let us nonetheless stress a difference: the partition introduced in Lem.~\ref{lem:second_partition}, in type $A$, classifies \emph{all} root subsystem, thereby proving they are all obtained from fission.

	This is false in types $B/C$: suffices to take a root subsystem which has more than an irreducible component of type $B/C$ (respectively).
	Nonetheless the idea of Lem.~\ref{lem:second_partition} can be extended to treat all the classical types, generalising the partition $\ul{n+1} \thra J$ to other combinatoric objects (which retain more information than the Dynkin diagram, cf.~\cite{rembado_2024_a_colourful_classification_of_quasi_root_systems_and_hyperplane_arrangements}).
\end{remark}

In the next section we will see that yet another generalisation is necessary for the last classical type, which finally leads to \emph{noncrystallographic} arrangements.

\section{Type D}
\label{sec:type_D}

For an integer $n \geq 1$, let $\mf g = \mf{so}_{2n}(\mb C)$.
The standard Cartan subalgebra $\mf t \sse \mf g$ is identified with $\mb C^n = \bops_i \mb Ce_i$, and we retain the notations of \S\S~\ref{sec:type_A}--\ref{sec:type_BC}.

The usual choice of basis is $\Delta_{\mf g} = \set{\theta_1,\dc,\theta_{n-1}}$ with
\begin{equation*}
	\theta_i = \alpha^-_{i,i+1}, \qquad i \in \ul{n-1},
\end{equation*}
as for $A_{n-1}$, and $\theta_n = \alpha^+_{n-1,n} \in \mf t^{\dual}$~\cite[Ch.~VI, \S~4.8]{bourbaki_1968_groupes_et_algebres_de_lie_chapitres_4_6}.
We then have
\begin{equation*}
	D_n = \Set{ \pm \alpha^-_{ij}, \, \pm \alpha^+_{ij} | i < j \in \ul n }.
\end{equation*}

\subsection{Dynkin diagrams}

Let $\Phi_{\mf h} \sse D_n$ be a Levi subsystem.

Analogously to the types $B/C$, the question is whether the Dynkin diagram $\mc D_{\mf h} \sse \mc D_{D_n}$ has a component of type $D$ or not, looking at
\begin{equation*}
	\mc D_{D_n} = \dynkin[labels={\theta_1,\theta_2,,,\theta_{n-1},\theta_n},scale=2] D{}.
\end{equation*}

If not, then $\Phi_{\mf h} \sse A_{n-1}$.
Else $\Phi_{\mf h} \simeq \Phi_{\mf h}^A \ops D_m$, with $\Phi_{\mf h}^A \sse A_{n-m}$ and $D_m \sse (\mb C^m)^{\dual}$ for some integer $m \leq n$, as in \S~\ref{sec:type_BC}.

\subsection{Kernels}

One finds
\begin{equation*}
	\Ker(\Phi_{\mf h}) = \Ker(\Phi^A_{\mf h}) \cap \Ker(D_m),
\end{equation*}
and again $\Ker(D_m) = \mb C^{n-m} \times (0) \sse \mf t$.
Thus $U = \Ker(\Phi_{\mf h})$ only depends on the type-$A$ irreducible components of $\Phi_{\mf h}$, as in~\eqref{eq:kernel_type_B}.

\subsection{Restricted arrangement and fundamental group}

To introduce our new example, consider two integers $r,s \geq 0$.
Consider the following hyperplane arrangement inside $\mb C^{r+s}$: it contains the hyperplanes $\Ker(\alpha^{\pm}_{ij})$ for $ i \neq j \in \ul{r+s}$ (i.e. the root hyperplanes of $D_{r+s}$) but also the hyperplanes $\Ker(\alpha_i)$ for $i \in \ul r$.
Hence $\mb C^r \times \set{0} \sse \mb C^{r+s}$ contains the root hyperplanes of type $B_r/C_r$, but there is no splitting.

We will say this is an hyperplane arrangement of ``exotic'' type $(B_r/C_r)D_s$.

\begin{remark}
	Note the reflection group generated by this hyperplane arrangement is the Weyl group of type $B_{r+s}/C_{r+s}$ if $r >0$, else it is the Weyl group of type $D_s$; this is thus always crystallographic, but the hyperplane arrangement itself is \emph{not} that of a root system in general: e.g. the easiest nontrivial example yields 7 hyperplanes in $\mb C^3$.
	(Note that there are no irreducible, reduced, rank-3 root systems with 14 roots.)
\end{remark}

\begin{theorem}
	\label{thm:restricted_arrangement_type_D}

	There are two cases:
	\begin{itemize}
		\item If $m > 0$ then the hyperplane arrangement in the kernel is of type $B_{d+1}/C_{d+1}$, where $d = \dim(U)$;

		\item if $m = 0$ then the hyperplane arrangement in the kernel is of type $(B_r/C_r)D_s$, where $r \leq \abs J$ is the number of nontrivial irreducible components of $\Phi_{\mf h}^A \sse \Phi_{\mf h}$, and $s = \abs J - r$.
	\end{itemize}
\end{theorem}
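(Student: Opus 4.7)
The plan is to compute explicitly the restriction to $U$ of each root of $D_n$, separating the analysis according to whether $m > 0$ or $m = 0$. Throughout I use the partition $\ul{n-m} \thra J$ attached to $\Phi_{\mf h}^A$ as in~\eqref{eq:second_partition}, and the linear isomorphism $\mb C^J \xra{\simeq} U$ sending $\ol e_I \lmt e_I = \sum_{i \in I} e_i$ (coming from Prop.~\ref{prop:extended_kernel} together with the inclusion $U \sse \mb C^{n-m} \times (0) \sse \mb C^n$ obtained from $\Ker(D_m) = \mb C^{n-m} \times (0)$). I denote by $\ol \alpha_I \in (\mb C^J)^{\dual}$ the dual basis, as in the proof of Thm.~\ref{thm:restricted_arrangement_type_A}.

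For $m > 0$ I split the positive roots of $D_n$ into three families according to where their indices lie: (a) both $i, j \in \ul{n-m}$; (b) $i \in \ul{n-m}$ and $j > n-m$; (c) both $i, j > n-m$. Family (c) restricts to zero on $U$ and contributes no hyperplane. For (b) both $\alpha^-_{ij}$ and $\alpha^+_{ij}$ restrict to $\pm \ol \alpha_{I_i}$ (since $\eval[1]{\alpha_j}_U = 0$), thus producing the coordinate hyperplane $\Ker(\ol \alpha_I)$ for \emph{every} $I \in J$. Finally, in family (a) the restrictions of $\alpha^-_{ij}$ follow the type-$A$ computation of Thm.~\ref{thm:restricted_arrangement_type_A}, and direct inspection yields $\pm(\ol \alpha_I + \ol \alpha_{\hat I})$ for $\alpha^+_{ij}$ when $i \in I$ and $j \in \hat I$ with $I \neq \hat I$ (with $i, j$ in a common block restricting to $\pm 2 \ol \alpha_I$, adding no new hyperplane). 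Together these match exactly the $B_{\abs J}/C_{\abs J}$ root-hyperplane arrangement, as wanted.

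For $m = 0$ only family (a) occurs, and the key difference is that the coordinate hyperplanes $\Ker(\ol \alpha_I)$ are no longer supplied by mixed-index roots. They now arise \emph{only} from $\alpha^+_{ij}$ with $i < j$ in a \emph{common} block $I$, whose restriction is $2 \ol \alpha_I$; this requires $\abs I \geq 2$, i.e.\ that $I$ correspond to a nontrivial irreducible type-$A$ component of $\Phi_{\mf h}^A$. Hence the restricted arrangement consists of all $D_{\abs J}$-hyperplanes $\Ker(\ol \alpha_I \pm \ol \alpha_{\hat I})$ for $I \neq \hat I \in J$ together with exactly the $r$ coordinate hyperplanes indexed by the nontrivial parts. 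Ordering the parts so that the nontrivial ones come first, we recognise precisely the exotic arrangement of type $(B_r/C_r)D_s$ with $s = \abs J - r$, as defined just before the statement.

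I expect no serious obstacle here: the argument is a disciplined case analysis of how each $D_n$-root restricts to $U$. The conceptual point worth highlighting is the dichotomy between the two cases: a nontrivial $D_m$-block of $\Phi_{\mf h}$ supplies ``extra'' vanishing coordinates on $U$, through which mixed-index roots $\alpha^-_{ij}$ with $i \leq n-m < j$ automatically produce a full set of coordinate hyperplanes on $\mb C^J$; without such a block, only blocks of cardinality $\geq 2$ can yield a coordinate hyperplane (via the $\alpha^+$-roots inside a single block), which is exactly what causes the noncrystallographic phenomenon encoded in the type $(B_r/C_r)D_s$.
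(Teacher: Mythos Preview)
Your proof is correct and follows essentially the same approach as the paper: both proceed by computing explicitly the restriction to $U$ of every root of $D_n$, using the partition $\ul{n-m} \thra J$ and the identification $U \simeq \mb C^J$. Your version is somewhat more detailed in its case organisation (splitting into families (a), (b), (c) according to where the indices lie), whereas the paper's proof is terser, but the substance is identical.
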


\begin{proof}
	In the notation of the proof of Thm.~\ref{thm:restricted_arrangement_type_A}, one always has
	\begin{equation*}
		\Set{ \pm (\ol \alpha_I - \ol \alpha_{\wh I}), \, \pm (\ol \alpha_I + \ol \alpha_{\wh I}) | I \neq \wh I \in J } \sse \eval[1]{D_n}_U,
	\end{equation*}
	but further some of the covectors $\ol \alpha_I, 2\ol \alpha_I \in U^{\dual}$ may appear.

	Namely if $I \in J$ is not a singleton then $2 \ol \alpha_I \in \eval[1]{D_n}_U$, and further if $m > 0$ then $\ol \alpha_I \in \eval[1]{D_n}_U$ for all $I \in J$, leading to the classification in the statement.
\end{proof}

Hence we have 2 cases: if $m > 0$ one has $\pi_1 \bigl( \bm B(\Phi_{\mf h},\Phi_{\mf g}),A \bigr) \simeq \PB^{BC}_{d+1}$, while if $m = 0$ then
\begin{equation*}
	\pi_1 \bigl( \bm B(\Phi_{\mf h},\Phi_{\mf g}),A \bigr) = \PB^{BC,D}_{r,s},
\end{equation*}
denoting $\PB^{BC,D}_{r,s}$ the pure (Artin) braid group of the hyperplane arrangement of type $(B_r/C_r)D_s$.

To study the latter further, write $\bm z = (z_1,\dc,z_r) \in \mb C^r$, $\bm w = (w_1,\dc,w_s) \in \mb C^s$.
Then explicitly the ``exotic'' hyperplane complement is
\begin{equation}
	\label{eq:exotic_complement}
	X_{r,s} = \Set{ (\bm z,\bm w) \in \mb C^{r+s} | z_i \neq 0, z_i \neq \pm z_j, z_i \neq \pm w_k, w_k \neq \pm w_l } \sse \mb C^{r+s}.
\end{equation}

Denote then $\on F_i$ the free group on $i \geq 0$ generators.

\begin{proposition}
	\label{thm:exotic_type}

	There is a group isomorphism
	\begin{equation*}
		\PB^{BC,D}_{r,1} \simeq \PB^{BC}_r \ltimes \on F_{2r}.
	\end{equation*}
\end{proposition}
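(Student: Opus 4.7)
The plan is to realise $\PB^{BC,D}_{r,1}$ as the fundamental group of the hyperplane complement $X_{r,1}$ of~\eqref{eq:exotic_complement} and to exhibit a split short exact sequence of the form
\begin{equation*}
    1 \lra \on F_{2r} \lra \pi_1(X_{r,1}) \lra \PB^{BC}_r \lra 1,
\end{equation*}
arising from a Fadell--Neuwirth-type fibration obtained by ``forgetting the last variable''.

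More precisely, the first step is to consider the projection
\begin{equation*}
    \pi \cl X_{r,1} \lra Y_r, \qquad (\bm z, w) \lmt \bm z,
\end{equation*}
where $Y_r \ceqq \Set{ \bm z \in \mb C^r | z_i \neq 0, \, z_i \neq \pm z_j \text{ for } i \neq j }$ is the complement of the type-$B/C$ arrangement, whose fundamental group is by definition $\PB^{BC}_r$. I would verify that $\pi$ is a locally trivial holomorphic fibration in the standard way: the fibre over a point $\bm z \in Y_r$ is the complex line $\mb C$ minus the $2r$ (automatically distinct) points $\pm z_1,\dc,\pm z_r$, and the $2r$ roots of the missing points vary holomorphically with $\bm z$. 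The fibre is thus homotopy equivalent to a bouquet of $2r$ circles, with fundamental group $\on F_{2r}$.

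The second step is to run the long exact sequence in homotopy. The base $Y_r$ is a complexified Coxeter-type complement, hence aspherical by the Deligne theorem~\cite{deligne_1972_les_immeubles_des_groupes_de_tresses_generalises}, so $\pi_2(Y_r) = 0$. Together with connectedness of the fibre this yields the short exact sequence above. The third step is to split it by means of the continuous section
\begin{equation*}
    s \cl Y_r \lra X_{r,1}, \qquad \bm z \lmt (\bm z, 0),
\end{equation*}
which is well defined precisely because the condition $z_i \neq \pm w$ reduces, for $w = 0$, to $z_i \neq 0$, which is part of the definition of $Y_r$. This immediately yields the semi-direct product decomposition in the statement.

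I expect no serious obstacle: the fibration argument is routine, the section $w \mapsto 0$ is visible from the explicit hyperplane list, and the asphericity of $Y_r$ is classical. The only subtlety worth double-checking is that the $2r$ points $\pm z_i$ are genuinely distinct over all of $Y_r$---which indeed follows from the defining conditions $z_i \neq 0$ and $z_i \neq \pm z_j$---so that the fibre has constant homotopy type and $\pi$ is genuinely a fibre bundle with fibre $\mb C \sm \set{2r \text{ points}}$.
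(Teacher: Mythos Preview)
Your proposal is correct and follows essentially the same argument as the paper: the same forgetful fibration $X_{r,1}\to\{\text{type-}B/C\text{ complement}\}$ with fibre $\mb C$ minus $2r$ points, the same appeal to Deligne's $K(\pi,1)$ theorem for the base, and the same zero section $\bm z\mapsto(\bm z,0)$ to split the sequence. The only difference is notational (the paper calls the base $X_r$ and the fibre $Y_r$, opposite to your convention).
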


\begin{proof}
	Consider the subspace $X_r \ceqq X_{r,1} \cap \bigl( \mb C^r \times \set{0} \bigr) \sse \mb C^{r+1}$, so that
	\begin{equation*}
		X_r \simeq \Set{ \bm z \in \mb C^r | z_i \neq 0, z_i \neq \pm z_j } \sse \mb C^r,
	\end{equation*}
	which is the root-hyperplane complement of type $B_r$/$C_r$.
	Then there is a canonical projection $p \cl X_{r,1} \to X_r$ with fibres
	\begin{equation*}
		p^{-1}(\bm z) \simeq \Set{ w \in \mb C | w \neq \pm z_i } \sse \mb C,
	\end{equation*}
	i.e. a locally trivial fibration
	\begin{equation}
		\label{eq:exotic_fibration}
		Y_r \lhra X_{r,1} \lxra{p} X_r, \qquad Y_r \ceqq \mb C \sm \set{ \pm1,\dc,\pm r}.
	\end{equation}

	Now $Y_r$ and $X_r$ are path-connected, and further $X_r$ is a $K(\pi,1)$-space~\cite{deligne_1972_les_immeubles_des_groupes_de_tresses_generalises,brieskorn_saito_1972_artin_gruppen_und_coxeter_gruppen}.
	Hence~\eqref{eq:exotic_fibration} induces an exact sequence of fundamental groups (omitting base points):
	\begin{equation}
		\label{eq:extension_exotic_fundamental_group}
		1 \lra \on F_{2r} \lra \PB^{B/C,D}_{r,1} \lxra{\pi_1(p)} \PB^{B/C}_r \lra 1.
	\end{equation}

	Finally there is a canonical global (zero) section $X_r \to X_{r,1}$ splitting~\eqref{eq:extension_exotic_fundamental_group}.
\end{proof}

\begin{remark}[Exceptional isomorphism]
	If further $r = 1$ then~\eqref{eq:extension_exotic_fundamental_group} simplifies to
	\begin{equation*}
		1 \lra \on F_2 \lra \PB^{BC,D}_{1,1} \lra \mb Z \lra 1,
	\end{equation*}
	and in this case we can identify the extension.

	Namely the space $X_{1,1} \sse \mb C^2$ is isomorphic to the root-hyperplane complement of type $A_2$, essentially in view of the exceptional isomorphism $D_3 \simeq A_3$ and the results of \S~\ref{sec:type_A}.

	Hence $\PB^{BC,D}_{1,1} \simeq \PB_3$, and using $\PB_2 \simeq \mb Z$ we see~\eqref{eq:extension_exotic_fundamental_group} becomes the usual split extension
	\begin{equation*}
		1 \lra \on F_2 \lra \PB_3 \lra \PB_2 \lra 1. \qedhere
	\end{equation*}
\end{remark}

\subsection{Generalised fission trees}

Once more a filtration of fission subsystems splits into $\Phi_{\mf h_1}^A \sse \dm \Phi_{\mf h_p}^A \sse A_{n-1}$ and $D_{m_1} \sse \dm \sse D_{m_p} \sse D_n$, for an increasing sequence of integers $m_i \leq n$.

To encode $\Gamma_Q$ we now need to retain more information, according to the statement of Thm.~\ref{thm:restricted_arrangement_type_D}: namely at each level we must recall the number of trivial/nontrivial type-$A$ irreducible components of $\Phi_{\mf h_l} \cap D_{m_{l+1}} \sse \Phi_{\mf h_{l+1}}$.

This leads to the following generalisation of Def.~\ref{def:bichromatic_fission_tree}.
Introduce the set $\set{s,l}$ of diameters ``small'' and ``large'', ordered by $s \leq l$; then:
\begin{definition}[Generalised fission tree]
	\label{def:generalised_tree}

	A \emph{generalised fission tree} is a bichromatic fission tree $\bigl( \mc T_Q,c \bigr)$ equipped with a \emph{diameter function} $d \cl \mc T_0 \to \set{s,l}$; in turn a diameter function satisfies:
	\begin{itemize}
		\item $d(i) = l$ if $c(i) = b$;

		\item $d \bigl( \bm \phi(i) \bigr) \geq d(i)$ for $i \in \mc T_0 \sm \set{\ast}$;

		\item $k_i \leq 1$ if $d(i) = s$.
	\end{itemize}
\end{definition}

Hence green nodes can be small or large; large green nodes can have (green) child-nodes of any diameter, while small green nodes cannot split.

The algorithm to attach a generalised fission tree to a double filtration as above is the following.
A node $i \in J_l$ corresponds to an irreducible component of the subsystem $\Phi_{\mf h_l} \sse D_n$: put a large green node for each \emph{nontrivial} type-$A$ component, a small green node for each \emph{trivial} type-$A$ component, and finally a large blue node if $m_l > 0$.
The parent-node function is determined as in the bichromatic case.

To compute $\Gamma_Q$ in terms of the tree, note there exists a unique blue node $i_0 \in \mc T_0$ with no blue child-nodes: let $r_0, s_0 \geq 0$ be the number of large and small child-nodes of $i_0$, respectively, and let $\mc T_0' \ceqq \mc T_0 \sm \set{i_0}$.
Then retain the notation of \S~\ref{sec:type_BC}.

\begin{theorem}
	\label{thm:tree_gives_wmcg_type_D}
	There is a group isomorphism
	\begin{equation*}
		\Gamma_Q \simeq \prod_{c^{-1}(g)} \PB_{k_i} \times \PB^{BC,D}_{r_0,s_0} \times \prod_{c^{-1}(b) \cap \mc T_0'} \PB^{B/C}_{k_i}.
	\end{equation*}
	Conversely, pure local WMCGs of type $D$ are obtained by adding any one exotic factor to a pure local WMCG of type $B/C$.
\end{theorem}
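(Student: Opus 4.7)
The plan is to adapt the proofs of Thmm.~\ref{thm:tree_gives_wmcg_type_A} and~\ref{thm:tree_gives_wmcg_type_BC} to the type-$D$ setting, replacing the key local computation by Thm.~\ref{thm:restricted_arrangement_type_D}. Concretely, one starts from the product decomposition~\eqref{eq:pure_local_WMCG_factors}, expressing $\Gamma_Q \simeq \prod_i \pi_1(\bm B_i, A_i)$ with each factor the fundamental group of the restricted hyperplane complement $\bm B(\Phi_{\mf h_i}, \Phi_{\mf h_{i+1}})$; then one invokes the reduction to the simple case of \S~\ref{sec:simple_is_enough} to further decompose each such factor into a product over the irreducible components of $\Phi_{\mf h_{i+1}}$---that is, over the nodes at level $i+1$ of the generalised fission tree.

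Next, at each node $i$ one of three mutually exclusive situations occurs. If $i$ is green, the corresponding component of $\Phi_{\mf h_{i+1}}$ is of type $A$, and Thm.~\ref{thm:restricted_arrangement_type_A} delivers the factor $\PB_{k_i}$. If $i$ is blue and has a blue child-node---meaning the type-$D$ subsystem persists at the lower level---then the first case of Thm.~\ref{thm:restricted_arrangement_type_D} yields a factor $\PB^{B/C}_{k_i}$, where $k_i$ counts only the green child-nodes (the blue child being absorbed into the persistent type-$D$ subsystem, and hence not contributing a ``strand''). Finally, if $i$ is blue but has no blue child, then necessarily $i = i_0$, and the second case of Thm.~\ref{thm:restricted_arrangement_type_D} yields the exotic factor $\PB^{BC,D}_{r_0, s_0}$, with $r_0$ and $s_0$ matching the number of large and small green child-nodes under the translation between the diameter decoration and (non)triviality of the type-$A$ irreducible components. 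Uniqueness of $i_0$ follows at once from the total order of the blue chain $D_{m_1} \sse \dm \sse D_{m_{p+1}} = D_n$, which has a unique transition between $m = 0$ and $m > 0$.

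Taking the product of these factors over the nodes of $\mc T_Q$ then assembles the claimed formula for $\Gamma_Q$. For the converse, one proceeds by explicit construction in the spirit of Ex.~\ref{ex:examples_presentations}: given a target $G \times \PB^{BC,D}_{r_0, s_0}$ with $G$ any pure local WMCG of type $B/C$, one realises $G$ by a bichromatic fission construction as in Thm.~\ref{thm:tree_gives_wmcg_type_BC}, embedded inside the ambient type-$D$ Cartan, and then prescribes the coefficients of the irregular type at the bottom of the blue chain so that the type-$D$ component at level $i_0$ breaks in $\Phi_{\mf h_{i_0 - 1}}$ into exactly $r_0$ nontrivial and $s_0$ trivial type-$A$ irreducible components, thereby attaching the required exotic factor.

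The main obstacle will be verifying the correspondence between the tree decoration and the local hyperplane arrangement at the transition node $i_0$: away from $i_0$ the argument is a direct iteration of the type-$A$ and type-$B/C$ analyses, but at $i_0$ one must carefully check that restricting the type-$D$ roots of $\Phi_{\mf h_{i_0 + 1}}$ to $\Ker(\Phi_{\mf h_{i_0}})$ yields precisely the exotic arrangement $(B_{r_0}/C_{r_0})D_{s_0}$, with $r_0$ and $s_0$ read off from the diameter function as prescribed. Once this single transition is isolated, the rest of the argument unfolds by recursion, as in the previous classical types.
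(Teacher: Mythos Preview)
Your proposal is correct and follows essentially the same approach as the paper: decompose $\Gamma_Q$ via~\eqref{eq:pure_local_WMCG_factors} and the reduction to irreducible components, then invoke Thm.~\ref{thm:restricted_arrangement_type_D} at each node---with the only novelty relative to Thm.~\ref{thm:tree_gives_wmcg_type_BC} being the exotic factor at the unique transition node $i_0$. Your treatment of the converse (realise an arbitrary type-$B/C$ WMCG with $i_0$ as a leaf, then splay $i_0$ at one further level to attach any desired exotic factor) is likewise the paper's argument, spelled out in more detail.
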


\begin{proof}
	The new factor, with respect to Thm.~\ref{thm:tree_gives_wmcg_type_BC}, comes from irreducible type-$D$ components which decompose into irreducible components of type $A$ (cf.~Thm.~\ref{thm:restricted_arrangement_type_D}).

	As for the second statement, if the special node $i_0 \in \mc T_0$ is a leaf then $\Gamma_Q$ only depends on the underlying bichromatic fission tree, and yields any pure local WMCG of type $B/C$.
	Then adding a new level where only $i_0$ splits, and has no blue child-nodes, adds an exotic factor~\eqref{eq:exotic_complement} of any kind.
\end{proof}

This is the most general pure local WMCG for a classical simple Lie algebra, containing a factor which is \emph{not} in general the pure braid group of a simple Lie algebra.

\section{Pure cabled braid groups}
\label{sec:braid_operad}

Here we prove the ``multi-scale'' (pure) braiding conjecture in type $A$, cf.~\cite{ramis_2012_talk_at_the_international_workshop_on_integrability_in_dynamical_systems_and_control}.
We will do it using the fission trees and the pure braid group operad.

In brief one can express elements of $\Gamma_Q$ as braids on as many strands as the number of leaves of the tree, formalising the driving intuition of the introduction; more precisely, \emph{cabling} will provide an injective group morphism $\Gamma_Q \hra \PB_{\, \abs{J_1}}$, where $J_1 \sse \mc T_0$ are the leaves of $\mc T_Q$.
The final statement is that pure local WMCGs generalise pure \emph{cabled} braid groups (see Def.~\ref{def:pure_cabled_artin_braid_group}), and we still conjecture they are given by ``braiding of braids'' (see Conj.~\ref{conj:generalised_operads}).

\subsection{Pure cabling}

There are two natural operations on (pure) braids:
\begin{enumerate}
	\item the ``direct sum'', i.e. the canonical group embedding
	      \begin{equation*}
		      \prod_i \PB_{m_i} \lra \PB_m, \qquad (\sigma_i)_i \lmt \bops_i \sigma_i,
	      \end{equation*}
	      with $m_i \in \mb Z_{\geq 0}$ and $m = \sum_i m_i$;

	\item the ``block braid''
	      \begin{equation*}
		      \PB_m \lra \PB_k, \qquad \sigma \lmt \sigma \braket{ k_1,\dc,k_m },
	      \end{equation*}
	      with $m, k_1, \dc, k_m \in \mb Z_{\geq 0}$ and $k = \sum_i k_i$, which is the function obtained by replacing the $i$-th strand of a braid by $k_i$ parallel copies of it.
\end{enumerate}

Then the \emph{cabling} of a braid $\tau \in PB_m$ onto the $i$-th strand of a braid $\sigma \in PB_n$ is
\begin{equation}
	\label{eq:elementary_cabling}
	\sigma \circ_i \tau \ceqq \sigma \braket{ \underbrace{1,\dc,1}_{i-1 \text{ times}},m,\underbrace{1,\dc,1}_{n-i \text{ times}} \! } \cdot \bigl( \underbrace{\Id_1 \ops \dm \ops \Id_1}_{i-1 \text{ times}} \ops \tau \ops \underbrace{\Id_1 \ops \dm \ops \Id_1}_{n-i \text{ times}} \bigr) \in \PB_{m+n-1},
\end{equation}
where on the rightmost factor $\Id_1 \in \PB_1$.
In words this means replacing the $i$-th strand of $\sigma$ with the braid $\tau$.

One can show the data of the sets $\on{P}(n) = \PB_n$, the unit $\Id_1 \in \PB_1$, and the maps~\eqref{eq:elementary_cabling}, satisfies the associativity/unity axioms of an operad (as introduced in~\cite{boardman_vogt_1968_homotopy_everything_H_spaces,may_1972_the_geometry_of_iterated_loop_spaces,boardman_vogt_1973_homotopy_invariant_algebraic_structures_on_topological_spaces}), leading to the \emph{pure braid group operad} $\ms{P\!B}$~\cite[
	\S~5]{yau_2019_infinity_operads_and_monoidal_categories_with_group_equivariance}.
In particular ``simultaneous'' cabling yields the operadic composition
\begin{equation*}
	\gamma \cl \PB_n \times \prod_{i = 1}^n \PB_{k_i} \lra \PB_m, \quad (\sigma,\tau_1,\dc,\tau_n) \lmt \gamma(\sigma;\tau_1,\dc,\tau_n),
\end{equation*}
where $m = \sum_i k_i$, defined by
\begin{equation}
	\label{eq:artin_braid_operad_composition}
	\gamma(\sigma;\tau_1,\dc,\tau_n) \ceqq \sigma \braket{k_1,\dc,k_n} \cdot (\tau_1 \ops \dm \ops \tau_n).
\end{equation}

In principle this is only a function of sets, but if we equip the domain with the direct-product group structure then:
\begin{lemma}
	\label{lem:injective_morphism_operad_composition}
	The operadic composition~\eqref{eq:artin_braid_operad_composition} is an injective group morphism.
\end{lemma}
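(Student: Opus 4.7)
The plan is to establish the homomorphism property and injectivity separately, by factoring $\gamma$ through two commuting subgroup embeddings and then constructing strand-forgetting retractions.

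For the homomorphism property, I would introduce the block-cabling morphism $j_0 \cl \PB_n \to \PB_m$, $\sigma \mt \sigma \braket{k_1,\dc,k_n}$, and the direct-sum morphism $j_1 \cl \prod_i \PB_{k_i} \to \PB_m$, $(\tau_i)_i \mt \tau_1 \ops \dm \ops \tau_n$. Both are standard group homomorphisms, and by~\eqref{eq:artin_braid_operad_composition} one has $\gamma(\sigma;\bm\tau) = j_0(\sigma) \cdot j_1(\bm\tau)$. Hence it suffices to show that $\on{Im}(j_0)$ and $\on{Im}(j_1)$ centralise one another inside $\PB_m$: granted this, for $(\sigma,\bm\tau),(\sigma',\bm\tau')$ one computes
\begin{equation*}
    \gamma(\sigma\sigma';\bm\tau\bm\tau') = j_0(\sigma)j_0(\sigma')j_1(\bm\tau)j_1(\bm\tau') = j_0(\sigma)j_1(\bm\tau)j_0(\sigma')j_1(\bm\tau') = \gamma(\sigma;\bm\tau)\,\gamma(\sigma';\bm\tau').
\end{equation*}

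The commutation is the crux of the argument, and I would verify it via the topological picture of pure braids as loops in $\Conf_m(\mb C)$: an element of $\on{Im}(j_0)$ is realised by a motion where each block of $k_i$ strands remains inside a small disc that rigidly follows the $i$-th strand of $\sigma$, while an element of $\on{Im}(j_1)$ is a motion keeping each block centre fixed and only braiding the $k_i$ strands within its disc. Since these two motions act at disjoint scales (inter-block versus intra-block), they can be performed in either order up to isotopy, by shrinking the intra-block support until it is disjoint from the support of the inter-block motion.

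For injectivity, I would exhibit an explicit left inverse via strand-forgetting homomorphisms. Let $f_0 \cl \PB_m \to \PB_n$ forget all strands except a chosen representative (say the leftmost) in each block: then $f_0 \circ j_0 = \Id_{\PB_n}$ tautologically, while $f_0 \circ j_1$ is trivial, as only one strand per block survives and so each $\tau_i$ lands in $\PB_1 = 1$. Similarly, letting $f_i \cl \PB_m \to \PB_{k_i}$ forget all strands outside the $i$-th block, one has $f_i \circ j_0 = 1$ (since $\sigma$ is pure, the $i$-th cable alone induces the trivial braid on its $k_i$ strands) and $f_i \circ j_1$ is the $i$-th projection. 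Thus $f_0\bigl(\gamma(\sigma;\bm\tau)\bigr) = \sigma$ and $f_i\bigl(\gamma(\sigma;\bm\tau)\bigr) = \tau_i$ for each $i$, so $(f_0,f_1,\dc,f_n)$ retracts $\gamma$ onto the identity and $\gamma$ is injective. The main obstacle is really the centralising property of $j_0$ and $j_1$; once that is settled topologically as above, the rest is formal.
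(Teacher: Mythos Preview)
Your proposal is correct and takes a genuinely different route from the paper's proof, particularly for injectivity.

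For the homomorphism property, the paper simply cites~\cite[Lem.~5.2.4]{yau_2019_infinity_operads_and_monoidal_categories_with_group_equivariance}, whereas you unpack the content of that lemma: you factor $\gamma$ as $j_0(\sigma)\cdot j_1(\bm\tau)$ and reduce everything to the commutation of $\on{Im}(j_0)$ and $\on{Im}(j_1)$. Your topological ``disjoint scales'' argument for this commutation is essentially the standard one and is fine at the level of rigour of the paper; it has the advantage of being self-contained. For injectivity the two arguments diverge more substantially. The paper reduces to a single elementary cabling $\circ_i$ and argues directly that if $\sigma\braket{1,\dc,1,m,1,\dc,1}$ equals $\Id_1 \ops \dm \ops \tau \ops \dm \ops \Id_1$ then both $\sigma$ and $\tau$ are trivial, reasoning about which strands can cross. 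Your argument instead produces an explicit left inverse $(f_0,f_1,\dc,f_n)$ via strand-forgetting homomorphisms, which is cleaner and immediately gives the full simultaneous statement without reducing to one slot at a time. The retraction approach also makes transparent \emph{why} injectivity holds (each factor can be recovered functorially), and would generalise more easily---e.g.\ to the coloured setting of Conj.~\ref{conj:generalised_operads}. The paper's approach is more hands-on but perhaps less illuminating; both are valid.
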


\begin{proof}
	The compatibility with products follows from~\cite[Lem.~5.2.4]{yau_2019_infinity_operads_and_monoidal_categories_with_group_equivariance}.\fn{
		This is thus a (noncrossed) ``group'' operad~\cite[Ex.~2.11]{zhang_2011_group_opeards_and_homotopy_theory} (cf.~\cite{yoshida_2018_group_operads_as_crossed_interval_groups,wahl_2001_ribbon_braids_and_related_opearads}), a.k.a. an ``action'' operad~\cite{corner_gurski_2013_operads_with_general_groups_of_equivariance_and_some_2_categorical_aspects_of_operads_in_cat} (cf.~\cite[Def.~4.1.1]{yau_2019_infinity_operads_and_monoidal_categories_with_group_equivariance}).}

	To show injectivity we can prove that if
	\begin{equation*}
		\sigma' = \sigma \braket{ 1,\dc,1,m,1,\dc,1} = \Id_1 \ops \Id_1 \ops \tau \ops \Id_1 \dm \ops \Id_1 \in \PB_{m+n-1},
	\end{equation*}
	for some $(\sigma,\tau) \in \PB_n \times \PB_m$, then both $\sigma$ and $\tau$ are trivial.
	This identity implies the first $i-1$ and the last $n-i$ strands of $\sigma'$ have trivial braiding, so the same is true of all the strands of $\sigma$ except at most the $i$-th one; but if this had nontrivial braiding then the ``central'' $m$ strands of $\sigma'$ would cross the ``peripheral'' ones, and $\sigma' = \sigma \braket{ 1,\dc,1,m,1,\dc,1 }$ is impossible.
\end{proof}

Let now $\mc T$ be a tree with nodes $\mc T_0$, and parent-node function $\bm \phi \cl \mc T_0 \sm \set{\ast} \to \mc T_0$, as in \S~\ref{sec:type_A}.
Retain the notation for the levels $J_l \sse \mc T_0$ and the number $k_i \geq 0$ of child-nodes of $i \in \mc T_0$.

\begin{definition}
	\label{def:pure_cabled_artin_braid_group}
	The \emph{pure cabled braid group} $\ms{P\!B}(\mc T)$ of $\mc T$ is the group obtained at the end of the following sequence of applications of~\eqref{eq:artin_braid_operad_composition}:
	\begin{itemize}
		\item start at the root and set $\ms{P\!B}(\mc T)_{p+1} \ceqq \PB_1$ (the trivial group);

		\item for each level $l \in \set{p,\dc,1}$ define recursively
		      \begin{equation*}
			      \ms{P\!B}(\mc T)_l \ceqq \gamma \Bigl( \ms{P\!B}(\mc T)_{l+1} \times \prod_{J_{l+1}} \PB_{k_i} \Bigr) \sse \PB_{\, \abs{J_l}}.
		      \end{equation*}
	\end{itemize}
\end{definition}

By construction $\ms{P\!B}(\mc T) = \ms{P\!B}(\mc T)_1 \sse \PB_{\, \abs{J_1}}$ is a subgroup of the pure braid group on as many strands as the leaves of $\mc T$, and finally matching up fission/cabling trees yields the following.

\begin{theorem}
	\label{thm:wmcg_type_A_equal_cabled_braid_groups}
	In type $A$ there is a group isomorphism $\Gamma_Q \simeq \ms{P\!B}(\mc T_Q)$.
\end{theorem}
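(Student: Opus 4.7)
The plan is to identify both sides of the claimed isomorphism with the same explicit product of pure braid groups, using Thm.~\ref{thm:tree_gives_wmcg_type_A} on the left-hand side and the recursive definition of $\ms{P\!B}(\mc T_Q)$ on the right. From Thm.~\ref{thm:tree_gives_wmcg_type_A} we already know $\Gamma_Q \simeq \prod_{i \in \mc T_0} \PB_{k_i}$, so it suffices to establish the analogous identification $\ms{P\!B}(\mc T_Q) \simeq \prod_{i \in \mc T_0} \PB_{k_i}$ for the pure cabled braid group as an abstract group; comparing the two then yields the theorem.

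To this end I would argue by descending induction on the level $l \in \set{p+1,\dc,1}$, showing that $\ms{P\!B}(\mc T_Q)_l \simeq \prod_{l' > l} \prod_{i \in J_{l'}} \PB_{k_i}$. The base case $l = p+1$ is immediate, since by definition $\ms{P\!B}(\mc T_Q)_{p+1} = \PB_1$ is trivial and the indexing product is empty. For the inductive step, Lem.~\ref{lem:injective_morphism_operad_composition} ensures that the operadic composition
\begin{equation*}
	\gamma \cl \ms{P\!B}(\mc T_Q)_{l+1} \times \prod_{i \in J_{l+1}} \PB_{k_i} \lra \PB_{\abs{J_l}}
\end{equation*}
is an injective group morphism when the domain carries the direct-product structure, so its image $\ms{P\!B}(\mc T_Q)_l$ is isomorphic to that domain; combining with the inductive hypothesis furnishes the desired factorisation. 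Descending all the way to $l = 1$, and noting that $k_i = 0$ for leaves $i \in J_1$ (so their factors $\PB_0$ are trivial and can be adjoined harmlessly), we obtain $\ms{P\!B}(\mc T_Q) \simeq \prod_{i \in \mc T_0} \PB_{k_i}$, matching the description of $\Gamma_Q$.

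The main obstacle has essentially already been dealt with in Lem.~\ref{lem:injective_morphism_operad_composition}: the nontrivial point is that the various bundles of strands involved in the iterated cabling really generate an internal direct product inside the ambient pure braid group, rather than intertwining in some more complicated way, and this is precisely what injectivity (together with the homomorphism property) of $\gamma$ encodes at each stage. The remaining care is bookkeeping: one has to keep track of the total order on the child-nodes $\bm \phi^{-1}(i) \sse \mc T_0$, induced by the natural ordering of nodes at each level, so that the ordering of factors in the iterated $\gamma$-composition matches the factorwise ordering appearing in Thm.~\ref{thm:tree_gives_wmcg_type_A}. Once this identification of orderings is fixed, the recursion assembles without further subtleties.
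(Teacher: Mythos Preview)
Your proof is correct and follows essentially the same approach as the paper: both arguments identify each side with $\prod_{\mc T_0} \PB_{k_i}$ via Thm.~\ref{thm:tree_gives_wmcg_type_A} and then unwind the recursive definition of $\ms{P\!B}(\mc T_Q)$ using Lem.~\ref{lem:injective_morphism_operad_composition}. The only cosmetic difference is that the paper phrases the recursion as induction on the height $p$ of the tree (applying the hypothesis to the pruned subtree $\mc T_0' = \mc T_0 \sm J_1$), whereas you descend level by level from $l = p+1$ down to $l = 1$; these are equivalent, and your formulation has the minor advantage of keeping the $k_i$ unambiguous throughout.
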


\begin{proof}
	By induction on $p \geq 1$, the height of $\mc T_Q$.

	The base uses part of the operad unity axiom, namely the identity
	\begin{equation*}
		\gamma( \PB_1 \times \PB_k ) = \PB_k, \qquad k \in \mb Z_{\geq 1}.
	\end{equation*}

	For the inductive step, the recursive hypothesis yields
	\begin{equation*}
		\ms{P\!B}(\mc T)_2 = \prod_{\mc T_0'} \PB_{k_i} \sse \PB_{\, \abs{J_2}},
	\end{equation*}
	where $\mc T_0' \ceqq \mc T_0 \sm J_1$ are the nodes of the (sub)tree obtained by pruning the leaves; thus
	\begin{equation*}
		\ms{P\!B}(\mc T)_1 = \gamma \Biggl( \prod_{\mc T_0'} \PB_{k_i} \times \prod_{J_2} \PB_{k_i} \Biggr) \simeq \prod_{\mc T_0} \PB_{k_i} \simeq \Gamma_Q,
	\end{equation*}
	using both Lem.~\ref{lem:injective_morphism_operad_composition} and Thm.~\ref{thm:tree_gives_wmcg_type_A}.
\end{proof}

\begin{example}[Braiding of Stokes data]
	\label{ex:braiding_stokes_data}
	To showcase future applications, we will write here an explicit formula for an example of braiding of Stokes data, i.e. an action of a pure cabled braid group on a wild character variety.

	Consider again the irregular type $Q$ of~\eqref{eq:example_irregular_type}, for $\mf g = \mf{sl}_3(\mb C)$.
	Recall
	\begin{equation*}
		\bm B_Q \simeq \Set{ a,a',b,b',c \in \mb C | a \neq a', \, b \neq b' } \simeq \Conf_2(\mb C)^2 \times \mb C,
	\end{equation*}
	and the fission tree $\mc T_Q$ appears in Fig.~\ref{fig:example_fission_tree}.
	Then the pure cabled braid group $\ms{P\!B}(\mc T_Q)$, is generated by the braids of Fig.~\ref{fig:braids_to_be_cabled}.

	Now, using the description of Stokes data by level~\cite[\S~7.2]{boalch_2014_geometry_and_braiding_of_stokes_data_fission_and_wild_character_varieties}, the space of Stokes representations is identified with tuples
	\begin{equation}
		\label{eq:example_stokes_data}
		\rho = \bigl( h,B_1^1,B_3^1,B^2_1,B^2_2,B^2_3,B^2_4 \bigr) \in \SL_3(\mb C)^7, \quad \text{such that} \quad h \cdot (B_3^1B_1^1) \cdot (B^2_4 B^2_3 B^2_2 B^2_1) = 1.
	\end{equation}
	Here $h \in T$ is in the maximal torus, and the rest are unipotent elements.

	Then the explicit action of the ``level-2'' generator is
	\begin{equation}
		\label{eq:stokes_braiding_1}
		\sigma \cl \rho \lmt \bigl( h,B_1^1,B_3^1,B_3^2,B_4^2,h_1^{-1}B_1^2h_1,h_1^{-1}B_2^2h_1 \bigr), \qquad h_1 \ceqq hB_3^1B_1^1 \in \SL_3(\mb C),
	\end{equation}
	while the ``level-1'' generator acts by
	\begin{equation}
		\label{eq:stokes_braiding_2}
		\tau_1 \cl \rho \lmt \bigl( h,B_3^1,h^{-1}b_1h,b_1B_1^2b_1^{-1},b_1B_2^2b_1^{-1},b_1B_3^2b_1^{-1},b_1B_4^2b_1^{-1} \bigr), \qquad b_1 \ceqq B_1^1.
	\end{equation}
	It is straightforward to check that these actions commute---and that the (quasi) moment-map condition~\eqref{eq:example_stokes_data} is preserved.

	Further~\eqref{eq:stokes_braiding_1}--\eqref{eq:stokes_braiding_2} commute with the diagonal conjugation action of $T \sse G$ on the space of Stokes representations, hence they descend to an action on the quasi-Hamiltonian $T$-quotient, which is precisely the wild character variety $\mc M_{\on B}$.
	This is analogous to the fact that the $\PB_m$-action commutes with the diagonal $G$-action on the space of monodromy representations of a punctured sphere, thus descending to an action on the tame character variety of the introduction.
\end{example}

We conclude with a precise formulation of the multilevel/nongeneric braiding conjecture, beyond type $A$:

\begin{conjecture}[Classical pure local cabled braid groups]
	\label{conj:generalised_operads}
	There exists a 3-coloured (action) operad $\ms P$, whose evaluations on (a variation of) the generalised fission trees of Def.~\ref{def:generalised_tree} recovers all pure local WMCGs of classical type: $\ms P(\mc T_Q) \simeq \Gamma_Q$, where $Q \in \mf t \ots \ms T_{\Sigma,a}$ is an irregular type of type $A$, $B$, $C$ or $D$.\fn{
		We also expect that two colours should be enough to treat type $B/C$, according to Def.~\ref{def:bichromatic_fission_tree}, e.g. in Willwacher's ``moperads''~\cite{willwacher_2016_the_homotopy_braces_formality_morphism}---seeing the corresponding operads as modules for the type-$A$ pure braid group operad $\ms{P\!\!B}$.
		Note also Coron's relations with Orlik--Solomon algebras of hyperplane arrangements~\cite{coron_2023_matroids_feynman_categories_and_koszul_duality} should be relevant here.}
\end{conjecture}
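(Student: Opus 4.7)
The plan is to construct $\ms P$ directly out of the three families of pure braid groups classified in \S\S~\ref{sec:type_A}--\ref{sec:type_D}---namely $\PB_n$, $\PB^{BC}_n$, and the exotic $\PB^{BC,D}_{r,s}$ of~\eqref{eq:exotic_complement}---with its three colours $\mathtt a,\mathtt b,\mathtt d$ encoding these three possible ``output shapes''. Concretely I would set
\begin{equation*}
	\ms P(\mathtt a,\dc,\mathtt a;\mathtt a) \ceqq \PB_n, \qquad \ms P(\mathtt a,\dc,\mathtt a;\mathtt b) \ceqq \PB^{BC}_n, \qquad \ms P(\mathtt a,\dc,\mathtt a,\mathtt a',\dc,\mathtt a';\mathtt d) \ceqq \PB^{BC,D}_{r,s},
\end{equation*}
with an auxiliary decoration $\mathtt a'$ used to mark the $s$ ``small-green'' inputs at the exotic node, and with all other input profiles declared empty. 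The constraints of Def.~\ref{def:generalised_tree}---in particular that each node carries at most one blue child, and that the special blue node $i_0$ of Thm.~\ref{thm:tree_gives_wmcg_type_D} is unique---would then translate into the structural requirement that at most one $\mathtt b$- or $\mathtt d$-coloured operation appears along any composition path in $\ms P$, with the $\mathtt d$-colour occurring at most once in total.

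The composition at an $\mathtt a$-coloured internal node would be the classical pure braid cabling~\eqref{eq:elementary_cabling} of \S~\ref{sec:braid_operad}; at a $\mathtt b$-coloured node, its type-$B/C$ analogue, obtained from the inclusion of restricted hyperplane complements underlying Thm.~\ref{thm:restricted_arrangement_type_B}; and at the $\mathtt d$-coloured node, the cabling coming from the canonical split fibration~\eqref{eq:exotic_fibration} (and its $s > 1$ generalisation) that already appears in the proof of Prop.~\ref{thm:exotic_type}. I would then verify the action-operad axioms in the sense of~\cite[Def.~4.1.1]{yau_2019_infinity_operads_and_monoidal_categories_with_group_equivariance}, adapted to the coloured setting, by extending the ``peripheral strands have trivial braiding'' argument of Lem.~\ref{lem:injective_morphism_operad_composition} to the $\mathtt b$- and $\mathtt d$-outputs. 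The isomorphism $\ms P(\mc T_Q) \simeq \Gamma_Q$ would then follow by the same descending induction on the height of $\mc T_Q$ used in the proof of Thm.~\ref{thm:wmcg_type_A_equal_cabled_braid_groups}: grafting one factor per node---$\PB_{k_i}$ at green nodes, $\PB^{BC}_{k_i}$ at non-special blue nodes, and $\PB^{BC,D}_{r_0,s_0}$ at the special node $i_0$---reproduces the product of Thm.~\ref{thm:tree_gives_wmcg_type_D} on the nose, and degenerates correctly to Thmm.~\ref{thm:tree_gives_wmcg_type_A} and~\ref{thm:tree_gives_wmcg_type_BC} in the subcases when no $\mathtt d$- or $\mathtt b$-coloured node occurs.

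The hard part will be the $\mathtt d$-coloured composition and its coherence with the other two. Since the groups $\PB^{BC,D}_{r,s}$ are not pure braid groups of any crystallographic arrangement (cf. the remark preceding Thm.~\ref{thm:restricted_arrangement_type_D}), there is no Weyl-equivariant picture of ``strands in a configuration space'' on which to cable, and the composition must instead be constructed fibrewise from~\eqref{eq:exotic_fibration}, in a way that must be checked to be genuinely associative with respect to cablings into its $\mathtt a$- and $\mathtt a'$-coloured inputs. Ensuring that the resulting object is really a 3-coloured action operad---rather than a bichromatic (action) operad carrying a module in the spirit of Willwacher's moperads~\cite{willwacher_2016_the_homotopy_braces_formality_morphism}, as hinted at in the footnote to the conjecture---is to my mind the principal subtlety; moreover, this associativity constraint is precisely what should force the single-appearance rule for the $\mathtt d$-colour, matching the uniqueness of $i_0$ in Thm.~\ref{thm:tree_gives_wmcg_type_D}.
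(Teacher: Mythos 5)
The statement you are addressing is stated in the paper explicitly as a \emph{conjecture} (Conj.~\ref{conj:generalised_operads}); the paper offers no proof of it, so there is no proof to compare against. What you have written is a plan of attack, not a proof, and by your own admission the decisive steps are left open. Concretely, you never construct the $\mathtt d$-coloured composition map, never verify the associativity/unit/equivariance axioms for any of the three colours beyond gesturing at Lem.~\ref{lem:injective_morphism_operad_composition}, and you explicitly flag the $\mathtt d$-coherence as the ``principal subtlety'' that ``must be checked''. A proof of the conjecture would have to actually carry out exactly those steps.

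There are also two tensions internal to the sketch worth naming. First, you introduce a fourth symbol $\mathtt a'$ to distinguish the small-green inputs at the exotic node; as stated this is no longer a 3-coloured operad, and you would have to either absorb $\mathtt a'$ into extra decoration of the trees (which is perhaps the intended meaning of ``a variation of the generalised fission trees'', but needs to be made precise) or genuinely pass to 4 colours. Second, your proposal to make the $\mathtt d$-composition ``fibrewise from~\eqref{eq:exotic_fibration}'' runs into the obstruction you yourself identify: $X_{r,s}$ is not the configuration space of any root system, so there is no intrinsic notion of ``strand'' to cable, and one would need an actual configuration-space model of $X_{r,s}$ (or a purely algebraic semidirect-product description beyond Prop.~\ref{thm:exotic_type}, which handles only $s=1$) on which a well-defined operadic insertion can be performed and shown associative against both $\mathtt a$- and $\mathtt a'$-inputs. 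Until those constructions are supplied and the axioms checked, the target isomorphism $\ms P(\mc T_Q)\simeq\Gamma_Q$ cannot be deduced; the inductive argument of Thm.~\ref{thm:wmcg_type_A_equal_cabled_braid_groups} only transfers once the operad it quantifies over actually exists. In short: the strategy is a reasonable and fairly natural one (and consistent in spirit with the footnote's moperad suggestion for type $B/C$), but it is an outline of where a proof might go, not a proof, and the conjecture remains open.
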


\section{Outlook}
\label{sec:outlook}

There is a \emph{full/nonpure} version of local WMCGs, which involves taking out the Weyl action on irregular types, leading to the notion of a ``bare'' irregular type~\cite[Rk.~10.6]{boalch_2014_geometry_and_braiding_of_stokes_data_fission_and_wild_character_varieties} (a.k.a. an ``irregular class''); and moreover one can define \emph{twisted} irregular types/classes~\cite{boalch_yamakawa_2015_twisted_wild_character_varieties}, leading to ``twisted'' (dressed/bare) wild Riemann surfaces.
A diagram-theoretic description of twisted irregular classes for $G = \GL_n(\mb C)$ was given in~\cite{boalch_yamakawa_2019_diagrams_for_nonabelian_hodge_spaces_on_the_affine_line}, and more generally in~\cite{doucot_2021_diagrams_and_irregular_connections_on_the_riemann_sphere}: their admissible deformations will be considered elsewhere.\fn{
	The general full/nonpure untwisted case is now studied in~\cite{doucot_rembado_2023_topology_of_irregular_isomonodromy_times_on_a_fixed_pointed_curve}, while the type-$A$ twisted case (both pure and full) is the subject of~\cite{boalch_doucot_rembado_2022_twisted_local_wild_mapping_class_groups_configuration_spaces_fission_trees_and_complex_braids},
recently extended to any structure group $G$ in~\cite{doucout_rembado_yamakawa_twisted_g_local_wild_mapping_class_groups}.}

Further the admissible deformations of wild Riemann surfaces allow for varying the underlying pointed Riemann surface, as in Def.~\ref{def:admissible_deformations}, and we plan to study the topology of the relevant (universal) ``global'' deformation spaces.\fn{
	This has now been taken on in~\cite{doucot_rembado_tamiozzo_2022_local_wild_mapping_class_groups_and_cabled_braids}.}

\appendix

\section{Some notions/notations we use}
\label{sec:notions_notations}

We collect here some standard material used throughout the paper, also fixing notation.
Besides Bourbaki, see e.g.~\cite[\S~2]{collingwood_mcgovern_1993_nilpotent_orbits_in_semisimple_lie_algebras} and~\cite{humphreys_1972_introduction_to_lie_algebras_and_representation_theory}.

\subsection*{About Lie algebras}

Let $\mf g$ be a finite-dimensional reductive Lie algebra.

The \emph{centraliser} of a subset $S \sse \mf g$ is
\begin{equation*}
	\mf Z_{\mf g}(S) = \Set{ X \in \mf g | [X,S] = 0 } \sse \mf g,
\end{equation*}
and in particular $\mf Z_{\mf g} = \mf Z_{\mf g}(\mf g)$ is the centre.
There is a Lie algebra decomposition $\mf g = \mf g' \ops \mf Z_{\mf g}$, where $\mf g' = [\mf g,\mf g]$ is the \emph{semisimple part} of $\mf g$.

A \emph{Cartan subalgebra} $\mf t \sse \mf g$ is a maximal abelian subalgebra consisting of semisimple elements, so $\mf Z_{\mf g} \sse \mf t$.
The Cartan subalgebra decomposes as $\mf t = \mf t' \ops \mf Z_{\mf g}$, where $\mf t' = \mf t \cap \mf g'$, which is a Cartan subalgebra of $\mf g'$.
Conversely the Cartan subalgebras \emph{split} $\mf g$ and $\mf g'$, and the pairs $(\mf g,\mf t)$ and $(\mf g',\mf t')$ are \emph{split} Lie algebras.

The \emph{rank} of $\mf g$ is the dimension of a Cartan subalgebra, so in particular
\begin{equation*}
	\rk(\mf g) = \dim(\mf Z_{\mf g}) + \rk(\mf g'),
\end{equation*}
while $\rk(\mf g')$ is the \emph{semisimple rank} of $\mf g$.

Given a (reductive) split Lie algebra $(\mf g,\mf t)$, a \emph{root} is a linear functional $\alpha \in \mf t^{\dual} \sm \set{0}$ such that the following subspace is nonzero:
\begin{equation*}
	\mf g_{\alpha} \ceqq \Set{ X \in \mf g | [A,X] = \braket{ \alpha | A } X \text{ for } A \in \mf t } \sse \mf g.
\end{equation*}
The (finite) set of roots is denoted $\Phi_{\mf g} = \Phi(\mf g,\mf t) \sse \mf t^{\dual}$, and is called the \emph{root system} of $(\mf g,\mf t)$.
Note all roots vanish on the centre, so the root system does \emph{not} span $\mf t^{\dual}$ in the nonsemisimple case.
Conversely, if $\Phi_{\mf g'} \sse (\mf t')^{\dual}$ is the (spanning) root system of the semisimple part $\mf g' \sse \mf g$, then its elements are precisely the restriction (to $\mf t'$) of the elements of $\Phi_{\mf g}$.

\subsection*{About root systems}

Hence we will consider root systems $\Phi \sse V$, where $V$ is a finite-dimensional vector space, which are \emph{crystallographic} (= they have integer Cartan numbers), but not necessarily irreducible or spanning.
The subspace $\spann_{\mb C}(\Phi) \sse V$ is the \emph{essential} part of $(V,\Phi)$,\fn{
	On this space the root-hyperplane arrangement is ``essential''~\cite[\S~2.B]{broue_malle_rouquier_1998_complex_reflection_groups_braid_groups_hacke_algebras}.}
and the \emph{rank} of $\Phi$ is the dimension of the essential part.
(Beware this coincides with the \emph{semisimple} rank of the associated Lie algebra.)

A \emph{root subsystem} $\Phi' \sse \Phi$ is a subset which is preserved by the reflections associated with the roots it contains.
Such subsystems are permuted by automorphisms of $\Phi$, so in particular by the Weyl group, cf.~\cite{oshima_2006_a_classification_of_subsystems_of_a_root_system}.

A root subsystem $\Phi' \sse \Phi$ is \emph{Levi} if it closed under $\mb Q$-linear combinations of its elements, provided these are still roots: in the case of a split reductive Lie algebra $(\mf g,\mf t)$, it is equivalent to ask that $\Phi'$ is the annihilator of an element $X \in \mf t$---intersected with $\Phi$.

The \emph{direct sum} of two root systems $(\Phi_1,V_1), (\Phi_2,V_2)$ is
\begin{equation}
	\label{eq:direct_sum_root_systems}
	\Phi_1 \ops \Phi_2 = (V_1,\Phi_1) \ops (V_2,\Phi_2) \ceqq (V_1 \ops V_2,\Phi_1 \mathsmaller{\coprod} \Phi_2).
\end{equation}

We will also encounter \emph{nonreduced} root systems.
There exists a unique (spanning) irreducible nonreduced rank-$n$ root system, up to isomorphism, denoted $BC_n$: it consists of the vectors
\begin{equation*}
	\Set{ \pm(e_i - e_j), \, \pm (e_i + e_j) | 1 \leq i \neq j \leq n } \cup \Set{e_i, \, 2e_i | 1 \leq i \leq n } \sse V = \mb C^n,
\end{equation*}
using the canonical basis of $\mb C^n = \bops_i \mb Ce_i$~\cite[Ch.~VI, \S~4.14]{bourbaki_1968_groupes_et_algebres_de_lie_chapitres_4_6}.

\subsection*{About braid groups and hyperplane arrangements/complements}

We denote $\PB_n$ the \emph{pure braid group} on $n \geq 0$ strands~\cite{artin_1925_theorie_der_zoepfe}. (So $\PB_0$ and $\PB_1$ are trivial.)
It is the fundamental group of the space
\begin{equation}
	\label{eq:type_A_complement}
	\Conf_n(\mb C) = \mb C^n \, \mathbin{\big\backslash} \, \bigcup_{1 \leq i \neq j \leq n} H_{ij}, \qquad H_{ij} \ceqq \Set{ (z_1,\dc,z_n) \in \mb C^n \mid z_i = z_j } \sse \mb C^n,
\end{equation}
i.e. the space of configurations of (ordered) $n$-tuples points in the complex plane~\cite{fadell_neuwirth_1962_configuration_spaces}.
These are thus the fundamentals group of complements of hyperplane arrangements, i.e. ``hyperplane complements'' for short.

More generally for a (reductive) split Lie algebra $(\mf g,\mf t)$ we consider the \emph{pure} $\mf g$-\emph{braid group} $\PB_{\mf g}$, which is the fundamental group of the space
\begin{equation}
	\label{eq:regular_cartan}
	\mf t_{\reg} = \mf t \, \mathbin{\big\backslash} \,  \bigcup_{\Phi_{\mf g}} \Ker(\alpha) \sse \mf t,
\end{equation}
viz. the complement of the root-hyperplane arrangement---the ``root-hyperplane complement''~\cite{brieskorn_1971_die_fundamentalgruppe_des_raumes_der_regulaeren_orbits_einer_endlichen_komplexen_spiegelungsgruppe,brieskorn_1973_sur_les_groupes_de_tresses,deligne_1972_les_immeubles_des_groupes_de_tresses_generalises}.
Such arrangements are said to be \emph{crystallographic}, and in particular~\eqref{eq:type_A_complement} corresponds to a simple Lie algebra of type $A_{n-1}$.

In the case of simple Lie algebras of type $B_n/C_n$ (resp. $D_n$) we will denote $\PB^{BC}_n$ the pure $\mf g$-braid group (resp. $\PB^D_n$).
(Note types $B_n$ and $C_n$ yield the same complement~\eqref{eq:regular_cartan}.)

The Weyl groups are the reflection groups generated by the root-hyperplane arrangements.
Conversely a reflection group is \emph{crystallographic} if it is the Weyl group of a root system~\cite[Ch.~VI, \S~2.5]{bourbaki_1968_groupes_et_algebres_de_lie_chapitres_4_6}.

\section{Some remarks about quantisation}
\label{sec:quantisation}

For completeness we review here some of the literature about the quantum analogue of the background material.
While we do not need/use in this paper, it inspires part of this work.

\vspace{5pt}

The main idea is that the nonlinear monodromy actions of mapping class/braid groups on wild character varieties have linear analogues obtained after \emph{quantisation}.
In turn, this means considering the Poisson varieties as the phase-spaces of classical mechanical systems (parameterising pure states), or as spaces of classical gauge fields, and replace them by their analogue in quantum mechanics/field theory (see e.g.~\cite[App.~A]{rembado_2018_quantisation_of_moduli_spaces_and_connections} and~\cite{faddeev_yakubovski_2009_lectures_on_quantum_mechanics_for_mathematics_students} for the basic mathematical dictionary).

Several constructions are possible, crossing the boundary between mathematics and theoretical physics: rigorous mathematical approaches include \emph{geometric} quantisation, born out of the work of Kirillov, Konstant and Souriau on coadjoint orbits~\cite{kirillov_1962_unitary_representations_of_nilpotent_lie_groups,konstant_1970_quantisation_and_unitary_representations_i_prequantization,souriau_1970_structure_des_systemes_dynamiques}, and \emph{deformation} quantisation~\cite{bayen_flato_fronsdal_lichnerowicz_sternheimer_1978_deformation_theory_and_quantisation_i_deformations_of_symplectic_structures,bayen_flato_fronsdal_lichnerowicz_sternheimer_1978_deformation_theory_and_quantisation_ii_physical_applications}---which concentrates on the quantisation of observables.
In any sensible formalism, quantisation replaces a Poisson/symplectic fibre bundle by a family of vector spaces: the mathematician's aim is to prove these assemble into a vector bundle, and equip it with a flat (projective) connection, whose monodromy finally provides (projective) ``quantum'' representations of the fundamental group of the base.

In this framework we notably find the Knizhnik--Zamolodchikov connection (KZ)~\cite{knizhnik_zamolodchikov_1984_current_algebra_and_wess_zumino_model_in_two_dimensions,etingof_frenkel_kirillov_1998_lectures_on_representation_theory_and_knizhnik_zamolodchikov_equations}, in the genus-zero Wess--Zumino--Novikov--Witten model (WZNW)~\cite{wess_zumino_1971_consequences_of_anomalous_ward_identities,novikov_1982_the_hamiltonian_formalism_and_a_multivalued_analogue_of_morse_theory,witten_1983_global_aspects_of_current_algebra,witten_1984_nonabelian_bosonization_in_two_dimensions} for 2d conformal field theory~\cite{belavin_polyakov_zamolodchikov_1984_infinite_conformal_symmetry_in_two_dimensional_quantum_field_theory,segal_1988_the_definition_of_conformal_field_theory}, quantising the Schlesinger system~\cite{reshetikhin_1992_the_knizhnik_zamolodchikov_system_as_a_deformation_of_the_isomonodromy_problem,harnad_1996_quantum_isomonodromic_deformations_and_the_knizhnik_zamolodchikov_equations}.
But also the connection of Felder--Markov--Tarasov--Varchenko (FMTV)~\cite{felder_markov_tarasov_varchenko_2000_differential_equations_compatible_with_kz_equations}, quantising the system of JMMS~\cite{rembado_2019_simply_laced_quantum_connections_generalising_kz}.

Note the dual version of the Schlesinger system, which is a particular case of JMMS, was quantised earlier~\cite{boalch_2002_g_bundles_isomonodromy_and_quantum_weyl_groups}, recovering the ``Casimir'' connection of De Concini/Millson--Toledano Laredo (DMT)~\cite{millson_toledanolaredo_2005_casimir_operators_and_monodromy_representations_of_generalised_braid_groups}.
(Cf. the introduction of~\cite{rembado_2019_simply_laced_quantum_connections_generalising_kz}.)

The monodromy of KZ then features in the Kohno--Drinfel'd theorem~\cite{kohno_1987_monodromy_representation_of_braid_groups_and_yang_baxter_equations,drinfeld_1989_quasi_hopf_algebras}.
Analogously, the monodromy of DMT is tantamount to a Kohno--Drinfel'd theorem for $q$-Weyl groups~\cite{toledanolaredo_2002_a_kohno_drinfeld_theorem_for_quantum_weyl_groups}, and recovers the action of Lusztig/Soibelman/Kirillov--Reshetikhin~\cite{lusztig_1990_quantum_groups_at_roots_of_1,soibelman_1990_algebra_of_functions_on_a_compact_quantum_group_and_its_representations,kirillov_reshetikhin_1990_q_weyl_group_and_a_multiplicative_formula_for_universal_r_matrices} on the Jimbo--Drinfel'd quantum group $U_q \mf g$~\cite{jimbo_1985_a_q_difference_analogue_of_u_g_and_the_yang_baxter_equation,drinfeld_1987_quantum_groups}.
This Hopf algebras quantises the algebra of functions on $G^*$: as mentioned above, this is precisely the wild character variety in this example, and the semiclassical limit of the action coincides with that of De Concini--Kac--Procesi.
(The monodromy of FMTV was considered in~\cite{xu_2020_stokes_phenomenon_and_yang_baxter_equations}.)

These examples are in the regular singular case, and in the generic irregular singular case:\fn{
	We said the high-genus nonsingular case also fits this story: indeed one finds the connection of Witten~\cite{witten_1991_quantization_of_chern_simons_gauge_theory_with_complex_gauge_group,andersen_gammelgaard_2014_the_hitchin_witten_connection_and_complex_quantum_chern_simons_theory} in \emph{complex} quantum Chern--Simons gauge theory, which is equivalent to a ``complexified'' Hitchin connection in genus one~\cite{andersen_malusa_rembado_2020_genus_one_complex_quantum_chern_simons_theory}.
	The original Hitchin connection~\cite{hitchin_1990_flat_connections_and_geometric_quantization,axelrod_dellapietra_witten_1991_geometric_quantisation_of_chern_simons_gauge_theory} is for \emph{compact} Chern--Simons, so it requires starting from a (maximal) compact subgroup $K \sse G$; nonetheless it yields ``quantum'' representations of mapping class groups, cf.~e.g.~\cite{masbaum_2003_quantum_representations_of_mapping_class_groups,andersen_2006_asymptotic_faithfulness_of_the_quantum_su_n_representations_of_the_mapping_class_group,marche_2018_introduction_to_quantum_representations_of_mapping_class_groups}.

	Note an identification of the projectively flat vector bundles of nonabelian $\theta$-functions and WZWN conformal blocks was first given in~\cite{beauville_laszlo_1994_conformal_blocks_and_generalized_theta_functions,faltings_1994_a_proof_for_the_verlinde_formula,laszlo_1998_hitchin_s_and_wzw_connections_are_the_same}, in absence of marked points---thus missing KZ.
	Recently this has been extended to the case of marked points, see~\cite{egsgaard_2015_hitchin_connection_for_genus_zero_quantum_representation,biswas_mukhopadhyay_wentworth_2021_ginzburg_algebras_and_the_hitchin_connection_for_parabolic_g_bundles,biswas_mukhopadhyay_wentworth_2021_geometrization_of_the_tuy_wzw_kz_connection} and references therein.}
the main result of~\cite{rembado_2019_simply_laced_quantum_connections_generalising_kz,rembado_2020_symmetries_of_the_simply_laced_quantum_connections_and_quantisation_of_quiver_varieties,yamakawa_2022_quantization_of_simply_laced_isomonodromy_systems_by_the_quantum_spectral_curve_method} is that one can also do quantisation in the nongeneric case, namely quantising the system of~\cite{boalch_2012_simply_laced_isomonodromy_systems}, generalising all the above.
(Cf.~\cite{calaque_felder_rembado_wentworth_2024_wild_orbits_and_generalised_singularity_modules} for an extension to arbitrary polar divisors/structure groups.)

In all these important cases one sees the local wild moduli, viz. the irregular types, behave as the moduli of the underlying pointed surface even after quantisation.
But the ``semiclassical'' theory of isomonodromic deformations goes beyond these examples, and it is still expressed in a language that lends itself to quantisation, so it provides a guide to prove analogous statements in a (much) more general context.

\bibliographystyle{amsplain}
\bibliography{bibliography}
\end{document}